\documentclass[11pt]{amsart}
\usepackage{amsmath, amsthm, amssymb}
\usepackage[margin=2cm]{geometry}
\usepackage{epsfig}
\usepackage{graphicx}
\usepackage{rawfonts}
\usepackage{enumerate}
\usepackage{amssymb}
\usepackage{mathtools}
\usepackage{float} 
\usepackage{graphicx}
\usepackage{pgf,tikz,pgfplots}
\usepackage{mathrsfs}
\usetikzlibrary{arrows}
\usepackage{amsmath}
\usepackage{amsfonts}
\usepackage{amssymb}
\usepackage{amsthm}
\usepackage{graphicx}
\usepackage{booktabs}
\usepackage{caption}
\usepackage{listings}
\usepackage{setspace}
\usepackage[mathscr]{eucal}
\usepackage{pgfplots}
\usepackage{hyperref}
\usepackage{wrapfig}
\usepackage{floatflt,epsfig}
\usepackage{ dsfont }
\usepackage{amscd}
\usepackage{tikz-cd}
\usepackage{fancyhdr}
\usepackage[all]{xy}
\usepackage{latexsym}
\usepackage{amscd}
\usepackage{pifont}
\usepackage{eufrak}
\usepackage{subfig}
\usepackage{easyReview}

\let\to=\rightarrow

\def\Implies{\ifmmode\Longrightarrow \else
	\unskip${}\Longrightarrow{}$\ignorespaces\fi}
\def\implies{\ifmmode\Rightarrow \else
	\unskip${}\Rightarrow{}$\ignorespaces\fi}
\def\iff{\ifmmode\Longleftrightarrow \else
	\unskip${}\Longleftrightarrow{}$\ignorespaces\fi}

\let\:=\colon

\sloppy

\newcommand{\rank}{\mathop{\rm rank}\nolimits}

\newcommand{\numberset}{\mathbb}

\newcommand{\Z}{\numberset{Z}}

\def\NN{{\mathbb N}}
\def\ZZ{{\mathbb Z}}

\newcommand{\lt}{\mathop{\rm in}\nolimits}

\newcommand{\cA}{\mathcal{A}}
\newcommand{\cC}{\mathcal{C}}

\newcommand{\cP}{\mathcal{P}}
\newcommand{\cH}{\mathcal{H}}
\newcommand{\cI}{\mathcal{I}}

\newcommand{\cB}{\mathcal{B}}
\newcommand{\cM}{\mathcal{M}}
\newcommand{\cF}{\mathcal{F}}
\newcommand{\cQ}{\mathcal{Q}}
\newcommand{\cW}{\mathcal{W}}
\newcommand{\cR}{\mathcal{R}}
\newcommand{\cS}{\mathcal{S}}

\newcommand{\cL}{\mathcal{L}}
\newcommand{\cV}{\mathcal{V}}
\newcommand{\cG}{\mathcal{G}}

\newcommand{\bigslant}[2]{{\raisebox{.2em}{$#1$}\left/\raisebox{-.2em}{$#2$}\right.}}

\theoremstyle{plain}

\theoremstyle{theorem}

\newtheorem{defn}{Definition}[section]
\newtheorem{prop}[defn]{Proposition}
\newtheorem{thm}[defn]{Theorem}
\newtheorem{lemma}[defn]{Lemma}
\newtheorem{conj}[defn]{Conjecture}
\newtheorem{coro}[defn]{Corollary}
\newtheorem{exa}[defn]{Example}
\newtheorem{rmk}[defn]{Remark}

\newtheorem{discussion}[defn]{Discussion}

\theoremstyle{remark}

\begin{document}
	
	\title[NON-SIMPLE POLYOMINOES OF K\H{O}NIG TYPE and their canonical module]{NON-SIMPLE POLYOMINOES OF K\H{O}NIG TYPE and their canonical module}
	
	\author{RODICA DINU}
	\address{
		University of Konstanz, Fachbereich Mathematik und Statistik, Fach D 197 D-78457 Konstanz, Germany, and Institute of Mathematics Simion Stoilow of the Romanian Academy, Calea Grivitei 21, 010702, Bucharest, Romania}
	\email{rodica.dinu@uni-konstanz.de}

	\author{FRANCESCO NAVARRA}
	\address{Sabanci University, Faculty of Engineering and Natural Sciences, Orta Mahalle, Tuzla 34956, Istanbul, Turkey}
	\email{francesco.navarra@sabanciuniv.edu}

	\keywords{Polyominoes, binomial ideals, Krull dimension, K\H{o}nig type, canonical module}
	
	\subjclass[2010]{05B50, 05E40}

	\begin{abstract}
		We study the K\H{o}nig type property for non-simple polyominoes. We prove that, for closed path polyominoes, the polyomino ideals are of K\H{o}nig type, extending the results of Herzog and Hibi for simple thin polyominoes. As an application of this result, we give a combinatorial interpretation for the canonical module of the coordinate ring of a sub-class of closed paths, namely circle closed path polyominoes. In this case, we compute also the Cohen-Macaulay type and we show that the coordinate ring is level. 
	\end{abstract}

	\maketitle
	
	\section{Introduction}
	\noindent Polyominoes are plane figures obtained by joining unit squares along their edges. They raise many combinatorial problems, such as tiling a given region or even the entire plane with polyominoes, which are of interest to mathematicians and computer scientists. Even though problems like, for example, the enumeration of pentominoes, have their origins in antiquity, polyominoes were formally defined by Golomb first in 1953 and later, in 1996, in his monograph~\cite{golomb}. The study of polyominoes reveals many connections to different subjects. For instance, in algebraic languages there is a nice relation between polyominoes and Dyck words and Motzkin words \cite{delest}. In statistical physics, polyominoes (and their higher-dimensional analogs known in the literature as lattice animals) appear as models of branched polymers and of percolation clusters \cite{animals}.\\
 \noindent A classic topic in commutative algebra is the study of determinantal ideals. These are the ideals generated by the $t$-minors of a matrix whose entries are indeterminates; see for instance \cite{Bruns_Herzog} and \cite{bernd}. More generally, ideals of $t$-minors of 2-sided ladders have been studied in \cite{conca1}, \cite{conca2}, \cite{conca3}, \cite{gorla}. The concept of polyomino ideal,  alternatively known as the ideal generated by the so-called inner $2$-minors of a $m \times n$ matrix, widely generalize the theory of determinantal ideals. Specifically, if $\cP$ is a polyomino, then the \textit{polyomino ideal} $I_{\cP}$ of $\cP$ is defined as the ideal generated by those sets of 2-minors of the matrix that can be combinatorially associated with the polyomino
 $\cP$. This type of ideal was introduced in 2012 by Qureshi~\cite{Qureshi}. Since then, the study of the main algebraic properties of polyomino ideals and their quotient rings $K[\cP]$, in terms of the shape of $\cP$, has emerged as an exciting area of research. For instance, several mathematicians have studied the primality of $I_\cP$, see \cite{Cisto_Navarra_closed_path}, \cite{Cisto_Navarra_weakly}, \cite{Cisto_Navarra_CM_closed_path}, \cite{def balanced}, \cite{Not simple with localization}, \cite{Trento}, \cite{Trento2}, \cite{Shikama}. Moreover, in \cite{Simple equivalent balanced} and \cite{Simple are prime}, the authors showed that $K[\cP]$ is a normal Cohen-Macaulay domain if $\cP$ is a simple polyomino, i.e. a polyomino without holes. In \cite{Def. Konig type} the authors introduced graded ideals of K\H{o}nig type with respect to a monomial order $<$. These ideals $I$ are characterized by the existence of a sequence, whose length is the same as the height of $I$, of homogeneous polynomials forming part of a minimal system of generators of $I$ and of a monomial order $<$ with respect to whom their initial monomials form a regular sequence. The authors presented interesting consequences that may occur when working with a graded ideal with this property. Moreover, in \cite{Hibi - Herzog Konig type polyomino}, Herzog and Hibi showed that if $\cP$ is a simple thin polyomino, where thin means that the polyomino does not contain the square tetromino, then its polyomino ideal has the K\H{o}nig type property.\\ 
  We are interested in understanding the K\H{o}nig type property for non-simple polyominoes, following the path initiated by Herzog and Hibi. We will focus on a specific class of non-simple thin polyominoes, namely closed path polyominoes. In \cite[Example 5.6]{Hibi - Herzog Konig type polyomino}, the authors provided an example of a closed path whose polyomino ideal is of K\H{o}nig type with respect to a particular monomial order. This motivated us to study the K\H{o}nig type property for the closed paths. However, the monomial order suggested by the authors is different from the ones proposed in this paper. Several interesting results on the class of closed path polyominoes can be found in \cite{Cisto_Navarra_closed_path}, \cite{Cisto_Navarra_Rizwan}, \cite{Cisto_Navarra_CM_closed_path}, \cite{Cisto_Navarra_Hilbert_series} and \cite{Cisto_Navarra_Veer}. Since the first draft of this work was posted on arxiv in October 2022, two interesting results have been obtained on polyomino ideals of closed paths. In \cite{Cisto_Navarra_Veer} it is given a complete description of the minimal primes of the polyomino ideal of a closed path and in \cite{Cisto_Navarra_Rizwan} it is proved that the coordinate ring of a closed path is always Cohen-Macaulay.  \\
 \noindent However, not all polyomino ideals are of K\H{o}nig type and there is no known classification of the polyominoes that have this property. In particular, parallelogram polyominoes give a class of simple polyominoes for which this property does not hold. Indeed, this follows by \cite[Proposition 2.3]{Parallelogram Hilbert series}, where the authors showed that parallelogram polyominoes are simple planar distributive lattices, and by using the classification of distributive lattices of K\H{o}nig type provided in \cite[Theorem 4.1]{Hibi - Herzog Konig type polyomino}. In addition, we found an example of a non-simple thin polyomino that cannot be of K\H{o}nig type (see Remark~\ref{grid_poly}).
 
  The paper is organized as follows. In Section~\ref{Section: Introduction}, we present a detailed introduction to polyominoes and polyomino ideals, and in Lemma~\ref{Lemma: Closed path number of vertices and cells} we prove that, if $\cP$ is a closed path polyomino, then its number of vertices is twice the number of its cells, a fact that will be useful in the next sections. In order to study closed path polyominoes of K\H{o}nig type, a combinatorial formula to compute the height of $I_{\cP}$ is needed. Section~\ref{Krull} is devoted to this scope. In Theorem~\ref{Thm: Dimension closed path}, we give a combinatorial formula for the Krull dimension of $K[\cP]$ and prove it using the simplicial complexes theory. The fact that $\cP$ contains some specific configurations as shown in \cite[Section 6]{Cisto_Navarra_closed_path} plays a crucial role in our proof. Consequently, in Corollary~\ref{Coro: height of P}, we prove that the height of $I_{\cP}$ is the number of cells of the closed path polyomino. We conjecture that this formula holds for any non-simple polyomino. Furthermore, the polyominoes discussed in \cite{Navarra_Dinu_grid} and \cite{Frame} affirmatively support this conjecture. Additionally, in \cite[Theorem 1.1]{Moradi}, an intriguing sufficient condition for this conjecture is provided. Section~\ref{Konig} is devoted to the proof of the K\H{o}nig type property of any closed path polyomino (see Theorem~\ref{konigfinal}). Unfortunately, the monomial order used to prove Theorem \ref{Thm: Dimension closed path} does not guarantee the K\H{o}nig type property for all closed paths as shown in Remark \ref{grid_poly_2}. In Definition~\ref{Procedure: to define Y}, we define a suitable order on the vertices of the closed path polyomino $\cP$ for whom the desired property holds. There are two cases to be examined: either $\cP$ contains a configuration of four cells (treated in Proposition~\ref{Lemma: A closed path with a tetromino is Konig type}) or $\cP$ has an $L$-configuration in every change of direction (treated in Proposition~\ref{Lemma: A closed path with a L-conf is Konig type}). In addition, we present a concrete example to illustrate our procedure. In Section~\ref{canonical}, we study the canonical module of the coordinate ring for a sub-class of closed path polyominoes, called \textit{circle closed path polyominoes} (see Definition~\ref{circle}). Actually, the canonical module of coordinate rings of polyominoes has never been studied; just recently, in \cite{Level}, the authors have studied the levelness property for the special class of simple paths. In our case, we show that the canonical module can be obtained from two ideals: a binomial ideal $J(\cP)$, which is given by the K\H{o}nig type property (see Proposition~\ref{Lemma: A closed path with a L-conf is Konig type}), and a monomial ideal $K(\cP)$, which is intimately related to the combinatorics of the polyomino. The binomial ideal coming from the K\H{o}nig type property will play an important role: $J(\cP)\subset I_{\cP}$ is a complete intersection ideal, radical and it has the same height as the polyomino ideal associated to a closed path, by Lemma~\ref{Lemma: property of J(P) compared to I_P}. These properties allow us to use a result from linkage theory, Proposition~\ref{prop5.5}, which was first observed in \cite{P_S_Canonical_module}. To compute the colon ideal $J(\cP): I_{\cP}$, we use another result, namely Proposition~\ref{prop5.6}, determining all the minimal prime ideals of $J(\cP)$. Finally we give our main result from this section (Theorem~\ref{Thm: description canonical module}), where we determine explicitly the canonical module of $K[\cP]$ for any circle closed path polyomino $\cP$. As a consequence, we compute the Cohen-Macaulay type of $K[\cP]$ in Corollary~\ref{CMtype}, and we show that $K[\cP]$ is a level ring. 
	
	\section{Basics on polyominoes and polyomino ideals}\label{Section: Introduction}
	
	\noindent Let $(i,j),(k,l)\in \Z^2$. We say that $(i,j)\leq(k,l)$ if $i\leq k$ and $j\leq l$. Consider $a=(i,j)$ and $b=(k,l)$ in $\Z^2$ with $a\leq b$. The set $[a,b]=\{(m,n)\in \Z^2: i\leq m\leq k,\ j\leq n\leq l \}$ is called an \textit{interval} of $\Z^2$. 
	Moreover, if $i< k$ and $j<l$, then $[a,b]$ is a \textit{proper} interval. In this case, we say $a$ and $b$ are the \textit{diagonal corners} of $[a,b]$, and $c=(i,l)$ and $d=(k,j)$ are the \textit{anti-diagonal corners} of $[a,b]$. If $j=l$ (or $i=k$), then $a$ and $b$ are in \textit{horizontal} (or \textit{vertical}) \textit{position}. We denote by $]a,b[$ the set $\{(m,n)\in \Z^2: i< m< k,\ j< n< l\}$. A proper interval $C=[a,b]$ with $b=a+(1,1)$ is called a \textit{cell} of $\ZZ^2$; moreover, the elements $a$, $b$, $c$ and $d$ are called respectively the \textit{lower left}, \textit{upper right}, \textit{upper left} and \textit{lower right} \textit{corners} of $C$. The set of vertices of $C$ is $V(C)=\{a,b,c,d\}$ and the set of edges of $C$ is $E(C)=\{\{a,c\},\{c,b\},\{b,d\},\{a,d\}\}$. Let $\cS$ be a non-empty collection of cells in $\Z^2$. Then $V(\cS)=\bigcup_{C\in \cS}V(C)$ and $E(\cS)=\bigcup_{C\in \cS}E(C)$, while the rank of $\cS$ is the number of cells belonging to $\cS$. If $C$ and $D$ are two distinct cells of $\cS$, then a \textit{walk} from $C$ to $D$ in $\cS$ is a sequence $\cC:C=C_1,\dots,C_m=D$ of cells of $\ZZ^2$ such that $C_i \cap C_{i+1}$ is an edge of $C_i$ and $C_{i+1}$ for $i=1,\dots,m-1$. Moreover, if $C_i \neq C_j$ for all $i\neq j$, then $\cC$ is called a \textit{path} from $C$ to $D$. Moreover, if we denote by $(a_i,b_i)$ the lower left corner of $C_i$ for all $i=1,\dots,m$, then $\cC$ has a \textit{change of direction} at $C_k$ for some $2\leq k \leq m-1$ if $a_{k-1} \neq a_{k+1}$ and $b_{k-1} \neq b_{k+1}$. In addition, a path can change direction in one of the following ways:
	\begin{enumerate}
		\item North, if $(a_{i+1}-a_i,b_{i+1}-b_i)=(0,1)$ for some $i=1,\dots,m-1$;
		\item South, if $(a_{i+1}-a_i,b_{i+1}-b_i)=(0,-1)$ for some $i=1,\dots,m-1$;
		\item East, if $(a_{i+1}-a_i,b_{i+1}-b_i)=(1,0)$ for some $i=1,\dots,m-1$;
		\item West, if $(a_{i+1}-a_i,b_{i+1}-b_i)=(-1,0)$ for some $i=1,\dots,m-1$.
	\end{enumerate}
 
    \noindent We say that $C$ and $D$ are \textit{connected} in $\cS$ if there exists a path of cells in $\cS$ from $C$ to $D$. A \textit{polyomino} $\cP$ is a non-empty, finite collection of cells in $\Z^2$ where any two cells of $\cP$ are connected in $\cP$. For instance, see Figure \ref{Figure: Polyomino introduction}.
	\begin{figure}[h]
		\centering
		\includegraphics[scale=0.6]{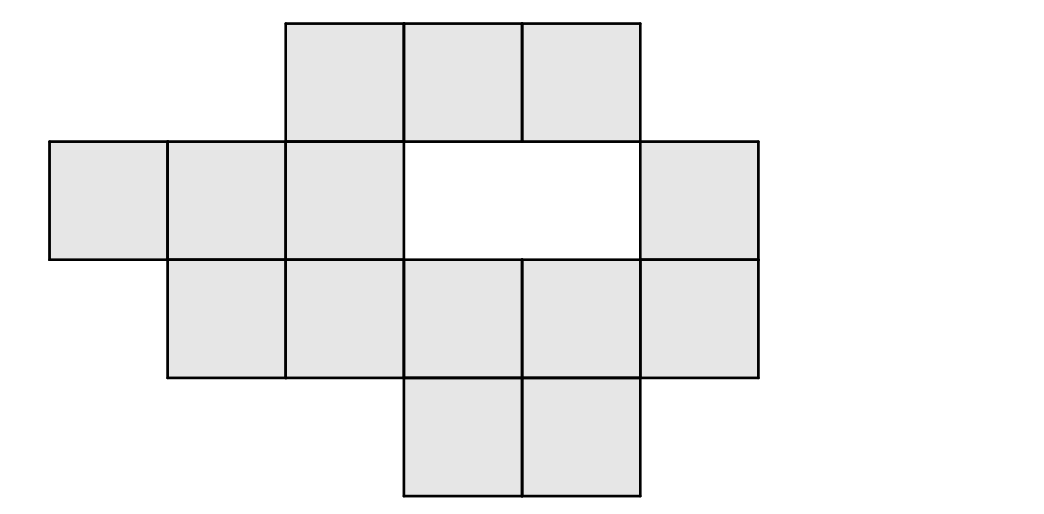}
		\caption{A polyomino.}
		\label{Figure: Polyomino introduction}
	\end{figure}
	
	\noindent Let $\cP$ be a polyomino. A \textit{sub-polyomino} of $\cP$ is a polyomino whose cells belong to $\cP$. We say that $\cP$ is \textit{simple} if for any two cells $C$ and $D$ not in $\cP$ there exists a path of cells not in $\cP$ from $C$ to $D$. A finite collection of cells $\cH$ not in $\cP$ is a \textit{hole} of $\cP$ if any two cells of $\cH$ are connected in $\cH$ and $\cH$ is maximal with respect to set inclusion. For example, the polyomino in Figure \ref{Figure: Polyomino introduction} is not simple. Obviously, each hole of $\cP$ is a simple polyomino, and $\cP$ is simple if and only if it has no hole. A polyomino is said to be \textit{thin} if it does not contain the square tetromino, which is a square obtained as a union of four distinct cells. The \textit{rank} of a polyomino, $\rank(\cP)$, is given by the number of its cells. Consider two cells $A$ and $B$ of $\Z^2$ with $a=(i,j)$ and $b=(k,l)$ as the lower left corners of $A$ and $B$ with $a\leq b$. A \textit{cell interval} $[A,B]$ is the set of the cells of $\Z^2$ with lower left corner $(r,s)$ such that $i\leqslant r\leqslant k$ and $j\leqslant s\leqslant l$. If $(i,j)$ and $(k,l)$ are in horizontal (or vertical) position, we say that the cells $A$ and $B$ are in \textit{horizontal} (or \textit{vertical}) \textit{position}.\\
	Let $\cP$ be a polyomino. Consider  two cells $A$ and $B$ of $\cP$ in vertical or horizontal position. 	 
	The cell interval $[A,B]$, containing $n>1$ cells, is called a
	\textit{block of $\cP$ of rank n} if all cells of $[A,B]$ belong to $\cP$. The cells $A$ and $B$ are called \textit{extremal} cells of $[A,B]$. Moreover, a block $\cB$ of $\cP$ is \textit{maximal} if there does not exist any block of $\cP$ which properly contains $\cB$. It is clear that an interval of $\ZZ^2$ identifies a cell interval of $\ZZ^2$ and vice versa, hence we can associate to an interval $I$ of $\ZZ^2$ the corresponding cell interval denoted by $\cP_{I}$. A proper interval $[a,b]$ is called an \textit{inner interval} of $\cP$ if all cells of $\cP_{[a,b]}$ belong to $\cP$. We denote by $\cI(\cP)$ the set of all inner intervals of $\cP$. An interval $[a,b]$ with $a=(i,j)$, $b=(k,j)$ and $i<k$ is called a \textit{horizontal edge interval} of $\cP$ if the sets $\{(\ell,j),(\ell+1,j)\}$ are edges of cells of $\cP$ for all $\ell=i,\dots,k-1$. In addition, if $\{(i-1,j),(i,j)\}$ and $\{(k,j),(k+1,j)\}$ do not belong to $E(\cP)$, then $[a,b]$ is called a \textit{maximal} horizontal edge interval of $\cP$. We define similarly a \textit{vertical edge interval} and a \textit{maximal} vertical edge interval. \\
	\noindent Following \cite{Trento}, we call a \textit{zig-zag walk} of $\cP$ a sequence $\cW:I_1,\dots,I_\ell$ of distinct inner intervals of $\cP$ where, for all $i=1,\dots,\ell$, the interval $I_i$ has either diagonal corners $v_i$, $z_i$ and anti-diagonal corners $u_i$, $v_{i+1}$, or anti-diagonal corners $v_i$, $z_i$ and diagonal corners $u_i$, $v_{i+1}$, such that:
	\begin{enumerate}
		\item $I_1\cap I_\ell=\{v_1=v_{\ell+1}\}$ and $I_i\cap I_{i+1}=\{v_{i+1}\}$, for all $i=1,\dots,\ell-1$;
		\item $v_i$ and $v_{i+1}$ are on the same edge interval of $\cP$, for all $i=1,\dots,\ell$;
		\item for all $i,j\in \{1,\dots,\ell\}$ with $i\neq j$, there exists no inner interval $J$ of $\cP$ such that $z_i$, $z_j$ belong to $J$.
	\end{enumerate}
	\noindent According to \cite{Cisto_Navarra_closed_path}, we recall the definition of a \textit{closed path polyomino}, and the configuration of cells characterizing its primality. We say that a polyomino $\cP$ is a \textit{closed path} if there exists a sequence of cells $A_1,\dots,A_n, A_{n+1}$, $n>5$, such that:
	\begin{enumerate}
		\item $A_1=A_{n+1}$;
		\item $A_i\cap A_{i+1}$ is a common edge, for all $i=1,\dots,n$;
		\item $A_i\neq A_j$, for all $i\neq j$ and $i,j\in \{1,\dots,n\}$;
		\item For all $i\in\{1,\dots,n\}$ and for all $j\notin\{i-2,i-1,i,i+1,i+2\}$, we have $V(A_i)\cap V(A_j)=\emptyset$, where $A_{-1}=A_{n-1}$, $A_0=A_n$, $A_{n+1}=A_1$ and $A_{n+2}=A_2$. 
	\end{enumerate}
 
	\begin{figure}[h]
		\centering
        \subfloat{\includegraphics[scale=0.5]{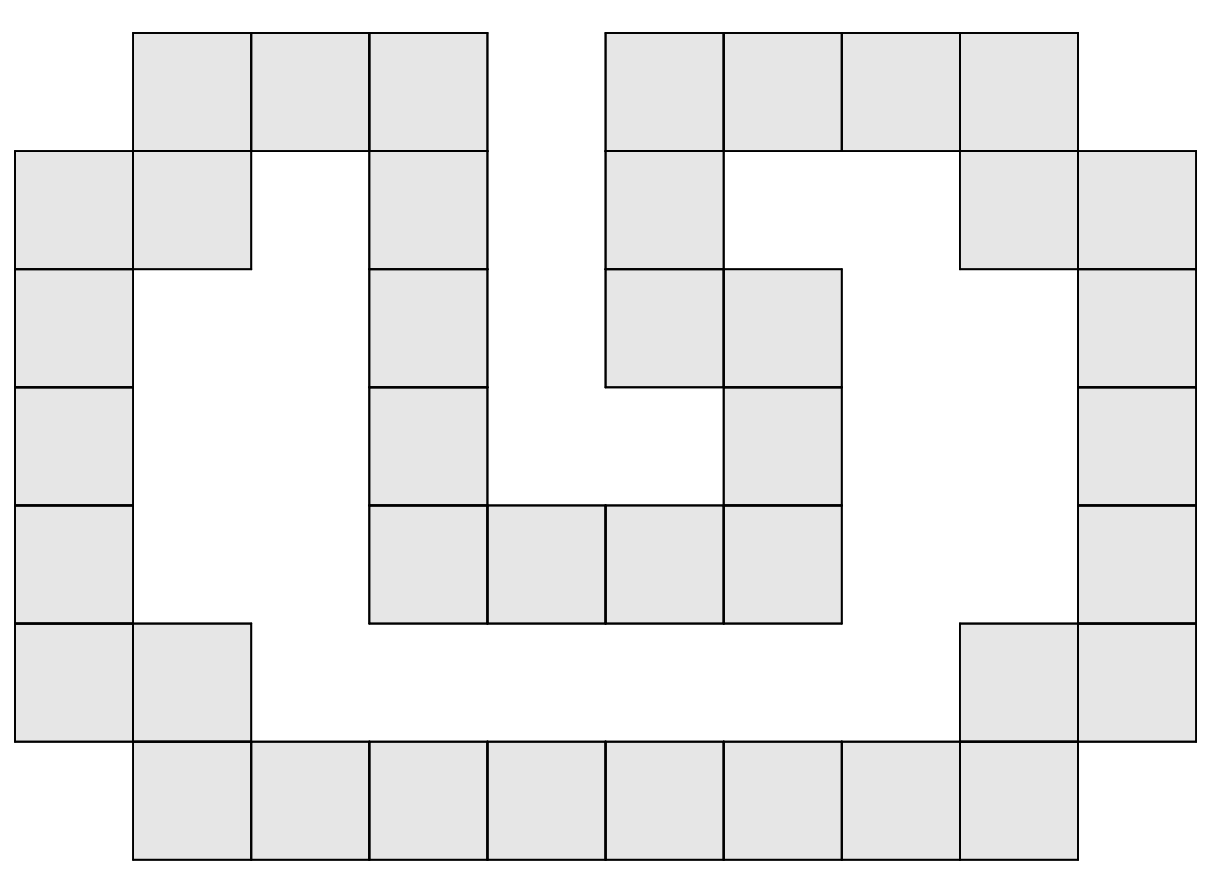}}\qquad\qquad
        \subfloat{\includegraphics[scale=0.5]{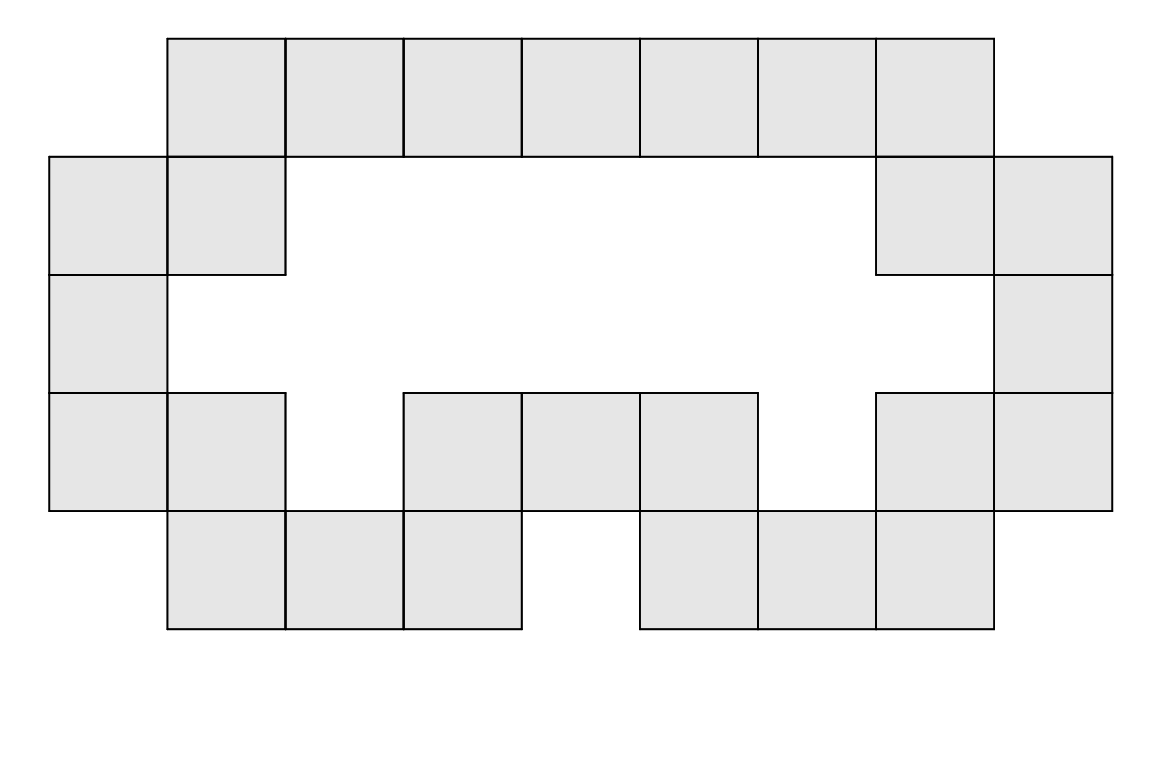}}
		\caption{An example of two closed paths.}
        \label{Figure: Example closed paths}
	\end{figure}
 
    \noindent A path of five cells $C_1, C_2, C_3, C_4, C_5$ of $\cP$ is called an \textit{L-configuration} if the two sequences $C_1, C_2, C_3$ and $C_3, C_4, C_5$ go in two orthogonal directions. A set $\cB=\{\cB_i\}_{i=1,\dots,n}$ of maximal horizontal (or vertical) blocks of rank at least two, with $V(\cB_i)\cap V(\cB_{i+1})=\{a_i,b_i\}$ and $a_i\neq b_i$ for all $i=1,\dots,n-1$, is called a \textit{ladder of $n$ steps} if $[a_i,b_i]$ is not on the same edge interval of $[a_{i+1},b_{i+1}]$ for all $i=1,\dots,n-2$. We recall that a closed path has no zig-zag walks if and only if it contains an $L$-configuration or a ladder of at least three steps (see \cite[Section 6]{Cisto_Navarra_closed_path}). For instance, in Figure \ref{Figure: Example closed paths} there is presented on the left side a closed path whose polyomino ideal is prime (so it does not contain zig-zag walks), and on the right side a closed path with zig-zag walks.

	\begin{lemma}\label{Lemma: Closed path number of vertices and cells}
		Let $\cP$ be a closed path polyomino. Then $|V(\cP)|=2\rank(\cP)$.
	\end{lemma}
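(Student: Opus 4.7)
The plan is to double-count vertex–cell incidences. Write $n=\rank(\cP)$ and, for each $v\in V(\cP)$, set $d(v)=|\{i\in\{1,\dots,n\}:v\in V(A_i)\}|$. Since $|V(A_i)|=4$ for every cell, summing over cells gives $\sum_{v\in V(\cP)}d(v)=4n$. Let $n_k=|\{v:d(v)=k\}|$ and let $c$ denote the number of changes of direction of the closed path. The goal is to produce three relations among $n_1,n_2,n_3,n,c$ and read off $|V(\cP)|=n_1+n_2+n_3=2n$.

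First I would show that $d(v)\le 3$ for every $v$. If $v$ lay in four distinct cells $A_{i_1},\dots,A_{i_4}$, then by condition $(4)$ of the definition of a closed path every pair of these indices would have cyclic distance at most $2$, forcing the four indices into a cyclic window of three consecutive integers---impossible by pigeonhole. Next I would identify the degree-three vertices with the turns: if $d(v)=3$, say $v\in V(A_{i-1})\cap V(A_i)\cap V(A_{i+1})$, then $v$ lies on both common edges $A_{i-1}\cap A_i$ and $A_i\cap A_{i+1}$, which are two distinct (hence perpendicular) edges of $A_i$ meeting at $v$, so $A_{i-1},A_i,A_{i+1}$ form an L and the path changes direction at $A_i$. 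Conversely, at every change of direction the unique inner corner of the L-configuration belongs to all three consecutive cells. Since $n>5$, the middle index of any three cyclically consecutive indices is determined, so this correspondence is a bijection and $n_3=c$.

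The final step counts pairs of cells sharing a vertex via $\sum_{v}\binom{d(v)}{2}$. By condition $(4)$, a vertex can be shared only between cells of cyclic distance $\le 2$: consecutive cells $A_i,A_{i+1}$ share exactly the two endpoints of their common edge (total contribution $2n$), while cells $A_i,A_{i+2}$ share a vertex iff the intermediate cell $A_{i+1}$ is a turn, in which case they share exactly the inner L-corner (total contribution $c$). Hence
\[
n_2+3n_3\;=\;\sum_{v\in V(\cP)}\binom{d(v)}{2}\;=\;2n+c.
\]
Combined with $n_3=c$ this gives $n_2=2n-2c$, and substituting into $n_1+2n_2+3n_3=4n$ produces $n_1=c$. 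Therefore $|V(\cP)|=n_1+n_2+n_3=c+(2n-2c)+c=2n=2\rank(\cP)$. The only delicate point is the case analysis at cyclic distance $2$ (straight passage through $A_{i+1}$ versus turn at $A_{i+1}$); this is routine once condition $(4)$ is invoked to exclude all farther incidences.
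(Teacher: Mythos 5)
Your proof is correct, and it takes a genuinely different route from the paper's. The paper argues by direct decomposition: it enumerates the local shapes (straight passage, turn, etc.) out of which any closed path is assembled and, for each shape, attaches exactly two distinct vertices to each cell, so that a consistent labelling of the whole path yields $|V(\cP)|=2\rank(\cP)$. You instead double-count vertex--cell incidences: from $\sum_v d(v)=4n$ and the pair count $\sum_v\binom{d(v)}{2}=2n+c$ (using condition $(4)$ to bound $d(v)\le 3$, to restrict shared vertices to cyclic distance $\le 2$, and to identify degree-$3$ vertices with changes of direction), you solve a small linear system for $n_1,n_2,n_3$ and read off $|V(\cP)|=2n$. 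Your approach has the advantage of being systematic and of making the role of condition $(4)$ explicit; it also incidentally yields the extra information $n_1=n_3=c$ and $n_2=2n-2c$, which the paper's construction does not isolate. The paper's decomposition is shorter to state and more visual, but relies on the reader accepting the claimed exhaustiveness of the pictured configurations. One small point worth tightening in your write-up: the step ``pairwise cyclic distance $\le 2$ forces the four indices into a window of three'' is stated a bit loosely --- what one actually shows is that no four distinct residues mod $n$ (with $n>5$) can be pairwise within cyclic distance $2$, which takes a short case check on the cyclic gaps rather than a one-line pigeonhole; the conclusion is nonetheless correct.
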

	\begin{proof}
			From \cite[Lemma 3.3]{Cisto_Navarra_closed_path}, $\cP$ contains a block $\cB$ of rank at least three. We may assume that $\cB$ is in horizontal position and we may label the cells of $\cP$ taking $A_{n-1},A_n$ and $A_1$ in $\cB$ as shown in Figure~\ref{Figure:sub-polyomino for prove number vertices are double of number cells} (A). We want to inductively assign a pair of vertices of $\cP$ to every cell of $\cP$. Let us start to attach to $A_1$ the lower and the upper right corners of $A_1$. Let $i\geq 1$ and consider the set $\cB_i=\{A_{i-1},A_i,A_{i+1}\}$. If $\cB_i$ is as in Figure \ref{Figure:sub-polyomino for prove number vertices are double of number cells} (B), up to rotations, then we attach to $A_{i+1}$ the two vertices of $\cP$ marked in red. Otherwise, if $\cB_i$ has the shape as in Figure~\ref{Figure:sub-polyomino for prove number vertices are double of number cells} (C), up to rotations or reflections, then we attach to $A_{i+1}$ the two blue vertices of $\cP$. This procedure ends after $n-1$ steps, considering the set $\cB_{n-1}=\{A_{n-2},A_{n-1},A_{n}\}$ and attaching to $A_n$ the lower and the upper right corners of $A_n$. Therefore, we can attach to every cell of $\cP$ two distinct vertices as defined before, and in conclusion, we get that $|V(\cP)|=2\rank(\cP)$.

	\begin{figure}[h!]
	\subfloat[]{\includegraphics[scale=0.8]{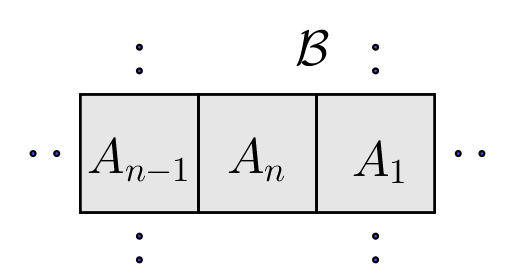}}\qquad
	\subfloat[]{\includegraphics[scale=0.8]{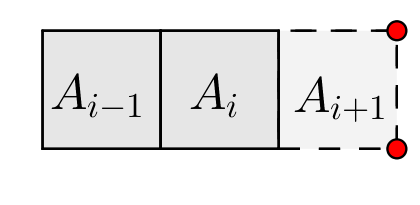}}\qquad
	\subfloat[]{\includegraphics[scale=0.8]{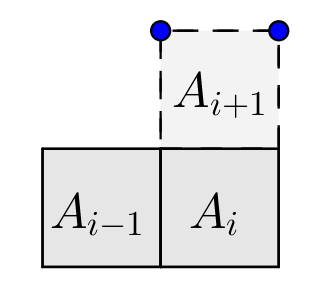}}\qquad
	\caption{Changes of directions in a closed path.}
	\label{Figure:sub-polyomino for prove number vertices are double of number cells}
\end{figure}
		
	\end{proof}
	\noindent Let $\cP$ be a polyomino. We set $S_\cP=K[x_v\:\; v\in V(\cP)]$, where $K$ is a field. If $[a,b]$ is an inner interval of $\cP$, with $a$,$b$ and $c$,$d$ respectively diagonal and anti-diagonal corners, then the binomial $x_ax_b-x_cx_d$ is called an \textit{inner 2-minor} of $\cP$. We define $I_{\cP}$ as the ideal in $S_\cP$ generated by all the inner 2-minors of $\cP$ and we call it the \textit{polyomino ideal} of $\cP$. We set also $K[\cP] = S_\cP/I_{\cP}$, which is the \textit{coordinate ring} of $\cP$. \\
	
	\section{Krull dimension of closed path polyominoes}\label{Krull}
	
	\noindent In this section we compute the Krull dimension of the coordinate ring attached to a closed path polyomino. We recall some basic facts on simplicial complexes. A \textit{finite simplicial complex} $\Delta$ on the vertex set $[n]=\{1,\dots,n\}$ is a
	collection of subsets of $[n]$ satisfying the following two conditions:
	\begin{enumerate}
		\item if $F'\in \Delta$ and $F \subseteq F'$ then $F \in \Delta$;
		\item $\{i\}\in \Delta$ for all $i=1,\dots,n$.
	\end{enumerate}  The elements of $\Delta$ are called \textit{faces}, and the dimension of a face is one less than its cardinality. An \textit{edge} of $\Delta$ is a face of dimension 1, while a \textit{vertex} of $\Delta$ is a face of dimension 0. The maximal faces of $\Delta$ with respect to the set inclusion are called \textit{facets}. The dimension of $\Delta$ is given by $\sup\{\dim(F):F\in \Delta\}$. We say that a simplicial complex $\Delta$ is \textit{pure} if all the facets have the same dimension. If $\Delta$ is pure, then the dimension of $\Delta$ is given trivially by the dimension of a facet of $\Delta$. Let $\Delta$ be a simplicial complex on $[n]$ and $R=K[x_1,\dots,x_n]$ be the polynomial ring in $n$ variables over a field $K$. To every collection $F=\{i_1,\dots,i_r\}$ of $r$ distinct vertices of $\Delta$, we associate a monomial $x^F$ in $R$ where	$x^F=x_{i_1}\dots x_{i_r}.$ The monomial ideal generated by all monomials $x^F$ such that $F$ is not a face of $\Delta$ is called \textit{Stanley-Reisner ideal} and it is denoted by $I_{\Delta}$. The \textit{face ring} of $\Delta$, denoted by $K[\Delta]$, is defined to be the quotient ring $R/I_{\Delta}$. It follows from \cite[Corollary 6.3.5]{Villareal}, if $\Delta$ is a simplicial complex on $[n]$ of dimension $d$, then $\dim K[\Delta]=d+1=\max\{s\:\; x_{i_1}\dots x_{i_s}\notin I_{\Delta},i_1<\dots<i_s\}$.
	
	\begin{defn}\rm \label{Definition: Gamma-path...}
		A polyomino $\cR$ is called \textit{$\Gamma$-path} with middle cell $D$ and hooking vertex $w$ if it consists of a maximal horizontal block $\cB_1=[C_1,B_1]$ of rank greater than three and a maximal vertical block $\cB_2=[B_2,C_2]$ of rank greater than three such that there is a cell $D$ not belonging to $\cB_{1}\cup\cB_2$ and $V(\cB_1)\cap V(\cB_2)=\{w\}$ where $w$ is the upper left corner of $D$. This is illustrated in Figure~ \ref{Figure:configurations of W-paths} (A). With reference to Figure \ref{Figure:configurations of W-paths}, we define $\tau$-paths, $W$-paths and $\zeta$-paths in a similar way.
		
		\begin{figure}[H]
			\subfloat[$\Gamma$-path]{\includegraphics[scale=0.7]{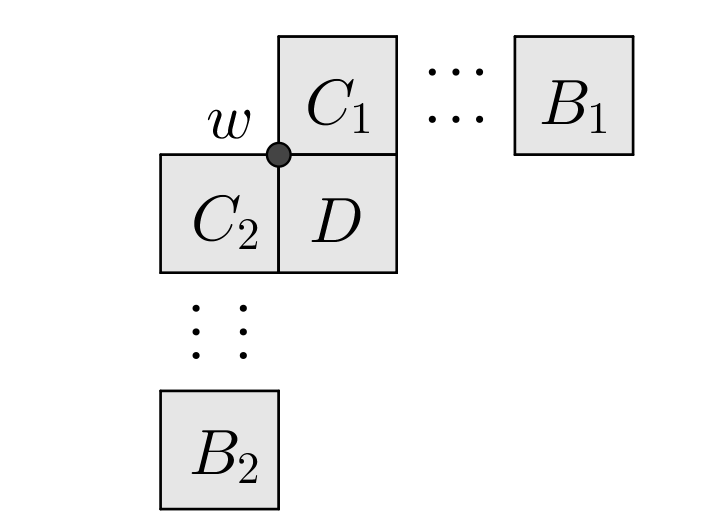}}
			\subfloat[$\tau$-path]{\includegraphics[scale=0.7]{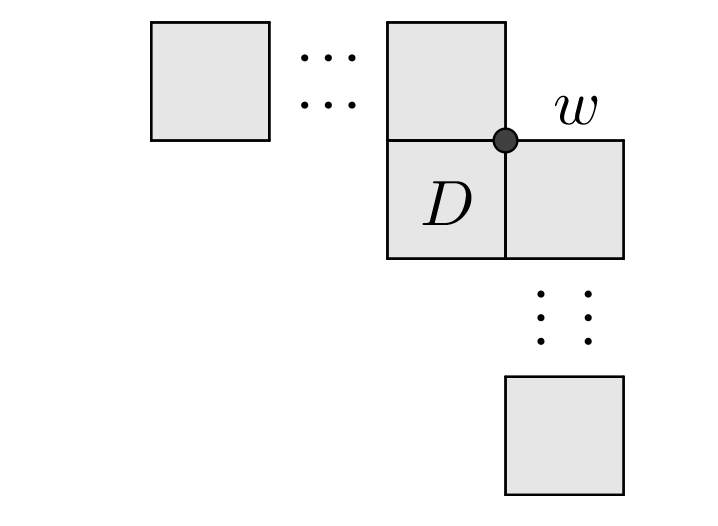}}
			\subfloat[$W$-path]{\includegraphics[scale=0.7]{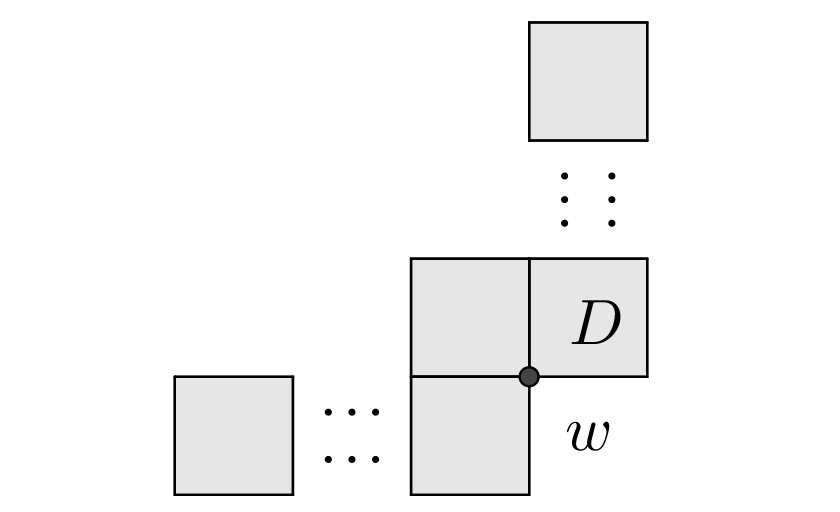}}
			\subfloat[$\zeta$-path]{\includegraphics[scale=0.7]{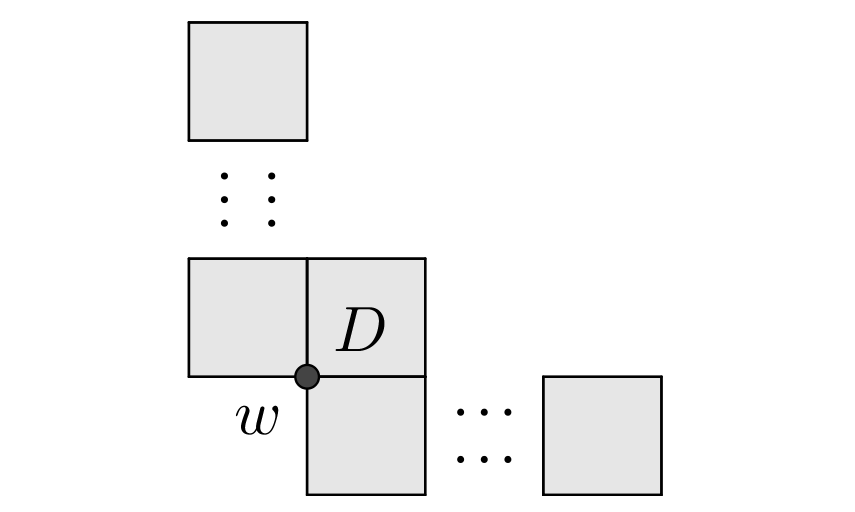}}
			\caption{Some configurations in a closed path with zig-zag walks.}
			\label{Figure:configurations of W-paths}
		\end{figure} 
		
		\noindent Moreover, we say that a pair $(\cF,\cG)$ of the previous polyominoes is \textit{compatible} if $\cF$ is a $\Gamma$-path ($\tau$-path or $W$-path or $\zeta$-path) and $\cG$ is one of the other paths.
	\end{defn}
	
	\begin{defn}\rm \label{Definition: Skew path...}
		A polyomino $\cS$ is called a \textit{LU-skew path} with hooking vertices $a,b$ if it is made up of two maximal blocks $\cB_1=[C_1,D_1]$ and $\cB_2=[C_2,D_2]$, both of them having rank greater than three, with $V(\cB_1)\cap V(\cB_2)=\{a,b\}$ where $a,b$ are the left and right upper corners of $D_1$, respectively. For instance, see Figure~\ref{Figure:configurations of Skew} (A). With reference to Figure~\ref{Figure:configurations of Skew}, we can define $LD$-skew, $DU$-skew and $UD$-skew paths in a similar way.
		
		\begin{figure}[h!]
            \flushleft\qquad\qquad\qquad
			\begin{minipage}{.3\textwidth}
				\subfloat[$LU$-skew]{\includegraphics[scale=0.7]{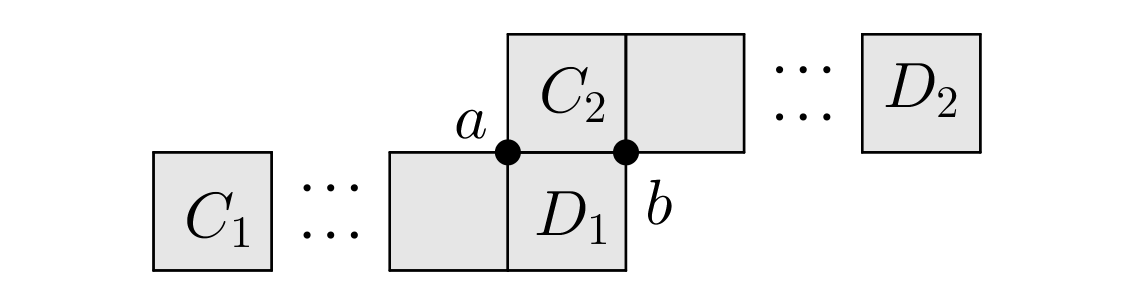}}
				\vspace{0.3cm}
				\subfloat[$LD$-skew]{\includegraphics[scale=0.7]{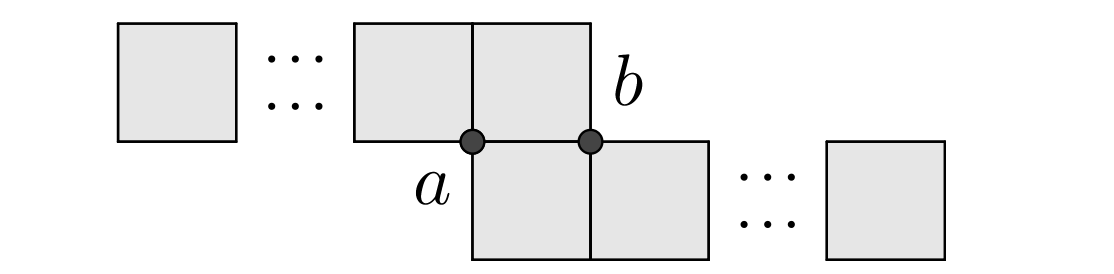}}
			\end{minipage}
			\begin{minipage}{.3\textwidth}
				\subfloat[$DU$-skew]{\includegraphics[scale=0.7]{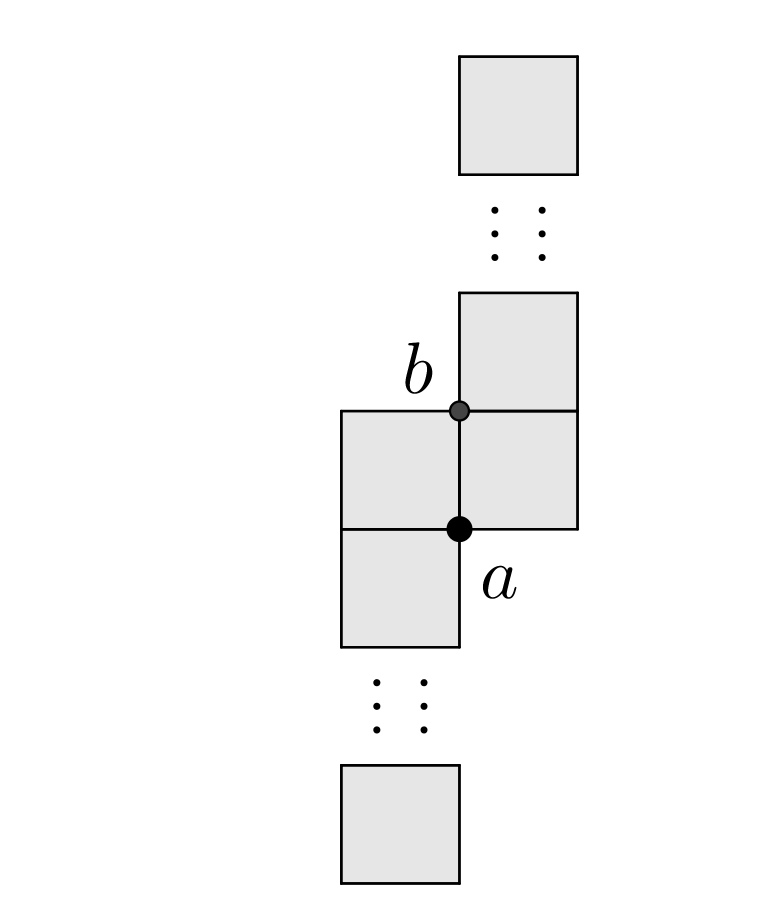}}
				\subfloat[$UD$-skew]{\includegraphics[scale=0.7]{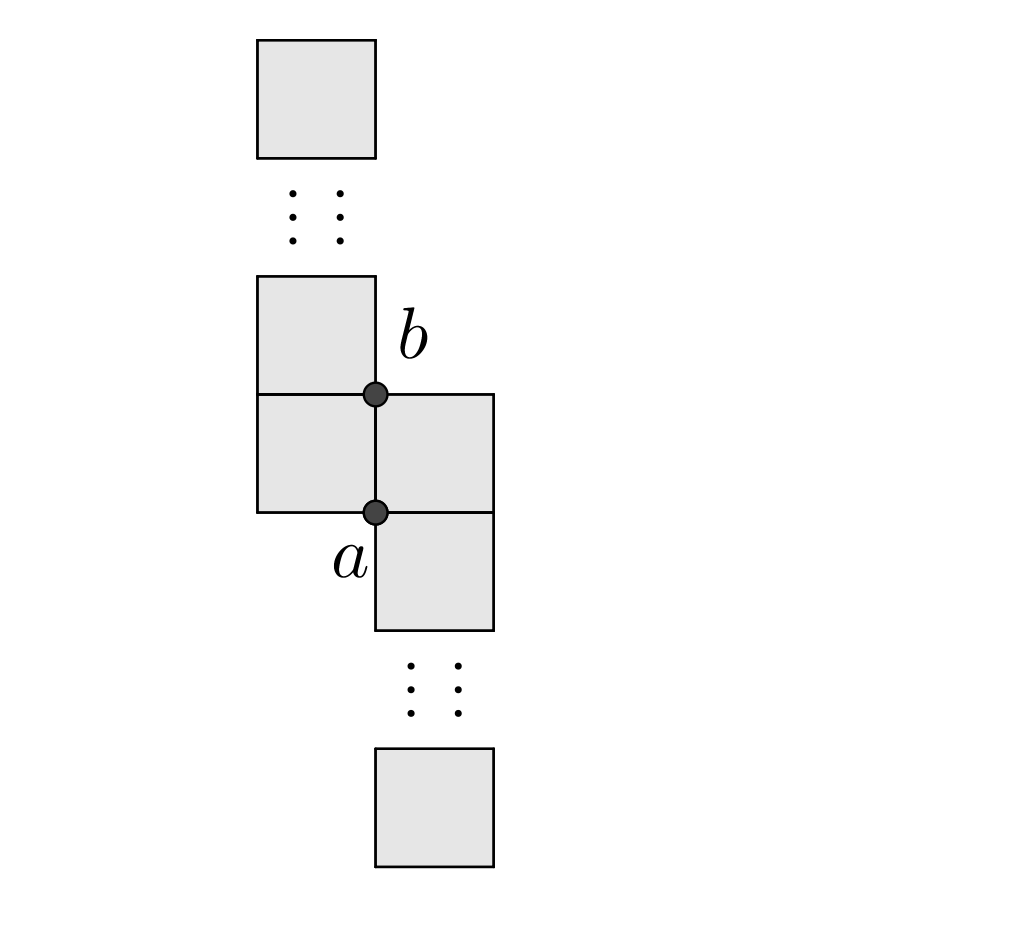}}
			\end{minipage}
			\caption{Some configurations in a closed path with zig-zag walks.}
			\label{Figure:configurations of Skew}
		\end{figure} 
	\end{defn}
	
	\noindent Let $\cP$ be a closed path and $(\cF,\cG)$ be a pair of two sub-polyominoes of $\cP$ as in Definition \ref{Definition: Gamma-path...}. Without loss of generality, we may assume that the middle cell of $\cF$ is $A_1$ and the middle cell of $\cG$ is $A_k$ with $k\in [n-1]$. Then a sub-polyomino $\cQ$ of $\cP$ as in Definition \ref{Definition: Skew path...} is said to be \textit{between $\cF$ and $\cG$} if $\cQ$ is contained in $\{A_i:1<i<k\}$. \\
	\noindent In the following remark we point out the structure of a closed path $\cP$ having a zig-zag walk, showing that $\cP$ consists of the configurations defined in Definitions \ref{Definition: Gamma-path...} and \ref{Definition: Skew path...}, arranged in a suitable way. The latter is essentially a technical consequence of the characterization given in \cite[Section 6]{Cisto_Navarra_closed_path} which states that a closed path has a zig-zag walk if and only if it has neither an $L$-configuration nor a ladder of at least three steps.
	
	\begin{rmk}\rm \label{Remark: for the arrangements of configurations}
	Let $\cP$ be a closed path with a zig-zag walk. We denote by $\cB_1$ a maximal block of $\cP$ having its length of at least three. It is not restrictive to assume that $\cB_{1}=[A_1, A_k]$, where $k\geq 3$, is in horizontal position and that $A_n$ is at North of $A_1$; otherwise we can rotate $\cP$ suitably or relabel the cells of $\cP$.	Observe that $A_{n-1}$ is necessarily at West of $A_n$, otherwise if it is at North then $\cB_{1}\cup \{A_{n-1},A_{n}\}$ contains an $L$-configuration. Consider now $A_{k+1}$ and note that it is at North of $A_k$ because it cannot be at East from the maximality of $\cB_1$ and it cannot be at South otherwise either $\cB_1\cup \{A_{k+1},A_{k+2}\}$ contains an $L$-configuration if $A_{k+2}$ is at South of $A_{k+1}$ or $\{A_{n-1},A_n\}\cup \cB_1\cup \{A_{k+1},A_{k+2}\}$ is contained in a ladder of three steps if $A_{k+2}$ is at East of $A_{k+1}$. For similar arguments $A_{k+2}$ is necessarily at East of $A_{k+1}$. Now, we want to define inductively the configurations of cells that appear in $\cP$, starting from $\cB_1$. Set $h\geq 1$. Let $\cB_{h}$ be a maximal block with at least three cells and we may assume that $\cB_{h}=[A_{j_h},A_{j_{h+1}}]$, where $k<j_{h}<j_{h+1}<n$, is in horizontal position and that $A_{j_h-1}$ is at North of $A_{j_h}$, otherwise it is sufficient to reflect $\cP$. For arguments similar to the ones done before, we can define the maximal block $\cB_{h+1}=[A_{j_{h+2}},A_{j_{h+3}}]$ of $\cP$ having at least three cells, arranged to $\cB_h$ following Figure \ref{Figure: Arrangements}. 
	
	\begin{figure}[h]
		\subfloat{\includegraphics[scale=0.7]{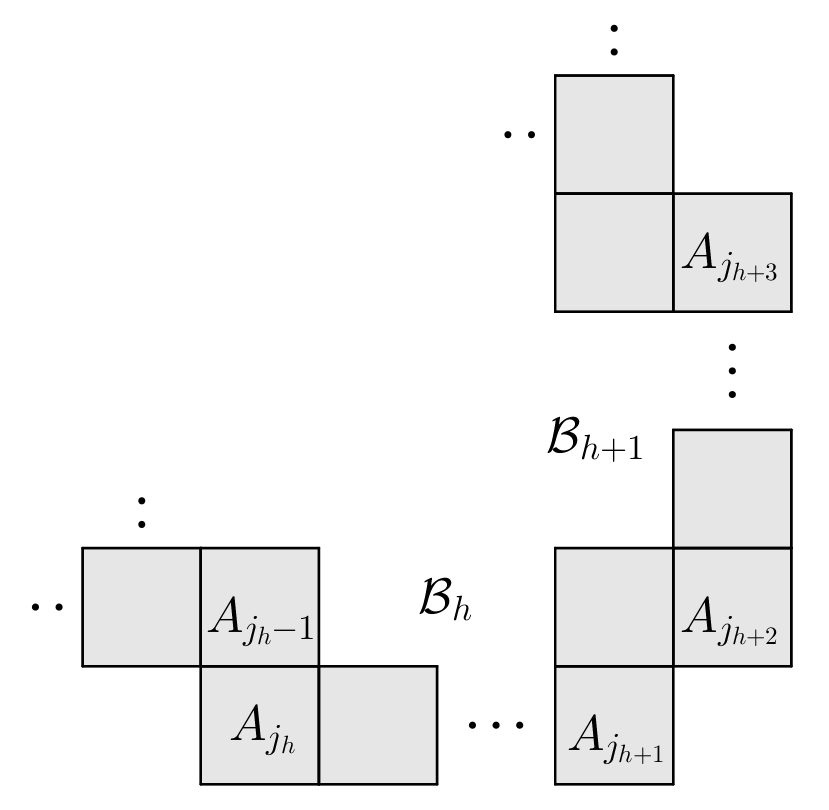}}\qquad\qquad
		\subfloat{\includegraphics[scale=0.7]{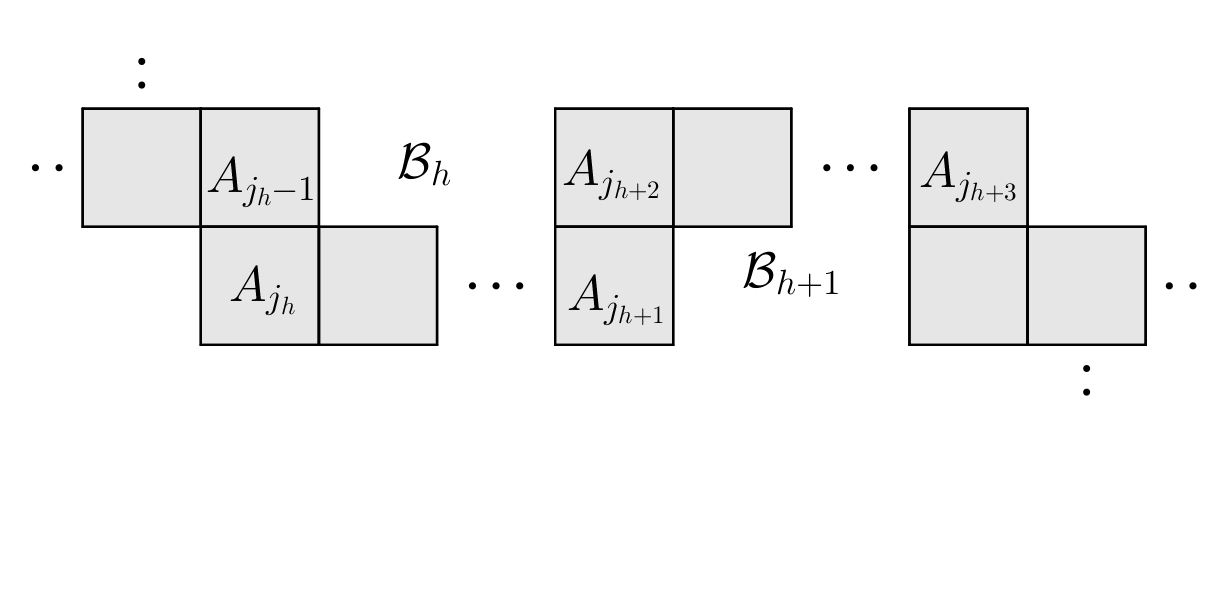}}
		\caption{Arrangements of $\cB_{h}$ and $\cB_{h+1}$.}
		\label{Figure: Arrangements}
	\end{figure} 

	\noindent Since $\cP$ is a closed path, this procedure is finite so there exists $t\in \NN$ such that $\cB_{t+1}=\cB_1$ and $\cB_{t}$ and $\cB_1$ are arranged as in Figure \ref{Figure: Arrangements at the end}.
	
	\begin{figure}[h]
		\subfloat{\includegraphics[scale=0.7]{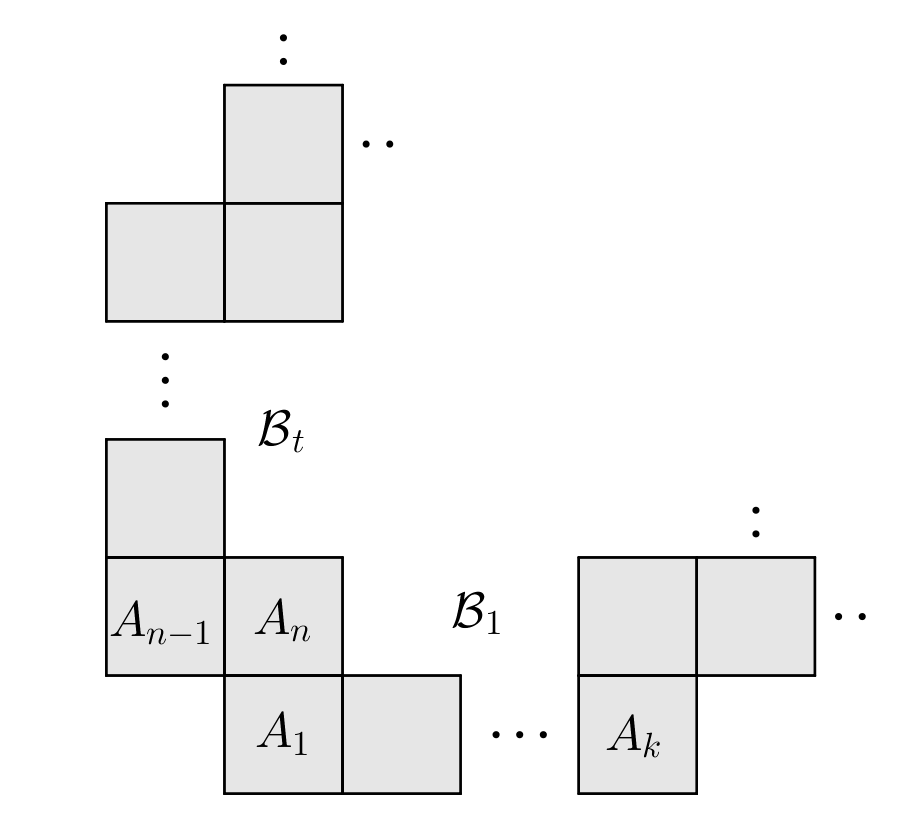}}\qquad
		\subfloat{\includegraphics[scale=0.7]{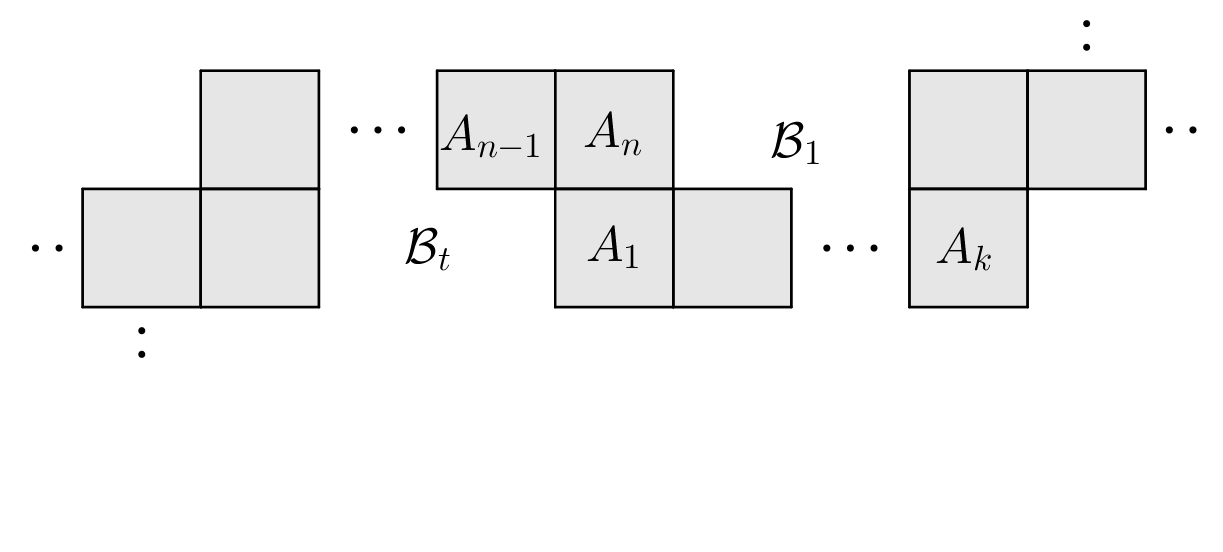}}
		\caption{Arrangements of $\cB_{t}$ and $\cB_{1}$.}
		\label{Figure: Arrangements at the end}
	\end{figure}

	\end{rmk}

 \begin{defn}\rm \label{Defn:order}
     Let $\cP$ be a closed path polyomino. Let $<^1$ be the total order on $V(\cP)$ defined as $u<^1 v$ if and only if, for $u = (i,j)$ and $v = (k,l)$, $i < k$, or $i = k$ and $j < l$. Let $Y\subset V(\cP)$ and consider $<^Y_{\mathrm{lex}}$ be the lexicographical order in $S_\cP$ induced by the following order on the variables of $S_\cP$:
	\[ \mbox{for}\ u,v \in V(\cP)\qquad
	x_u<^Y_{\mathrm{lex}} x_v \Leftrightarrow
	\left\{
	\begin{array}{l}
		u\notin Y\ \mbox{and}\ v\in Y \\
		u,v\notin Y\ \mbox{and}\ u<^1 v \\
		u,v\in Y\ \mbox{and}\ u<^1 v
	\end{array}
	\right.
	\]
	From \cite[Theorem 4.9] {Cisto_Navarra_CM_closed_path}, we know that there exists a suitable set $Y\subset V(\cP)$ such that the set of generators of $I_{\cP}$ forms the reduced Gr\"obner basis of $I_\cP$ with respect to $<^Y_{\mathrm{lex}}$, defined in \cite[Algorithm 4.7, Definition 4.8]{Cisto_Navarra_CM_closed_path}. We denote by $\Delta(\cP)$ the simplicial complex on the vertex set of $\cP$ having the square-free monomial ideal $J_{\cP}=\mathrm{in}_{<^Y_{\mathrm{lex}}}(I_{\cP})$ as the Stanley-Reisner ideal. 
 \end{defn}
 
	\begin{thm}\label{Thm: Dimension closed path}
		Let $\cP$ be a closed path and $\Delta(\cP)$ be the associated simplicial complex. Then $\Delta(\cP)$ is pure and the Krull dimension of $K[\cP]$ is $\vert V(\cP)\vert-\mathrm{rank}(\cP)$.
	\end{thm}
	
	\begin{proof}
		Firstly, we assume that $\cP$ does not contain any zig-zag walk. We know that $I_{\cP}$ is a toric ideal from \cite[Theorem 6.2]{Cisto_Navarra_closed_path}, so $\Delta_{\cP}$ is pure from \cite[Theorem 9.6.1]{Villareal}. Moreover, from \cite[Theorem 5.5]{Cisto_Navarra_Hilbert_series}, the Krull dimension is given by $\vert V(\cP)\vert-\mathrm{rank}(\cP)$. \\ 
		We need to examine only the case when $\cP$ contains a zig-zag walk. Suppose that we are in this case. Hence, in what follows, our aim is to define a suitable facet of $\Delta(\cP)$. We have that $\cP$ contains just the configurations defined in Definitions \ref{Definition: Gamma-path...} and \ref{Definition: Skew path...}, arranged as described in Remark \ref{Remark: for the arrangements of configurations}. Let $\cS_1$ be a $\Gamma$-path. Referring to Figure \ref{Figure:configurations of W-paths} (A), we label the cells of $\cP$ setting $D=A_1$, $C_1=A_2$ and so on. Let $k>1$ be the minimum integer such that $\cS_2$ is either a $W$-path or a $\tau$-path with middle cell $A_k$. The hooking vertices of $\cS_1$ and $\cS_2$ are on the same maximal horizontal edge interval $V$ of $\cP$ and, if there exist, the hooking vertices of the $LU$-skew or $LD$-skew paths between $\cS_1$ and $\cS_2$ belong to $V$. Moreover, by the minimality of $k$, there does not exist any $DU$-skew and $UD$-skew path between $\cS_1$ and $\cS_2$ belonging to $V$. We want to define a suitable set of vertices of $\cP$ in order to find a facet of $\Delta(\cP)$. We distinguish two cases, depending on if $\cS_2$ is a $W$-path or a $\tau$-path. We may consider $z_T,x_T\in Y$ and $y_T\notin Y$, with reference to Figure \ref{Figure: I case of I case, proof dimension}.\\
		\textsl{Case I:} Assume that $\cS_2$ is a $W$-path.
		
		\begin{enumerate}
			\item Suppose that there does not exist any $LU$-skew or $LD$-skew paths between $\cS_1$ and $\cS_2$. Let $\cB_1$ be the maximal horizontal block $[A_2,A_{k-1}]$. Denote the upper right corner of $A_{i}$ by $a_i^1$ for all $i=2,\dots,k-1$ and the upper left corner of $A_{2}$ by $a_{1}^1$. We set 
			$$V(\cS_1,\cS_2)=\{a_i^1:i\in [\mathrm{rank}(\cB_1)]\},$$ as in Figure \ref*{Figure: I case of I case, proof dimension}. 
			
			\begin{figure}[h]
				\centering
				\includegraphics[scale=0.7]{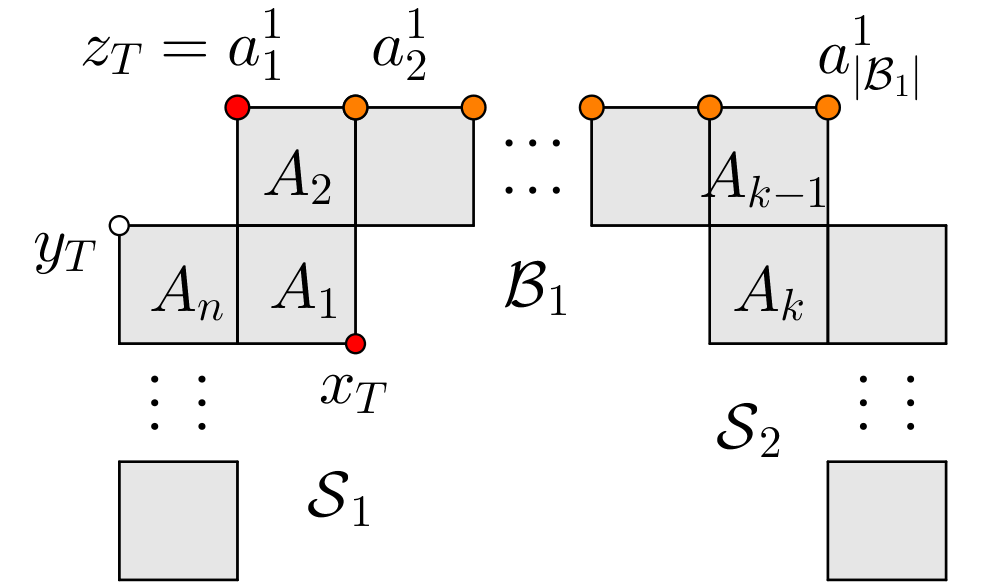}
				\caption{Arrangements of cells in \textsl{Case I}-(1)}
				\label{Figure: I case of I case, proof dimension}
			\end{figure}
			
			\item Suppose that there exist $LD$-skew or $LU$-skew paths between $\cS_1$ and $\cS_2$ and, in particular, that there are $m$ maximal horizontal blocks $\cB_i$ between $\cS_1$ and $\cS_2$. Set $\cB_j=[A_{k_j},A_{k_{j+1}}]$, where $k_1=2$ and $k_j<k_{j+1}$ for all $j\in [m]$. For all $j\in [m]$ odd and for all $i=1,\dots,\mathrm{rank}(\cB_j)$, we denote by $a_1^j$ the upper left corner of $A_{k_j}$ and by $a_i^j$ the right upper corner of $A_{k_j+i}$ in $\cB_j$.
			For all $j\in [m]$ even and for all $i=1,\dots,\mathrm{rank}(\cB_j)$, we denote by $a_i^j$ the lower right corner of $A_{k_j+i}$ in $\cB_j$. See Figure~\ref{Figure: II case of I case, proof dimension}.
			
			\begin{figure}[h]
				\centering
				\includegraphics[scale=0.7]{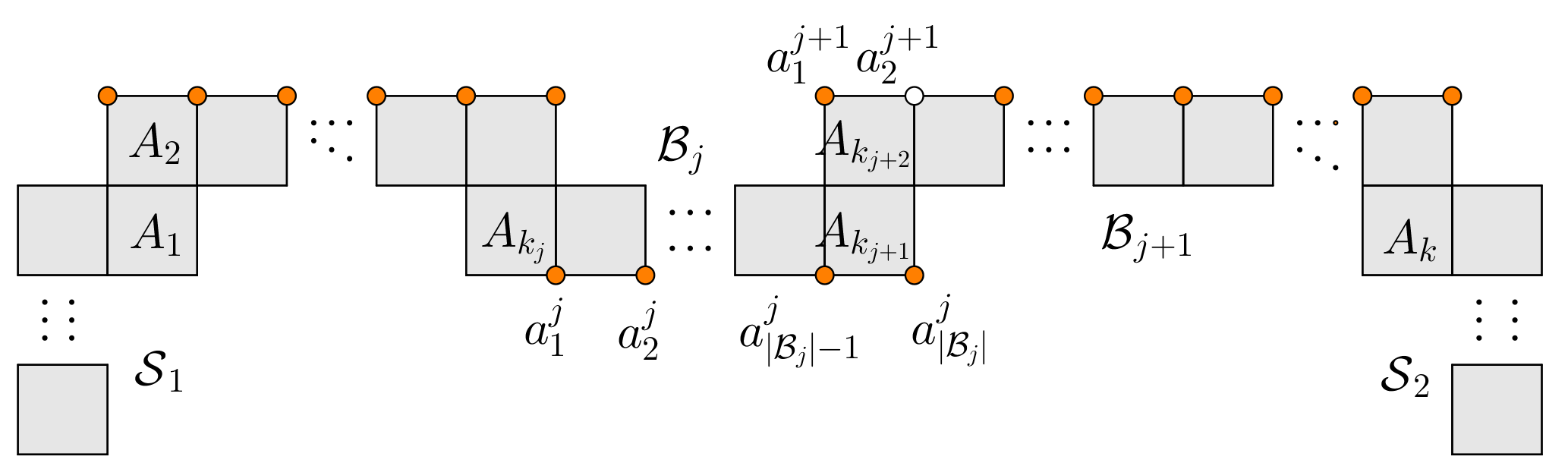}
				\caption{Arrangements of cells in \textsl{Case I}-(2)}
				\label{Figure: II case of I case, proof dimension}
			\end{figure}
\noindent Recall that for all $j\in [m]$, $a^j_{\vert\cB_j\vert-2},a^j_{\vert\cB_j\vert-1}\notin Y$ and $a^{j+1}_{1},a^{j+1}_{2}\in Y$ with $a^{j+1}_{2}>a^{j+1}_{1}$. Then we set
			$$ V(\cS_1,\cS_2)=\bigcup_{j\in [m]\atop even} \{a_i^j:i\in [\mathrm{rank}(\cB_j)]\} \cup \biggl(\bigcup_{j\in [m]\atop odd} \{a_i^j:i\in [\mathrm{rank}(\cB_j)+1]\}\biggr)\backslash \bigcup_{j\in [m]\atop odd}\{a_2^j\}. $$ 
		\end{enumerate}
		
		\textsl{Case II:} Assume that $\cS_2$ is a $\tau$-path.
		\begin{enumerate}
			\item Suppose that there exists just an $LD$-skew path between $\cS_1$ and $\cS_2$. For all $i=0,\dots,\mathrm{rank}(\cB_1)-1$, we denote by $a_1^1$ the upper left corner of $A_2$ and by $a_i^1$ the right upper corner of $A_{k_j+i}$ in $\cB_j$. 
			For all $i=1,\dots,\mathrm{rank}(\cB_2)-1$, we denote by $a_i^2$ the lower right corner of $A_{t+i}$ in $\cB_2$. See Figure \ref{Figure: I case of II case, proof dimension}. In this case, we set 
			$$ V(\cS_1,\cS_2)=\{a_i^1:i\in [\mathrm{rank}(\cB_1)]\} \cup\{a_i^2:i\in [\mathrm{rank}(\cB_2)]\}.$$
			
			\begin{figure}[h]
				\centering
				\includegraphics[scale=0.7]{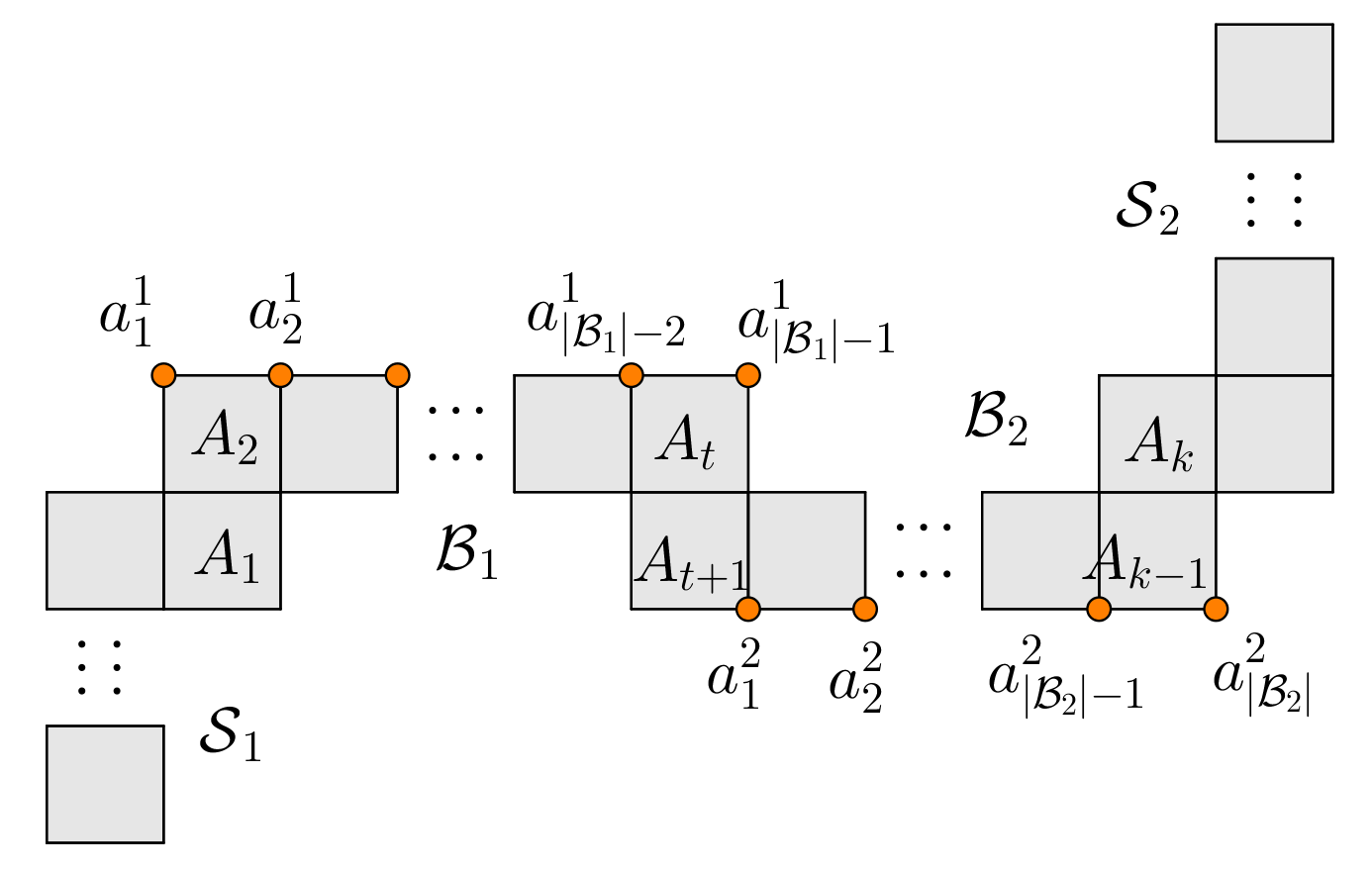}
				\caption{Arrangements of cells in \textsl{Case II}-(1)}
				\label{Figure: I case of II case, proof dimension}
			\end{figure}
			
			\item Suppose that there exist $LD$-skew or $LU$-skew paths between $\cS_1$ and $\cS_2$. With the same notations as in (2) of \textsl{Case I} (see Figure \ref{Figure: II case of II case, proof dimension}), we set
			$$ V(\cS_1,\cS_2)=\bigcup_{j\in [m]\atop even} \{a_i^j:i\in [\mathrm{rank}(\cB_j)]\} \cup \biggl(\bigcup_{j\in [m]\atop odd} \{a_i^j:i\in [\mathrm{rank}(\cB_j)]\}\biggr)\backslash \bigcup_{j\in [m]\atop odd}\{a_2^j\}. $$ 
			
			\begin{figure}[h]
				\centering
				\includegraphics[scale=0.7]{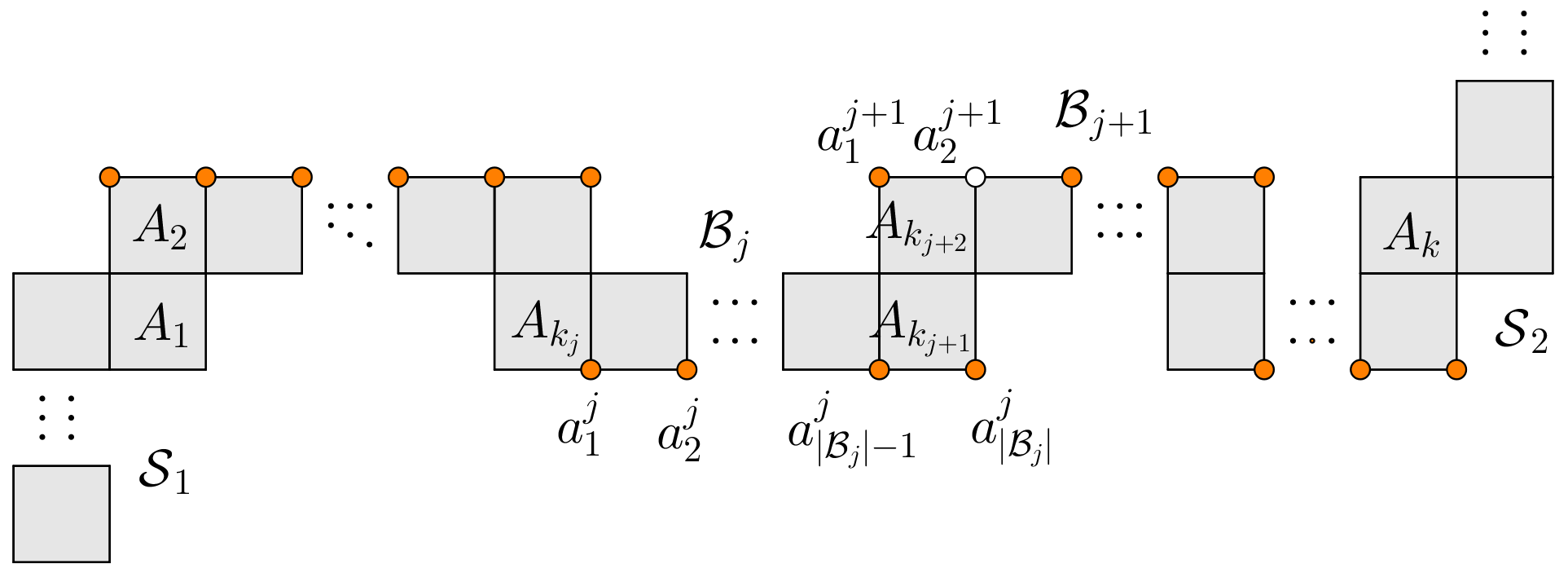}
				\caption{Arrangements of cells in \textsl{Case 2}-(2)}
				\label{Figure: II case of II case, proof dimension}
			\end{figure}
			\end{enumerate}
		
		\noindent In each case, we can define the following bijective correspondence $\phi_{1,2}:V(\cS_1,\cS_2)\to \{A_1\}\cup \bigcup_{i=1}^m \cB_i$ with $\phi_{1,2}(a_1^1)=A_1$ and $\phi_{1,2}(a_i^j)=A_{k_j+i}$ for all $j\in [m]$ and for all $i\in[\mathrm{rank}(\cB_j)]$; in particular, $\vert V(\cS_1,\cS_2)\vert=\sum_{i=1}^{m}\mathrm{rank}(\cB_i)+1$. Once we have defined the set $V(\cS_1,\cS_2)$, we can use the same arguments for all pairs $(\cS_i,\cS_{i+1})$ of compatible paths, where $\cS_{i+1}$ is taken by minimality with respect $\cS_i$, as done for $(\cS_1,\cS_2)$. Assume that there exist $t$ such pairs, where $\cS_{t+1}=\cS_1$. So as done before, for all $i\in [t]$, we can define $V(\cS_i,\cS_{i+1})$ by similar arguments as for $V(\cS_1,\cS_2)$. Hence, $V=\cup_{i=1}^t V(\cS_i,\cS_{i+1})$.
		Observe that $\vert V\vert=\mathrm{rank} (\cP)$. We want to prove that $F=\{x_v:v\in V\}$ is a facet of $\Delta(\cP)$. Obviously, $F$ is a face of $\Delta(\cP)$, so we need to prove the maximality with respect to the set inclusion. Suppose, by contradiction, that $F$ is not maximal. Then there exists a $p\in V(\cP)\backslash V$ such that $F\subset F\cup \{x_p\}$. Without loss of generality, we may assume that $p$ is a vertex of the sub-polyomino $\{A_1\}\cup \bigcup_{i=1}^m \cB_i$ between $\cS_1$ and $\cS_2$. 
		\begin{enumerate}
			\item Assume that we are in $(1)$ of \textsl{Case I}. Suppose $p\in V(\cB_1)$. If $p$ is the lower left corner of $A_2$, then since $y_T\notin Y$, we get the contradiction that $x_px_q\in J_{\cP}$, where $q$ is the lower left corner of $A_n$. In the other cases, we obtain similarly a contradiction considering $\{x_{a_1^1},x_p\}$, as well as when $p$ is the lower left or right corner of $A_1$.  
			\item Assume that we are in $(2)$ of \textsl{Case I}. The only case which we discuss is when $p=a_{2}^{j+1}$ for some $j\in[m]$. In this case, since $a_{2}^{j+1}>a_{1}^{j+1}$ we have $x_{a^j_{\vert\cB_j\vert-1}}x_{a_2^{j+1}}\in J_{\cP}$. But $\{x_{a^j_{\vert\cB_j\vert-1}},x_{a_2^{j+1}}\}\in \Delta(\cP)$ since $p=a_{2}^{j+1}$, so we get a contradiction.
			\item In the subcases $(1)$ and $(2)$ of \textsl{Case II}, we can argue in a similar way, and we obtain a contradiction.
		\end{enumerate}
		
	\noindent Therefore, $F$ is a facet of $\Delta(\cP)$. Now, we prove that all other facets of $\Delta(\cP)$ can be obtained from $F$ replacing some vertices of $F$ with other ones in the same number. Let $G$ be a facet of $\Delta(\cP)$. Since $\cP$ is union of the configurations defined in Definitions \ref{Definition: Gamma-path...} and \ref{Definition: Skew path...}, arranged suitably as explained in Remark \ref{Remark: for the arrangements of configurations}, it is sufficient to study the behavior of $G$ in the cases described in Figures \ref{Figure: I case of I case, proof dimension}, \ref{Figure: II case of I case, proof dimension}, \ref{Figure: I case of II case, proof dimension}, \ref{Figure: II case of II case, proof dimension}. Without losing of generality, we may examine the case in Figure \ref{Figure: I case of I case, proof dimension} and, with reference to Figure \ref{Figure: Proof for pure simplicial 1}, it is sufficient to focus on the vertices $\cL=\{a_i,b_i\}_{i\in [n]}\cup \{c_1,c_2, e,w, f,g, d_1,d_2\}$ (where $[n]=\{1,\dots,n\}$). Note that the orange vertices are the points of $F$ belonging to the vertical cell intervals $\cB_2$ and $\cB_3$. Moreover, we recall that $c_1$, $d_{m-1}$ and $d_m$ cannot belong to $Y$, because $a_1=z_T$ and $w=x_T$ are in $Y$.    
	
	\begin{figure}[h]
		\centering
		\includegraphics[scale=0.7]{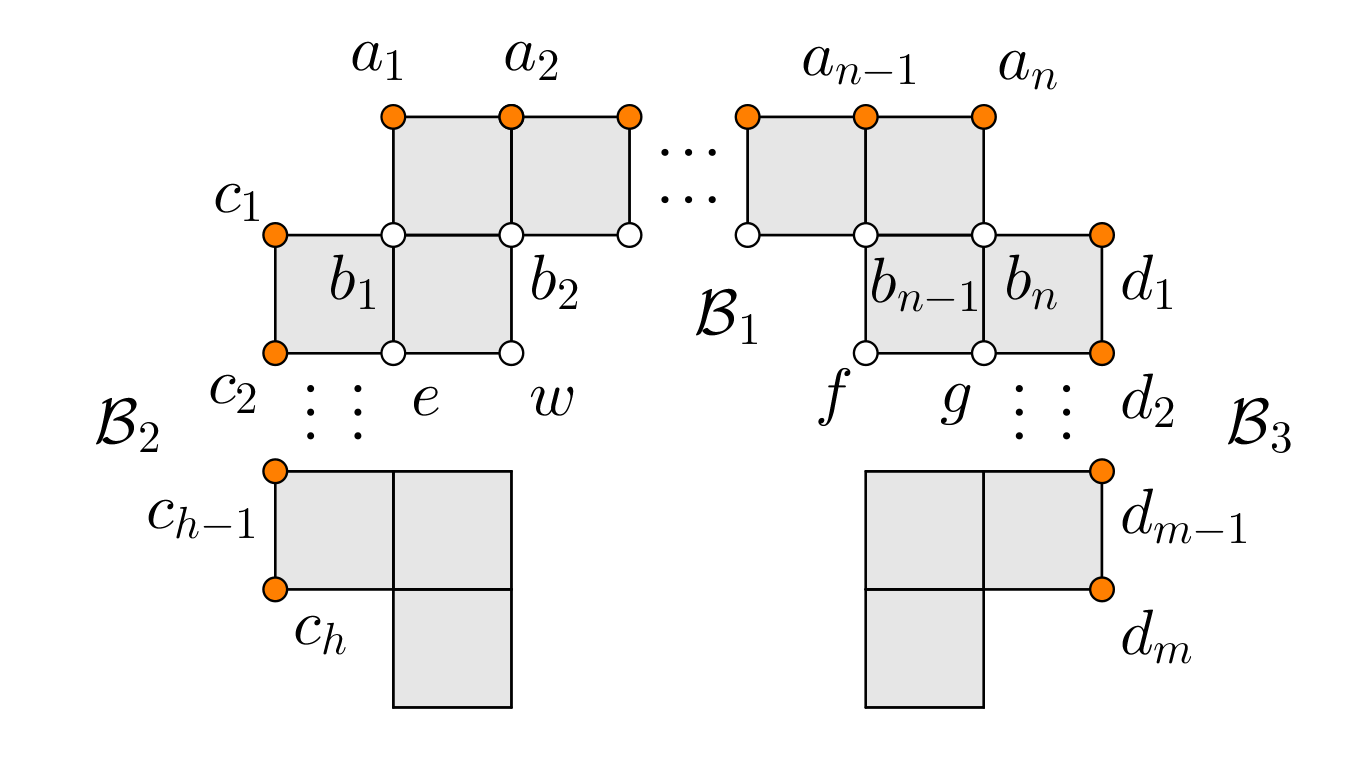}
		\caption{Arrangements of $\cB_1$, $\cB_2$ and $\cB_3$.}
		\label{Figure: Proof for pure simplicial 1}
	\end{figure}
	
	\noindent Observe that we cannot replace $a_1$ in $F$ with $b_i$ for $i\in\{2,\dots,n-1\}$, otherwise $\{b_i,a_{i+1}\}\in \Delta(\cP)$, a contradiction. A similar conclusion, that is $\{c_2,b_1\}\in \Delta(\cP)$, arises if we replace $a_1$ with $b_1$ in $F$. Moreover, note that if $a_1\in G$, then we cannot replace any point in $\cL\setminus\{a_1\}$ with another one, otherwise we get a contradiction similar to the previous one. The only possibilities for $G$ are described in the following:
	\begin{enumerate}
		\item $G=F\setminus\{a_1\}\cup \{b_n\}$;
		\item $G\in \{H_i:i\in \{2,\dots,n\}\}$, where $$H_i=F\setminus \big(\{a_1\}\cup \{a_k:k\in\{i+1,\dots,n\}\}\big) \cup\big(\{b_n\}\cup \{b_k:k\in\{i,\dots,n-1\}\}\big);$$ 
		\item $G\in \{K_i:i\in \{2,\dots,n\}\}$, where $K_i=H_i\setminus \{b_n,d_1\}\cup \{f,g\};$
		\item $G=F\setminus \{a_1,c_1\}\cup \{b_n,w\}$;
		\item $G\in \{P_i:i\in \{2,\dots,n\}\}$, where $$P_i=F\setminus \big(\{a_1,c_1\}\cup \{a_k:k\in\{i+1,\dots,n\}\}\big) \cup\big(\{w,b_n\}\cup \{b_k:k\in\{i,\dots,n-1\}\}\big);$$ 
		\item $G\in \{Q_i:i\in \{2,\dots,n\}\}$, where $Q_i=P_i\setminus \{b_n,d_1\}\cup \{f,g\}.$
	\end{enumerate} 
	
	\noindent In each of the presented cases, we have $\vert G\vert =\vert F\vert$. In general, we can extend the previous arguments to each part of $\cP$, and we always obtain that $\vert G\vert =\vert F\vert$. Therefore, there does not exist any facet $G$ in $\Delta(\cP)$ such that $\vert G\vert > \mathrm{rank}(\cP)$ or $\vert G\vert <\mathrm{rank}(\cP)$. Hence, all facets in $\Delta(\cP)$ have the same dimension, so $\Delta(\cP)$ is pure. Moreover, $\dim\Delta(\cP)=\mathrm{rank}(\cP)-1$, so $\dim K[\cP]=\mathrm{rank}(\cP)$. From Lemma \ref*{Lemma: Closed path number of vertices and cells}, we obtain that $\dim K[\cP]~=\vert V(\cP)\vert-\mathrm{rank}(\cP)$.	
	\end{proof}

	\begin{coro}\label{Coro: height of P}
	Let $\cP$ be a closed path polyomino. Then $\mathrm{ht}(I_{\cP})=\rank(\cP)$.
	\end{coro}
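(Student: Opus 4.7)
The plan is to deduce this directly from Theorem~\ref{Thm: Dimension closed path} by invoking the standard relation between height and Krull dimension in a polynomial ring over a field. The ring $S_\cP = K[x_v : v\in V(\cP)]$ is a polynomial ring in $|V(\cP)|$ variables over $K$, so it is regular (in particular Cohen--Macaulay and catenary) of Krull dimension $|V(\cP)|$. Since $I_\cP$ is a proper homogeneous ideal of $S_\cP$ generated by the inner $2$-minors, every minimal prime $\mathfrak{p}$ over $I_\cP$ satisfies $\mathrm{ht}(\mathfrak{p}) + \dim(S_\cP/\mathfrak{p}) = \dim S_\cP$, and taking the prime $\mathfrak{p}$ for which $\dim(S_\cP/\mathfrak{p})$ realizes $\dim K[\cP]$ yields the equality
\[
\mathrm{ht}(I_\cP) = \dim S_\cP - \dim K[\cP].
\]

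I would then substitute the two known quantities: $\dim S_\cP = |V(\cP)|$ by construction, and $\dim K[\cP] = |V(\cP)| - \rank(\cP)$ by Theorem~\ref{Thm: Dimension closed path}. Carrying out the subtraction gives $\mathrm{ht}(I_\cP) = |V(\cP)| - (|V(\cP)| - \rank(\cP)) = \rank(\cP)$, as required.

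There is no real obstacle here; the entire content of the corollary is packaged into Theorem~\ref{Thm: Dimension closed path}, and the corollary amounts to translating the computation of the Krull dimension of the coordinate ring into the height of the defining ideal via a one-line application of the standard dimension formula for ideals in a Cohen--Macaulay polynomial ring.
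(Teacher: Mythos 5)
Your proof is correct and takes essentially the same route as the paper: the paper's proof is a one-line citation of Theorem~\ref{Thm: Dimension closed path} together with \cite[Corollary 3.1.7]{Villareal}, which is precisely the dimension identity $\mathrm{ht}(I) = \dim S_\cP - \dim(S_\cP/I)$ for ideals in a polynomial ring over a field that you derive explicitly from catenarity and the fact that $\mathrm{ht}(\mathfrak{p}) + \dim(S_\cP/\mathfrak{p}) = \dim S_\cP$ holds for every prime.
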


	\begin{proof}
	It follows from Theorem~\ref{Thm: Dimension closed path} and \cite[Corollary 3.1.7]{Villareal}.
	\end{proof}
	
	\noindent It is known that if $\cP$ is a simple polyomino, then $\dim(K[\cP])=|V(\cP)|-\rank(\cP)$, so $\mathrm{ht}(I_{\cP})=\rank(\cP)$. The following conjecture arises naturally.
	
	\begin{conj}
		Let $\cP$ be a non-simple polyomino. Then $\mathrm{ht}(I_{\cP})=\rank(\cP)$.
	\end{conj}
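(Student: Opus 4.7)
The plan is to show that $\dim K[\cP]=|V(\cP)|-\rank(\cP)$, which via $\mathrm{ht}(I_{\cP})+\dim K[\cP]=|V(\cP)|$ yields the desired height formula. I would try to push the strategy of Theorem~\ref{Thm: Dimension closed path} as far as possible: fix a squarefree initial ideal of $I_{\cP}$ and then combinatorially exhibit a facet of the associated Stanley–Reisner complex of the correct cardinality.

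First I would look for a lex order $<^Y_{\mathrm{lex}}$ on $S_\cP$, defined by a carefully chosen subset $Y\subseteq V(\cP)$, such that the inner $2$-minors of $\cP$ (together with possibly finitely many ``corrective'' binomials coming from the holes, in the spirit of the algorithm of \cite{Cisto_Navarra_CM_closed_path}) form the reduced Gr\"obner basis of $I_{\cP}$. The set $Y$ should be chosen by scanning the boundary of each hole and flipping the two anti-diagonal corners of each offending inner interval into the ``high'' class, exactly as in the closed-path case. The existence of such a $Y$ for arbitrary non-simple polyominoes is the point at which the proof genuinely departs from the closed-path setting and would have to be established first; a natural strategy is to induct on the number of holes, at each step enlarging $Y$ to compensate for the new zig-zag walks produced by the added hole.

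Once a squarefree initial ideal $J=\mathrm{in}_{<^Y_{\mathrm{lex}}}(I_{\cP})$ is available, I would interpret $J$ as the Stanley–Reisner ideal of a simplicial complex $\Delta(J)$ and attempt to construct a facet $F$ with $|F|=|V(\cP)|-\rank(\cP)$. The strategy is to define a surjection $\phi\colon V(\cP)\setminus F\to \{C\in \cP\}$ — one ``excluded'' vertex per cell — such that for every inner interval $[a,b]$ of $\cP$ at least one diagonal corner of $[a,b]$ lies outside $F$. The assignment $\phi$ would be built locally: for each maximal horizontal (or vertical) block, choose one upper (or lower) corner per cell, with the choice of side depending on whether the block sits on the boundary of a hole and on the $Y$-labelling of its endpoints, exactly as in the four sub-cases of Theorem~\ref{Thm: Dimension closed path}. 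The non-trivial check is that $F=V(\cP)\setminus \mathrm{Im}(\phi)$ is actually a face (no excluded pair of diagonal corners survives in $F$) \emph{and} is maximal (every additional vertex lies in some excluded diagonal pair). A Hall-type argument may be useful to guarantee that such a system of representatives $\phi$ exists globally once it has been defined on each maximal block.

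The main obstacle is twofold. First, the Gr\"obner basis step is delicate: for non-simple polyominoes the additional generators of $J$ encode the interaction between different holes, and there is no a priori reason why a single lex order should suffice in all configurations; most likely one needs to combine $<^1$ on the vertices with an auxiliary ``hole-aware'' refinement. Second, even granted a clean squarefree initial ideal, the construction of the facet depends on a uniform labelling of blocks that has so far been verified only when the cells are arranged along a cycle (the closed-path case); for genuinely $2$-dimensional non-simple polyominoes one must handle maximal blocks that are simultaneously on the outer boundary and on the boundary of several holes. I would expect that the combinatorial facet-construction, once Step~1 is carried out, can be completed by induction on $\rank(\cP)$, removing one extremal cell at a time and using Corollary~\ref{Coro: height of P} on the closed-path components that appear as sub-polyominoes, but the bookkeeping for holes adjacent to several blocks is where the argument is most likely to break down and require new ideas.
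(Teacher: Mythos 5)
The statement you are attempting to prove is labelled a \emph{Conjecture} in the paper, not a theorem; the authors offer no proof of it, and they only establish the height formula $\mathrm{ht}(I_{\cP})=\rank(\cP)$ in the special case of closed path polyominoes (Theorem~\ref{Thm: Dimension closed path} and Corollary~\ref{Coro: height of P}). So there is no paper proof to compare against, and your write-up should be judged as a standalone proof attempt of an open problem.

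As such, it does not succeed, and to your credit you essentially say so yourself. The first step of your strategy --- producing a subset $Y\subseteq V(\cP)$ and a lex order $<^Y_{\mathrm{lex}}$ for which the inner $2$-minors form a reduced Gr\"obner basis with squarefree initial ideal --- is not available for general non-simple polyominoes. The construction of $Y$ used in \cite{Cisto_Navarra_CM_closed_path} exploits the one-dimensional cyclic arrangement of cells in a closed path (every cell has exactly two neighbours, the holes have a single boundary circuit, and one can scan around it). For a non-simple polyomino with several holes, with maximal blocks adjacent to more than one hole, or with cells of degree three or four, there is no such scanning order, and it is genuinely unknown whether any lex order yields a quadratic (let alone squarefree) Gr\"obner basis. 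Your suggestion to ``induct on the number of holes'' does not address the actual obstruction: adding a hole changes the set of inner intervals globally, not just locally, so an order $Y$ that works for $h$ holes has no reason to extend to $h+1$. Until this Gr\"obner basis step is secured, the Stanley--Reisner complex $\Delta(J)$ is not defined and the facet construction cannot even begin.

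Your second step has a similar status. Even granting a squarefree initial ideal, the facet-building in Theorem~\ref{Thm: Dimension closed path} is a case analysis on the local configurations ($\Gamma$-paths, $\tau$-paths, skew paths) that occur \emph{only} in closed paths; those lemmas do not transfer. Your appeal to a Hall-type system of distinct representatives, one excluded vertex per cell, is a reasonable heuristic, but you would need to verify the marriage condition for the bipartite graph between cells and vertices induced by the inner intervals, and this is nontrivial precisely in the thick (non-thin) regions and around hole boundaries that the closed-path argument never encounters. The proposal is a sensible research programme, but both of its pillars are at present conjectural, so the statement remains open.
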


	\section{K\H{o}nig type property of closed path polyominoes}\label{Konig}
	
	\noindent Let $R=K[x_1,\dots,x_n]$ and $I$ be a graded ideal in $R$ of height $h$. In according to \cite{Def. Konig type}, we say that $I$ is of \textit{K\H{o}nig type} if there exist a sequence $f_1,\dots, f_h$ of homogeneous polynomials forming part of a minimal system of homogeneous generators of $I$ and a monomial order $<$ on $R$ such that $\lt_<(f_1),\dots,\lt_<(f_h)$ is a regular sequence.

	\noindent If $\cP$ is a polyomino and $I_{\cP}$ is its polyomino ideal, then we say that $\cP$ is of \textit{K\H{o}nig type} if $I_{\cP}$ is an ideal of K\H{o}nig type. In \cite{Hibi - Herzog Konig type polyomino}, it is shown by Herzog and Hibi that the polyomino ideal of a simple thin polyomino is of K\H{o}nig type. However, the polyomino ideal of a thin polyomino is not always of K\H{o}nig type, as shown in the following remark. 

    \begin{rmk}\label{grid_poly}\rm 
Firstly, we provide the following necessary condition in order to $I_{\cP}$ is of K\"onig type. If $\cP$ is a polyomino such that $\mathrm{ht}(I_{\cP})=\rank(\cP)$ and $I_{\cP}$ is of K\H{o}nig type with respect to a monomial order $<$, then $|V(\cP)|\geq 2\rank(\cP)$. In fact, assuming that $\cP$ has $n$ distinct cells, then there exist $n$ generators $f_1, \dots, f_n$ of $I_{\cP}$ such that $\lt_<(f_1), \dots, \lt_<(f_n)$ is a regular sequence. Suppose by contradiction that $|V(\cP)|<2\rank(P)$. Since $|V(\cP)|<2n$ and the initial monomial $\lt_<(f_i)$ is a product of two variables for all $i\in[n]$, there exist $j,k\in[n]$ such that $\lt_<(f_j)$ and $\lt_<(f_k)$ do not have disjoint supports. Hence $\lt_<(f_1), \dots, \lt_<(f_n)$ cannot form a regular sequence, so we get a contradiction and our claim holds. Now, consider the polyomino $\cP_1$ from Figure~ \ref{Figure: example of non Konig type} (A). Using the Algebra software \texttt{Macaulay2} (\cite{Package_M2, macaulay2}) we compute the height of $I_{\cP_1}$, and we obtain that $\mathrm{ht}(I_{\cP_1})=13=\rank(\cP_1)$, as expected. Note that $\vert V(\cP_1)\vert= 24$, so $\vert V(\cP_1)\vert < 2\rank(\cP_1)$. Hence $I_{\cP_1}$ cannot be of K\H{o}nig type. 
\end{rmk}

Closed path polyominoes lie in the class of thin polyominoes but they are more complicated
because they are non-simple. The aim of this section is to show that this class of polyominoes satisfies the K\H{o}nig type property. Motivated by the following remark, we need to define a suitable monomial order to reach this goal.

\begin{rmk}\label{grid_poly_2}\rm
 Consider the closed path $\cP_2$ in Figure \ref{Figure: example of non Konig type} (B) and the lexicographic order $<_{\mathrm{lex}}^Y$ defined in Definition \ref{Defn:order}, where $Y=\{d <^1 c <^1 b <^1 a\}$. In Figure \ref{Figure: example of non Konig type} (B) we figure out the simple graph $G$ where $V(G)=V(\cP_2)$ and $\{u,v\}\in E(G)$ if $x_ux_v\in \mathrm{in}_{<_{\mathrm{lex}}^Y}(I_{\cP_2})$. Observe that $p$ and $q$ are isolated vertices in $G$, that is, $\{p,v\},\{q,v\}\notin E(G)$ for all $v\in V(G)$. The order $<_{\mathrm{lex}}^Y$ is not sufficient to say that $I_{\cP_2}$ is of K\H{o}nig type. In fact, since there does not exist any generator $f$ of $I_{\cP_2}$ such that $x_p$ or $x_q$ divides $\mathrm{in}_{<_{\mathrm{lex}}^Y}(f)$ and, moreover, $\mathrm{ht}(I_{\cP_2})=16=|V(\cP_2)|/2$ due to Lemma \ref{Lemma: Closed path number of vertices and cells} and Corollary \ref{Coro: height of P}, then we cannot find $16$ generators of $I_{\cP_2}$ whose initial terms form a regular sequence, so $I_{\cP_2}$ is not of K\H{o}nig type with respect to $<_{\mathrm{lex}}^Y$.
 
  	\begin{figure}[h!]
  		\centering
  		\subfloat[]{\includegraphics[scale=0.6]{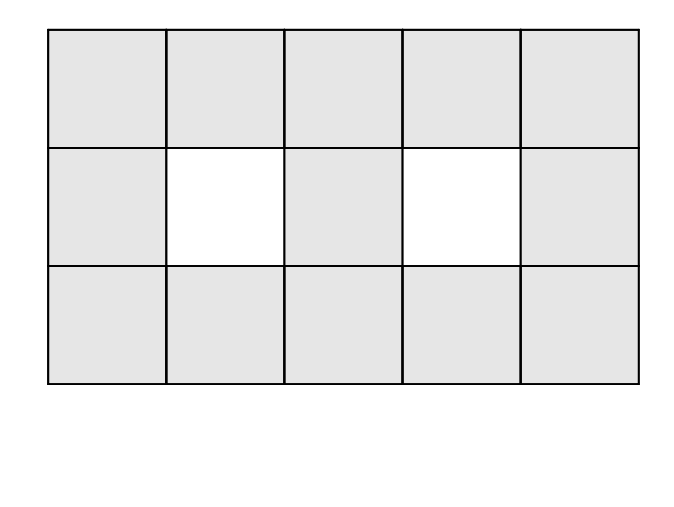}}\qquad\qquad
        \subfloat[]{\includegraphics[scale=0.6]{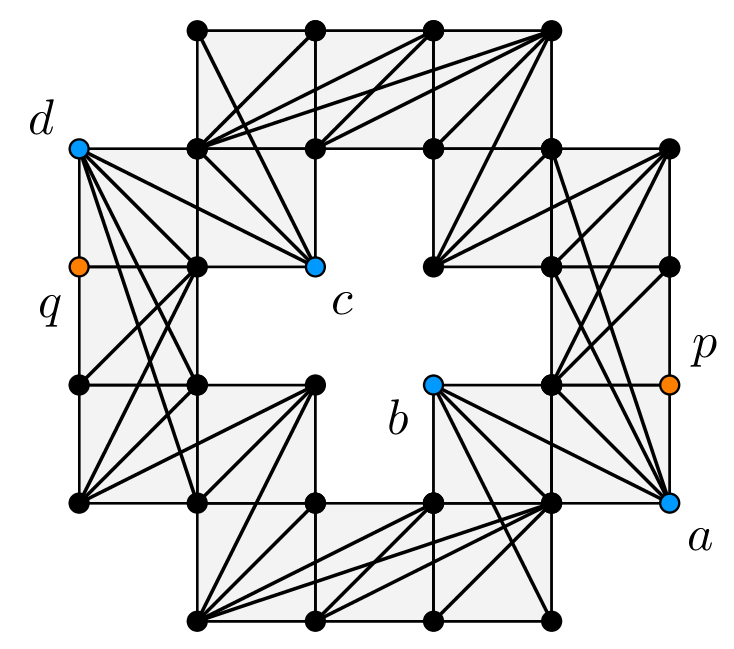}}
  		\caption{Two non-simple and thin polyominoes.}
  		\label{Figure: example of non Konig type}
  	\end{figure}

    \end{rmk}

	\begin{rmk}\rm \label{Remark: For the Konig Type}
	Let $\cP$ be a closed path polyomino having $n$ distinct cells. Let $<_{\mathrm{lex}}$ be the lexicographic order induced by a total order on $\{x_v:v\in V(\cP)\}$. Suppose that there exist $n$ generators $f_1,\dots,f_n$ of $I_{\cP}$ whose initial terms do not have any variable in common. Then, from Corollary~\ref{Coro: height of P}, we know that $\mathrm{ht}(I_\cP)=n$. Moreover, $\gcd(\lt_{<_{\mathrm{lex}}}(f_i),\lt_{<_{\mathrm{lex}}}(f_j))=1$ for all $i\neq j$, so $\lt_{<_{\mathrm{lex}}}(f_1),\dots,\lt_{<_{\mathrm{lex}}}(f_n)$ forms a regular sequence. Hence, $I_{\cP}$ is of K\H{o}nig type.
	\end{rmk}

\begin{table}[h]
    \centering
    \begin{minipage}{0.45\textwidth}
        \centering
        \renewcommand\arraystretch{0.8}
        \begin{tabular}{c|c|c}
            & \textbf{IF} it occurs & \textbf{THEN} we refer to \\
            \hline
            I & \begin{minipage}{0.36\textwidth}
                \includegraphics[scale=0.52]{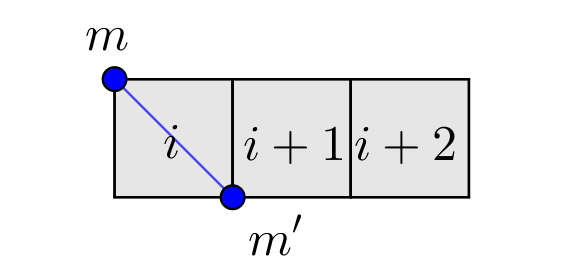}
            \end{minipage} & \begin{minipage}{0.36\textwidth}
                \includegraphics[scale=0.52]{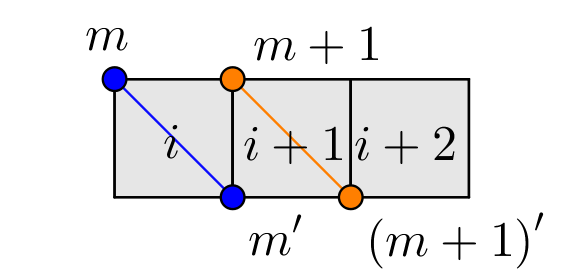}
            \end{minipage} \\
            \hline
            II & \begin{minipage}{0.36\textwidth}
                \includegraphics[scale=0.52]{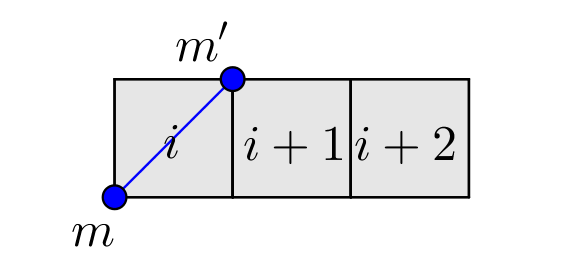}
            \end{minipage} & \begin{minipage}{0.36\textwidth}
                \includegraphics[scale=0.52]{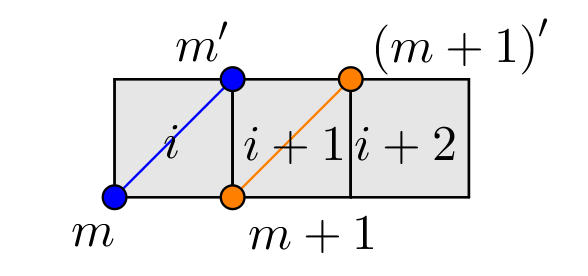}
            \end{minipage} \\
            \hline
            III & \begin{minipage}{0.36\textwidth}
                \includegraphics[scale=0.52]{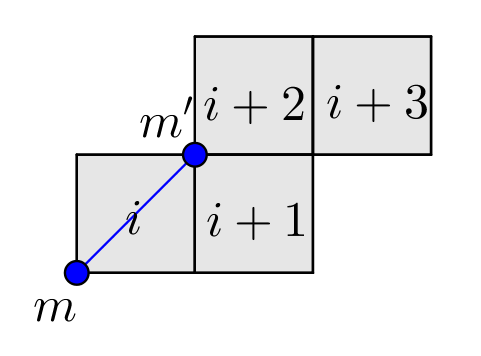}
            \end{minipage} & \begin{minipage}{0.36\textwidth}
                \includegraphics[scale=0.52]{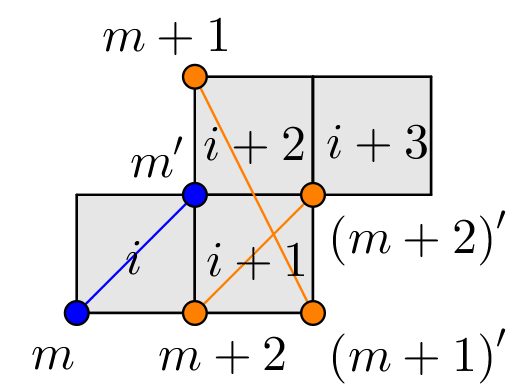}
            \end{minipage} \\
            \hline
            IV & \begin{minipage}{0.36\textwidth}
                \includegraphics[scale=0.52]{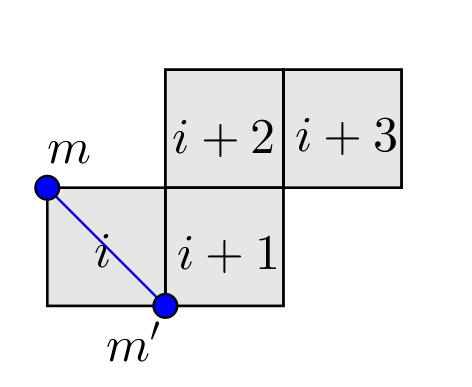}
            \end{minipage} & \begin{minipage}{0.36\textwidth}
                \includegraphics[scale=0.52]{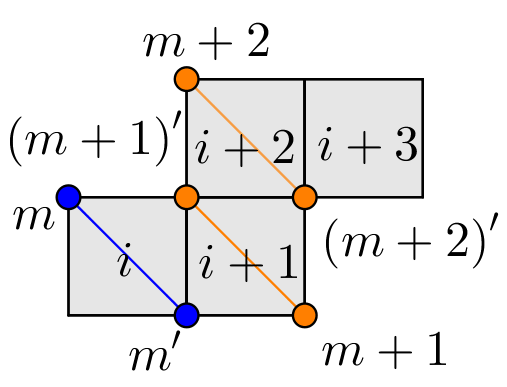}
            \end{minipage} \\
            \hline
            V & \begin{minipage}{0.36\textwidth}
                \includegraphics[scale=0.52]{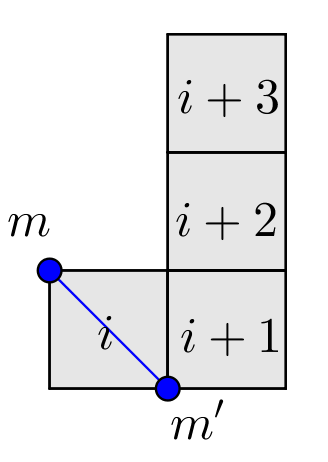}
            \end{minipage} & \begin{minipage}{0.36\textwidth}
                \includegraphics[scale=0.52]{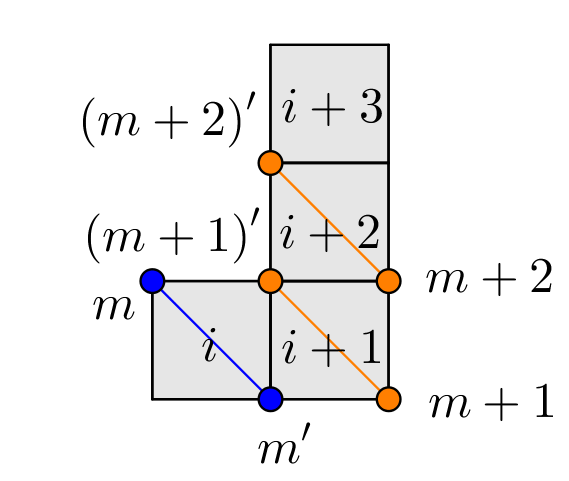}
            \end{minipage} \\
            \hline
            VI & \begin{minipage}{0.36\textwidth}
                \includegraphics[scale=0.52]{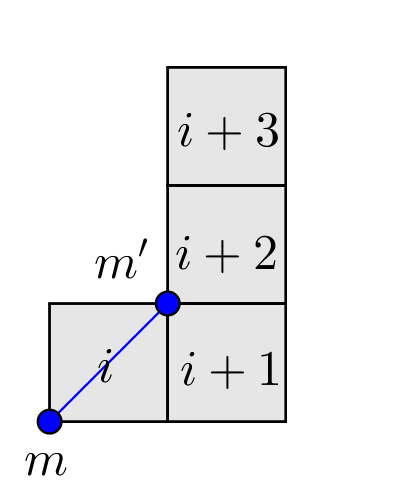}
            \end{minipage} & \begin{minipage}{0.36\textwidth}
                \includegraphics[scale=0.52]{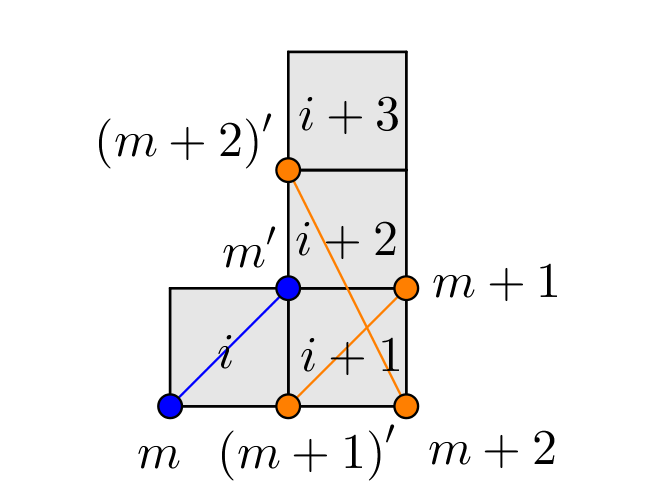}
            \end{minipage} 
        \end{tabular}
    \end{minipage}
    \hfill
    \begin{minipage}{0.48\textwidth}
        \centering
        \renewcommand\arraystretch{0.8}
        \begin{tabular}{c|c|c}
            & \textbf{IF} it occurs & \textbf{THEN} we refer to \\
            \hline
            VII & \begin{minipage}{0.34\textwidth}
                \includegraphics[scale=0.52]{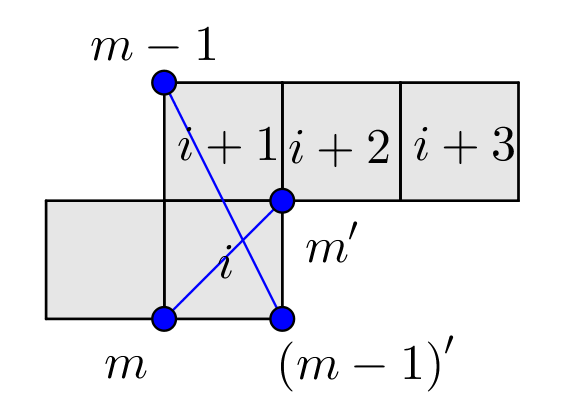}
            \end{minipage} & \begin{minipage}{0.36\textwidth}
                \includegraphics[scale=0.52]{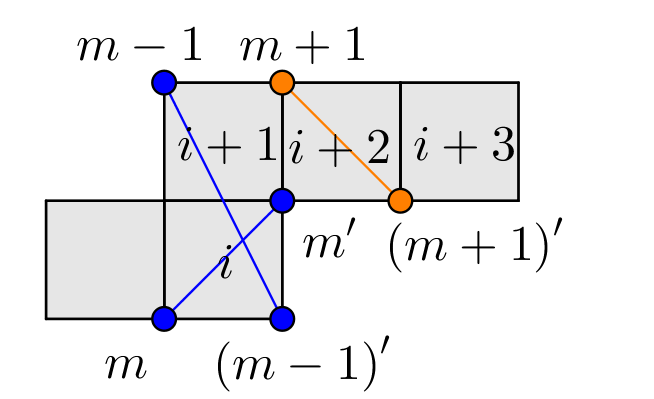}
            \end{minipage} \\
            \hline
            VIII & \begin{minipage}{0.36\textwidth}
                \includegraphics[scale=0.52]{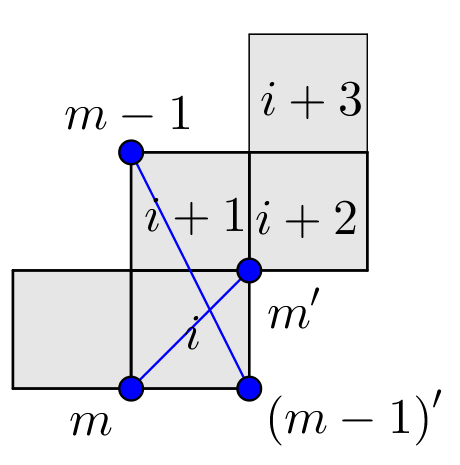}
            \end{minipage} & \begin{minipage}{0.36\textwidth}
                \includegraphics[scale=0.52]{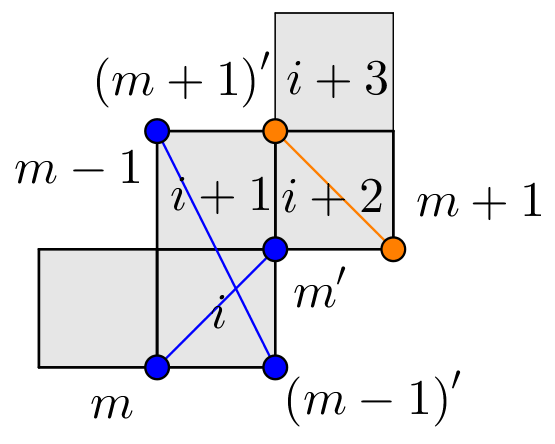}
            \end{minipage} \\
            \hline
            IX & \begin{minipage}{0.36\textwidth}
                \includegraphics[scale=0.52]{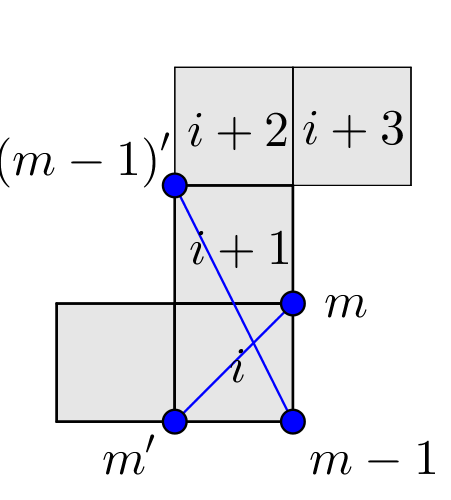}
            \end{minipage} & \begin{minipage}{0.36\textwidth}
                \includegraphics[scale=0.52]{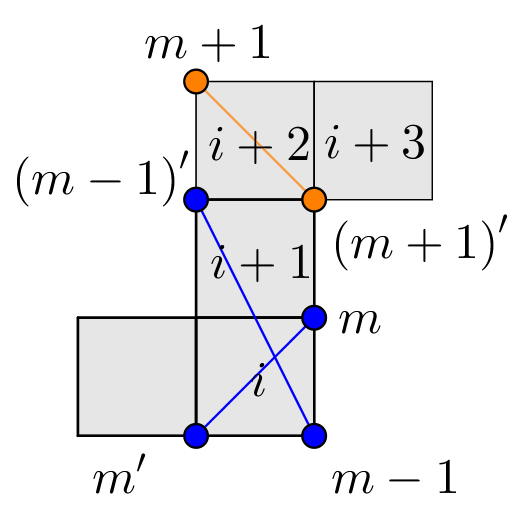}
            \end{minipage} \\
            \hline
            X & \begin{minipage}{0.20\textwidth}
                \includegraphics[scale=0.52]{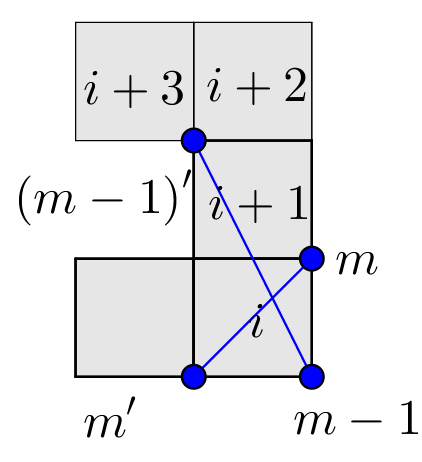}
            \end{minipage} & \begin{minipage}{0.36\textwidth}
                \includegraphics[scale=0.52]{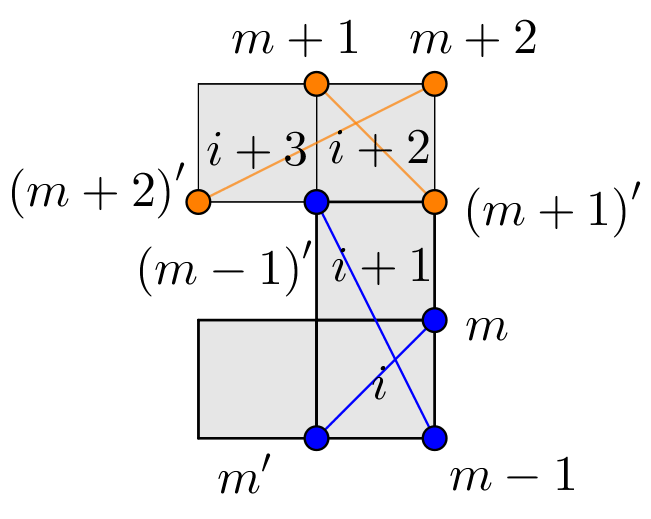}
            \end{minipage} \\
            \hline
        \end{tabular}
    \end{minipage}
    \caption{}
    \label{Table2}
\end{table}

\begin{defn}\rm \label{Procedure: to define Y}
 	Let $\cP:A_1,\dots,A_n$ be a closed path polyomino. In order to define a suitable total order on $\{x_v:v\in V(\cP)\}$, Table~\ref{Table2} will be very useful. Let $Y^{(1)}\subset V(\cP)$. Let $j\geq 2$ and assume that $Y^{(j-1)}$ is known. We want to define $Y^{(j)}$. We refer to Table~\ref{Table2} up to suitable rotations and reflections of $\cP$. If one of the configurations in the left column of Table~\ref{Table2} occurs, where the blue vertices are in $Y^{(j-1)}$, then we denote by $k$ the maximum integer such that $m+k$ is an orange vertex in the picture displayed in the corresponding right column. Hence, we set $Z^{(j)}_1=\{x_{m},\dots,x_{m+k}\}$ and $Z^{(j)}_2=\{x_{m'},\dots,x_{(m+k)'}\}$, where for all $a\in Y^{(j-1)}_1$ we put $x_a>x_{h_1}>x_{h_2}>x_{1'}$ with $m<h_1<h_2\leq m+k$ and for all $b\in Y^{(j-1)}_2$ we put $x_b>x_{t_1}>x_{t_2}$ with $m'<t_1<t_2\leq (m+k)'$. Therefore, we define $Y^{(j)}=Y^{(j)}_1\sqcup Y^{(j)}_2$ where $Y^{(j)}_1=Y^{(j-1)}_1\sqcup Z^{(j)}_1$ and $Y^{(j)}_2=Y^{(j-1)}_2\sqcup Z^{(j)}_2$.
 \end{defn}
	
	\noindent We need to distinguish just two cases depending on the changes of direction in $\cP$, so we have the following two results.

	\begin{prop}\label{Lemma: A closed path with a tetromino is Konig type}
	Let $\cP:A_1,\dots,A_n$ be a closed path polyomino. Suppose that $\cP$ contains a configuration of four cells as in Figure \ref*{Figure: particular tetromino} (A), up to reflections or rotations of $\cP$ or up to relabeling of the cells of $\cP$. Then $I_{\cP}$ is of K\H{o}nig type.
	\end{prop}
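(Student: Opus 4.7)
The plan is to invoke Remark~\ref{Remark: For the Konig Type}: it suffices to exhibit $n=\rank(\cP)$ inner $2$-minors $f_1,\dots,f_n$ of $\cP$ together with a lexicographic order $<_{\mathrm{lex}}$ on $S_\cP$ under which the initial monomials $\lt(f_1),\dots,\lt(f_n)$ are pairwise coprime. Combined with Lemma~\ref{Lemma: Closed path number of vertices and cells}, which gives $|V(\cP)|=2n$, this coprimality requirement is tight: the $n$ leading monomials must realise a perfect matching of $V(\cP)$, with each cell contributing the pair of vertices supporting the initial term of its associated $f_i$.

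\textbf{Construction.} I would apply the procedure of Definition~\ref{Procedure: to define Y} to build, in parallel, the ordered partition $V(\cP)=Y_1\sqcup Y_2$ and the family $\{f_i\}$. The four-cell configuration supplied by the hypothesis serves as the seed: I label its cells, pick four inner intervals among or inside it, and declare their initial monomials so that the eight vertices of the $2\times 2$ region are split cleanly into four pairs. This fixes $Y^{(1)}_1$ and $Y^{(1)}_2$. Then I walk around the closed path starting from this tetromino. At each step the local shape of $\cP$ matches, up to rotation/reflection guaranteed by \cite[Section 6]{Cisto_Navarra_closed_path}, one of the ten templates in Table~\ref{Table2}; the table prescribes how to extend $Y^{(j-1)}$ to $Y^{(j)}$ by appending fresh "orange" runs in the two layers. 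For each new cell $A_{k_j+i}$ introduced by the extension, I take $f_{k_j+i}$ to be the inner $2$-minor of a naturally chosen inner interval whose leading monomial under the lex order so far determined is precisely the pair of newly appended vertices associated to that cell. Freshness of the appended vertices guarantees that the running collection of initial terms remains pairwise coprime, while a verification that each prescribed leading monomial really is the $<_{\mathrm{lex}}$-largest term of its $2$-minor reduces, by the definition of $<^Y_{\mathrm{lex}}$ in Section~\ref{Krull}, to checking in each template of Table~\ref{Table2} that the pair in $Y_1\times Y_2$ beats the alternative pair.

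\textbf{Main obstacle.} The delicate point is closing up the walk: after traversing the whole cycle we return to the tetromino, and we must verify that no vertex there is claimed by two different $\lt(f_i)$ and that none is left unmatched. This is the role played by the four-cell hypothesis, as opposed to the purely $L$-configuration case treated in Proposition~\ref{Lemma: A closed path with a L-conf is Konig type}: the extra inner $2\times 2$ interval of the tetromino offers the two genuinely different perfect matchings of its four central vertices (diagonal versus anti-diagonal), and this flexibility is what absorbs the parity imposed on the central pair by the cyclic walk through the ten templates. I expect the bulk of the proof to be a careful case analysis of which templates of Table~\ref{Table2} can be adjacent in the cyclic sequence, showing in each case that the two possible seed choices at the tetromino make the closure consistent; at that point Remark~\ref{Remark: For the Konig Type} finishes the argument. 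I would conclude with a small explicit example to make the matching transparent.
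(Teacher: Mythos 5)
Your plan is, in its skeleton, exactly the paper's: invoke Remark~\ref{Remark: For the Konig Type} together with Lemma~\ref{Lemma: Closed path number of vertices and cells}, seed the order from the hypothesized four-cell configuration, propagate it around the cycle using Definition~\ref{Procedure: to define Y} and Table~\ref{Table2}, and then verify that the closing step matches up. That much is right.

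There is, however, a genuine conceptual error in the part of your argument that is supposed to carry the actual weight, namely the ``Main obstacle'' paragraph. You describe the four-cell configuration as a $2\times 2$ region whose inner $2\times 2$ interval supplies two alternative perfect matchings of its ``four central vertices.'' But a closed path is thin by definition, so it cannot contain the square tetromino; there is no inner $2\times 2$ interval anywhere in $\cP$. The ``configuration of four cells'' in Figure~\ref{Figure: particular tetromino}~(A) is a non-square tetromino along the path, with ten vertices, not eight, and the flexibility you would need does not come from a diagonal/anti-diagonal choice inside a square. The paper instead gets the needed flexibility by case-splitting on the relative position of $A_3$ to $A_2$ (North versus East), choosing $Y^{(1)}$ of size four or of size two accordingly, and then checking in explicit figures (the analogues of Figures~\ref{Figure: tetromino in the last part CASE 1} and~\ref{Figure:tetromino A_3 is at East of A_2}) that the finitely many ways the procedure can terminate back at $\{A_{n-1},A_n,A_1,A_2\}$ are all consistent. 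Your anticipated ``pairwise template-adjacency analysis'' is not what is actually done; what is done is a check of the start configuration and the finitely many end configurations. So the high-level route is the paper's, but the mechanism you propose for closing the cycle rests on a structure ($2\times 2$ squares) that cannot occur, and would need to be replaced by the explicit initial/terminal case analysis in the paper.
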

	
	\begin{proof}
	We distinguish two cases depending on the position of $A_3$ with respect to $A_2$.
    
     \textsl{Case I:} We assume that $A_3$ is at North of $A_2$. 
        
	\begin{figure}[h]
		\centering
		\subfloat[]{\includegraphics[scale=0.7]{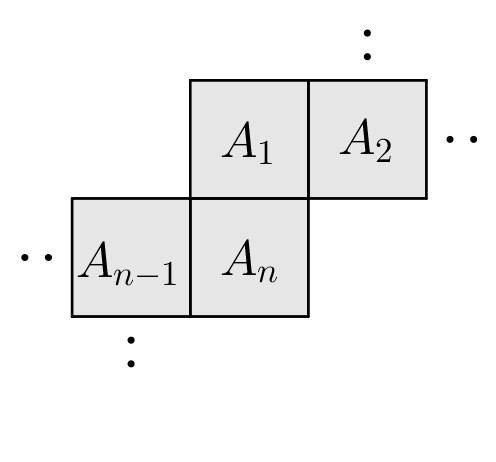}}\qquad
		\subfloat[]{\includegraphics[scale=0.7]{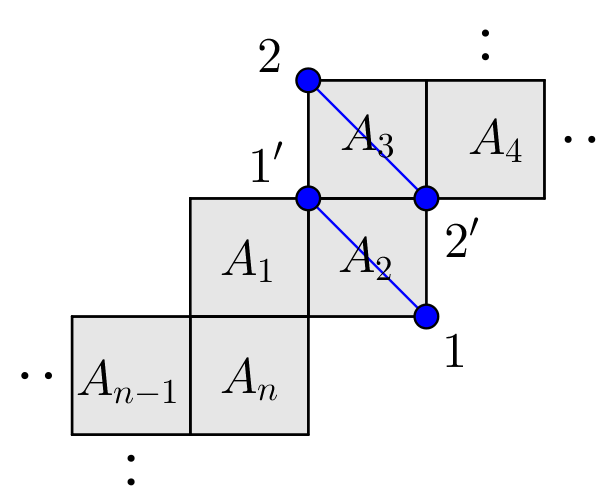}}
		\subfloat[]{\includegraphics[scale=0.7]{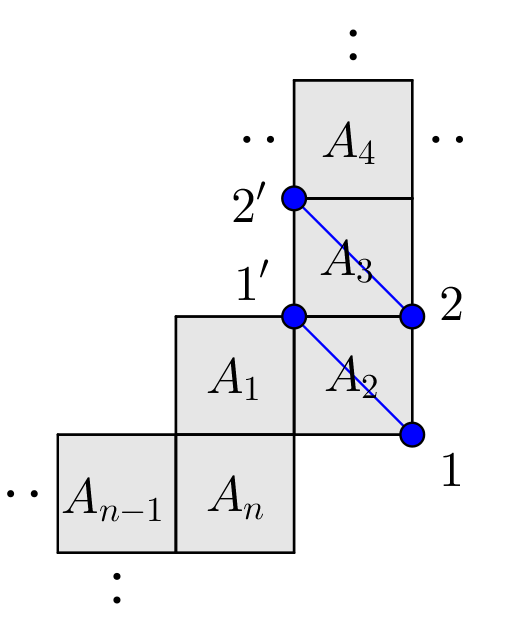}}
		\caption{Arrangements of $A_{n-1}$, $A_n$, $A_1$, $A_2$, $A_3$, $A_4$ in \textsl{Case I} of Proposition \ref{Lemma: A closed path with a tetromino is Konig type}.}
		\label{Figure: particular tetromino}
	\end{figure}

\noindent We set $Y^{(1)}=Y^{(1)}_1 \sqcup Y^{(1)}_2$ where $Y^{(1)}_1=\{x_1,x_2\}$ and $Y^{(1)}_2=\{x_{1'},x_{2'}\}$ with $x_1>x_2>x_{1'}>x_{2'}$, with reference to Figure~\ref{Figure: particular tetromino} (B) if $A_4$ is at East of $A_3$ or to Figure~ \ref{Figure: particular tetromino} (C) if $A_4$ is at North of $A_3$. Starting with this position for $Y^{(1)}$, we apply the procedure described in Definition~\ref{Procedure: to define Y}. 
 
 Since $\cP$ has a finite number of cells and stops in the configuration $\{A_{n-1}, A_n, A_1, A_2\}$, the previous procedure consists of a finite number of steps, let us say $p$ steps. In particular, in Figure~\ref{Figure: tetromino in the last part CASE 1}, we summarize all cases which may appear in the last step, where the blue vertices represent the points which are in $Y^{(p-1)}$ in the penultimate step. Let $Y=Y^{(p)}$ be the order set of variables obtained by using the previous arguments and let $|Y|=2r$ with $r\in \NN$. We have $x_1>x_2>\dots>x_r>x_{1'}>x_{2'}>\dots>x_{r'}$ and we set $Y_1=\{x_1,x_2,\dots,x_r\}$ and $Y_2=\{x_{1'},x_{2'},\dots,x_{r'}\}$. Moreover, observe that all vertices of $\cP$ are covered two by two, so $r=n$ by Lemma~\ref{Lemma: Closed path number of vertices and cells}. Hence, we obtain $n$ generators of $I_{\cP}$ whose initial terms do not have any variables in common, hence, by Remark~\ref*{Remark: For the Konig Type}, it follows that $I_{\cP}$ is of K\H{o}nig type.
	
	\begin{figure}[h]
		\centering
		\subfloat[$a\in Y_2$]{\includegraphics[scale=0.75]{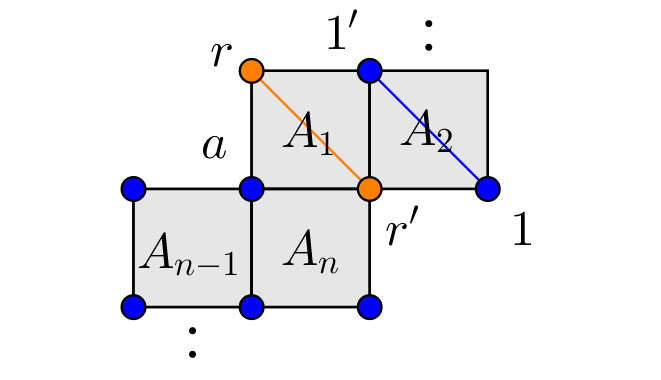}}
		\subfloat[$a\in Y_2$]{\includegraphics[scale=0.75]{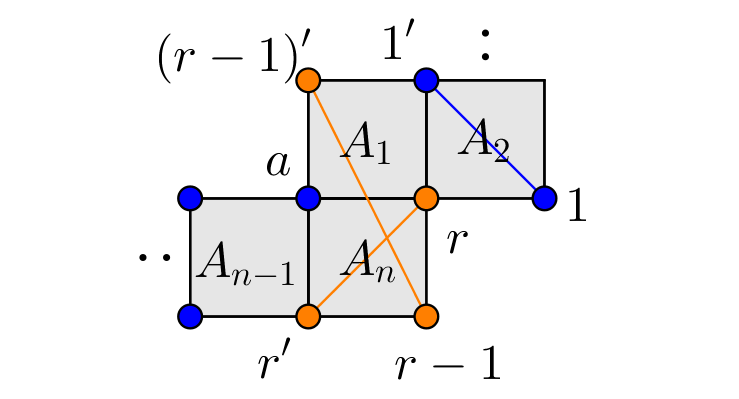}}
		\caption{The last step in \textsl{Case I} of Proposition \ref{Lemma: A closed path with a tetromino is Konig type}.}
		\label{Figure: tetromino in the last part CASE 1}
	\end{figure}

     \textsl{Case II:} We assume that $A_3$ is at East of $A_2$. We set $Y^{(1)}=Y^{(1)}_1 \sqcup Y^{(1)}_2$ where $Y^{(1)}_1=\{x_1\}$ and $Y^{(1)}_2=\{x_{1'}\}$ with $x_1>x_{1'}$, with reference to Figure~ \ref{Figure:tetromino A_3 is at East of A_2} (A). As done before, we start with this position for $Y^{(1)}$ and we apply the procedure described in Definition~\ref{Procedure: to define Y}. Let $q$ be the number of the steps until $\{A_{n-1},A_n,A_1,A_2\}$. In Figure~\ref{Figure:tetromino A_3 is at East of A_2} (A), (B) and (C) we show all cases in the last step and we point out that we set $x_0>x_1$. Hence, with the same arguments as before we get the desired conclusion.
	
	\begin{figure}[h]
		\centering
		\subfloat[]{\includegraphics[scale=0.75]{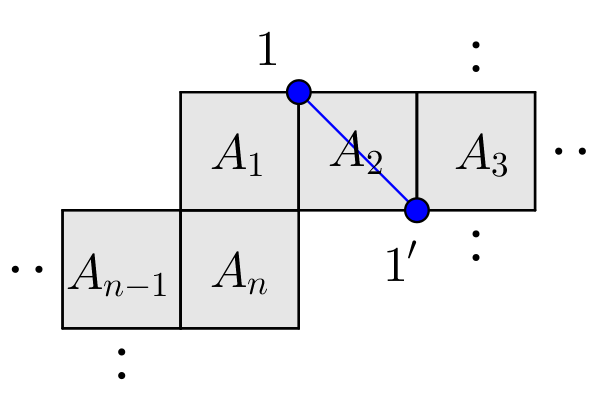}}
		\subfloat[]{\includegraphics[scale=0.75]{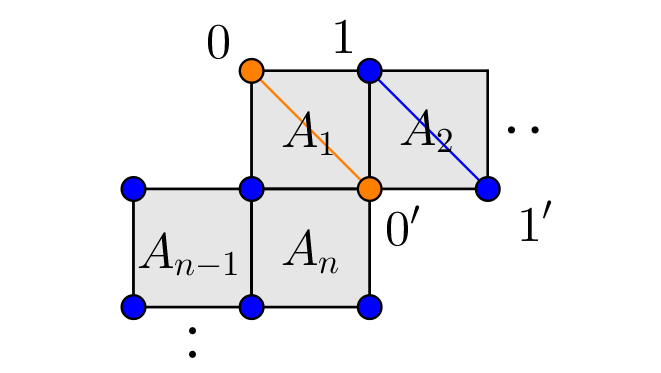}}
		\subfloat[$a\in Y_2$]{\includegraphics[scale=0.75]{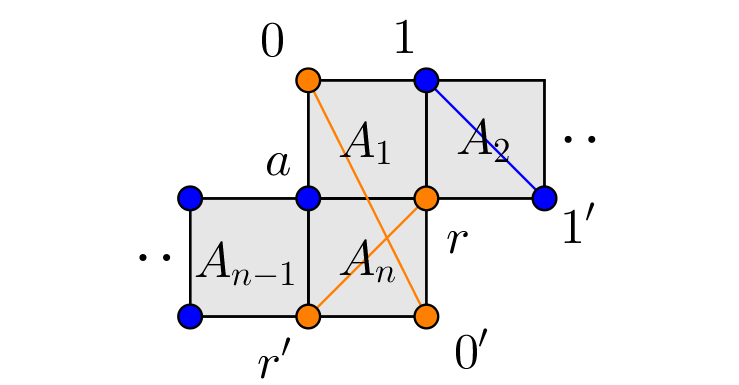}}
		\subfloat[$b\in Y_2$]{\includegraphics[scale=0.75]{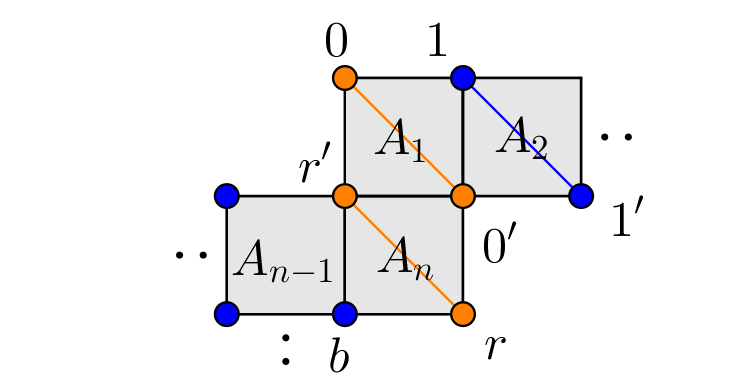}}
		\caption{Arrangements of $A_{n-1}$, $A_n$, $A_1$, $A_2$, $A_3$ in \textsl{Case II} of Proposition \ref{Lemma: A closed path with a tetromino is Konig type}.}
		\label{Figure:tetromino A_3 is at East of A_2}
	\end{figure}

	\end{proof}

	\begin{exa}\rm \label{exa: konig procedure}
	An example of the procedure described in Lemma~\ref{Lemma: A closed path with a tetromino is Konig type} can be seen in Figure~\ref{Figure: example closed path of konig type 1}. In particular, $\cP$ is of K\H{o}nig type with respect to the lexicographic order induced by
	$$ x_1>x_2>\dots>x_{30}>x_{1'}>x_{2'}>\dots>x_{30'}$$
	and to the thirty generators of $I_{\cP}$ corresponding to the inner intervals having $i$ and $i^{\prime}$ as diagonal or anti-diagonal corners, for all $i\in [30]$.
	\begin{figure}[h]
		\centering
		\includegraphics[scale=0.85]{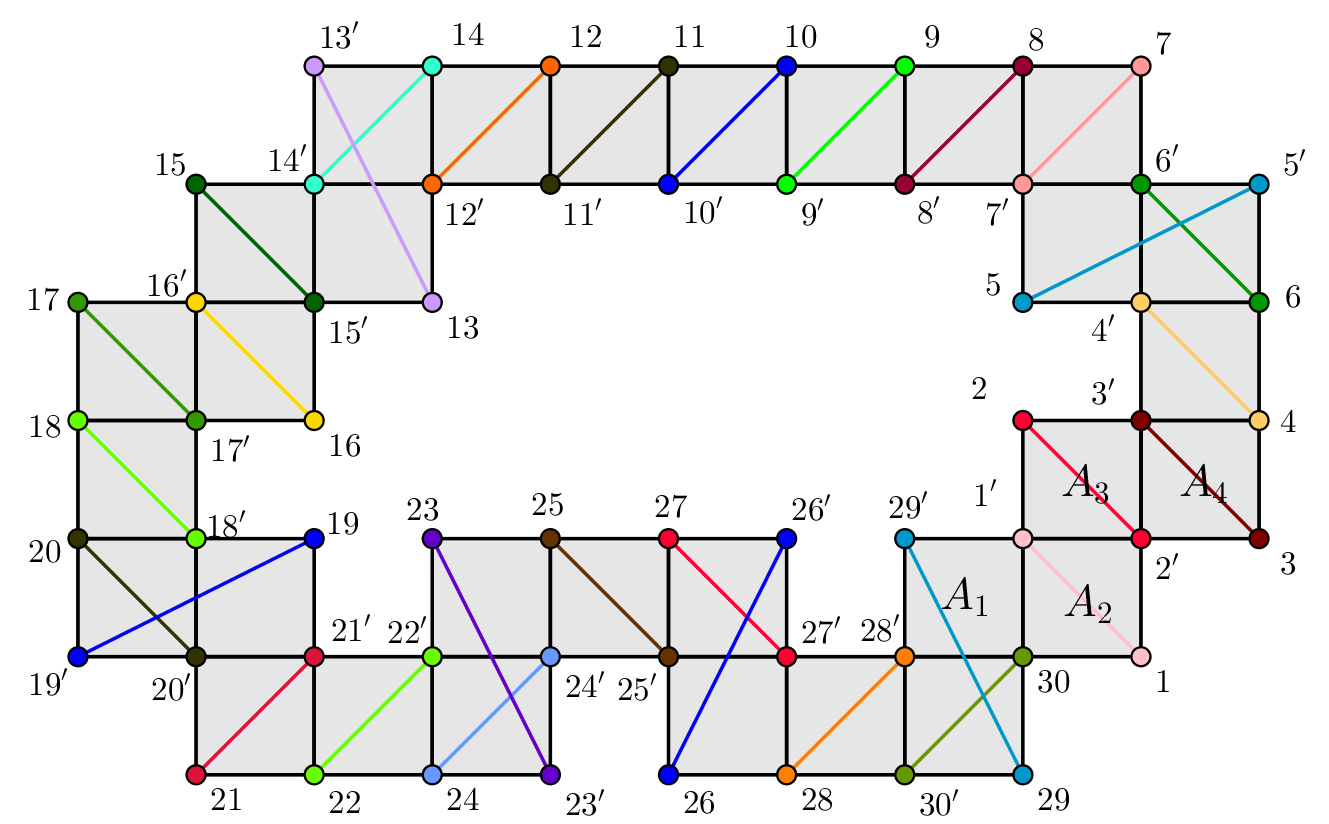}
		\caption{A closed path polyomino and the related total order of the vertices.}
		\label{Figure: example closed path of konig type 1}
	\end{figure}

    \noindent We describe the algorithm given in Proposition~\ref{Lemma: A closed path with a tetromino is Konig type} and we show how it works step by step, with reference to Figure \ref{Figure: example closed path of konig type 1}.\\
    \noindent \textsl{Step 1.} Starting from the tetromino $\{A_1,A_2,A_3,A_4\}$, observe that $A_4$ is at East of $A_3$ so we are in the case of Figure \ref{Figure: particular tetromino} (B). Hence we set $Y^{(1)}=Y^{(1)}_1\sqcup Y^{(1)}_2$ with $Y^{(1)}_1=\{x_1,x_2\}$ and $Y^{(1)}_2=\{x_{1'},x_{2'}\}$, where $x_1>x_2>x_{1'}>x_{2'}$.\\
    \noindent \textsl{Step 2.} Consider now the tetromino $\{A_3,A_4,A_5,A_6\}$, so it occurs the case (V) of Table \ref{Table2}. Hence we set $Y^{(2)}=Y^{(2)}_1\sqcup Y^{(2)}_2$ with $Y^{(2)}_1=Y^{(1)}_1\sqcup Z^{(2)}_1$ where $Z^{(2)}_1=\{x_3,x_4\}$ and $Y^{(2)}_2=Y^{(1)}_2\sqcup Z^{(2)}_2$ where $Z^{(2)}_2=\{x_{3'},x_{4'}\}$, where $x_1>x_2>x_3>x_4>x_{1'}>x_{2'}>x_{3'}>x_{4'}$.\\
    \noindent \textsl{Step 3.} Focusing on $\{A_5,A_6,A_7,A_8\}$, we are in the case (III) of Table \ref{Table2} after suitable rotations of $\cP$. Hence $Z^{(3)}_1=\{x_5,x_6\}$ and $Z^{(2)}_2=\{x_{5'},x_{6'}\}$, where $x_1>\dots>x_4>x_5>x_6>x_{1'}>\dots>x_{4'}>x_{5'}>x_{6'}$.\\
    \noindent \textsl{Step 4.} Take the trimino $\{A_8,A_9,A_{10}\}$, so we have the case (I) of Table \ref{Table2} after a reflection of $\cP$ with respect to $y$-axis. Hence $Z^{(5)}_1=\{x_8\}$ and $Z^{(2)}_2=\{x_{8'}\}$, with $x_1>\dots>x_7>x_8>x_{1'}>\dots>x_{7'}>x_{8'}$.\\
    \noindent \textsl{Steps 5-9.} We can argue as done in the previous step so we obtain $x_1>\dots>x_{11}>x_{12}>x_{1'}>\dots>x_{11'}>x_{12'}$. From this point, it should be clear to the reader how we continue. \\
    \noindent \textsl{Step 10.} Consider $\{A_{13},A_{14},A_{15},A_{16}\}$, so we are in the case (III) of Table \ref{Table2} after suitable rotations of $\cP$. Hence $Z^{(10)}_1=\{x_{13},x_{14}\}$ and $Z^{(10)}_2=\{x_{13'},x_{14'}\}$.\\
    \noindent \textsl{Step 11.} Let $\{A_{14},A_{15},A_{16},A_{17}\}$, so we are in the case (VIII) of Table \ref{Table2} after suitable rotations of $\cP$. Hence $Z^{(11)}_1=\{x_{15}\}$ and $Z^{(11)}_2=\{x_{15'}\}$.\\
    \noindent \textsl{Step 12.} Considering $\{A_{16},A_{17},A_{18},A_{19}\}$, we get the case (IV) of Table \ref{Table2} up to rotations of $\cP$. Hence $Z^{(12)}_1=\{x_{16},x_{17}\}$ and $Z^{(12)}_2=\{x_{16'},x_{17'}\}$.\\
    \noindent \textsl{Step 13.} Get the trimino $\{A_{18},A_{19},A_{20}\}$, so we are in the case (II) of Table \ref{Table2} after suitable rotation of $\cP$. Hence $Z^{(13)}_1=\{x_{18}\}$ and $Z^{(13)}_2=\{x_{18'}\}$.\\
    \noindent \textsl{Step 14.} Take $\{A_{19},A_{20},A_{21},A_{22}\}$, so we have the case (III) of Table \ref{Table2} after a suitable rotation of $\cP$. Hence $Z^{(14)}_1=\{x_{19},x_{20}\}$ and $Z^{(14)}_2=\{x_{19'},x_{20'}\}$.\\
    \noindent \textsl{Step 15.} Focus on the tetromino $\{A_{20},A_{21},A_{22},A_{23}\}$, so we get the case (VIII) of Table \ref{Table2} after a suitable rotation of $\cP$. Hence $Z^{(15)}_1=\{x_{21}\}$ and $Z^{(15)}_2=\{x_{21'}\}$.\\
    \noindent \textsl{Step 16.} Considering the trimino $\{A_{22},A_{23},A_{24}\}$, we are in the case (II) of Table \ref{Table2} and $Z^{(16)}_1=\{x_{22}\}$ and $Z^{(16)}_2=\{x_{22'}\}$.\\
    \noindent \textsl{Step 17.} Get $\{A_{23},A_{24},A_{25},A_{26}\}$, so we are in the case (III) of Table \ref{Table2}. Hence $Z^{(17)}_1=\{x_{23},x_{24}\}$ and $Z^{(17)}_2=\{x_{23'},x_{24'}\}$.\\
    \noindent \textsl{Step 18.} Consider $\{A_{24},A_{25},A_{26},A_{27}\}$, we are in the case (VII) of Table \ref{Table2}. Therefore $Z^{(18)}_1=\{x_{25}\}$ and $Z^{(18)}_2=\{x_{25'}\}$.\\
    \noindent \textsl{Step 19.} Take $\{A_{26},A_{27},A_{28},A_{29}\}$, we are in the case (III) of Table \ref{Table2}, after a reflection with respect to $x$-axis. Hence $Z^{(19)}_1=\{x_{26},x_{27}\}$ and $Z^{(19)}_2=\{x_{26'},x_{27'}\}$.\\
    \noindent \textsl{Step 20.} Consider $\{A_{27},A_{28},A_{29},A_{30}\}$, so we get the case (VII) of Table \ref{Table2}, after a reflection with respect to $x$-axis. Hence $Z^{(20)}_1=\{x_{28}\}$ and $Z^{(20)}_2=\{x_{28'}\}$.\\
    \noindent \textsl{Step 21.} Consider $\{A_{29},A_{30},A_{1},A_{2}\}$, so we are in the case of Figure \ref{Figure: tetromino in the last part CASE 1} (B), or equivalently of (III) in Table \ref{Table2}. Hence $Z^{(21)}_1=\{x_{29},x_{30}\}$ and $Z^{(21)}_2=\{x_{29'},x_{30'}\}$.\\
    In conclusion, we obtain the order set of variables as
    $$ x_1>x_2>\dots>x_{30}>x_{1'}>x_{2'}>\dots>x_{30'}.$$
    \end{exa}
  
	\begin{prop}\label{Lemma: A closed path with a L-conf is Konig type}
	Let $\cP:A_1,\dots,A_n$ be a closed path polyomino. Suppose that $\cP$ has an $L$-configuration in every change of direction. Consider such an $L$-configuration as in Figure~\ref{Figure: L configuration A4 at north of A3} (A), up to relabeling of the cells of $\cP$. Then $I_{\cP}$ is of K\H{o}nig type.
	\end{prop}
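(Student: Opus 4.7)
The plan is to mirror the strategy of Proposition~\ref{Lemma: A closed path with a tetromino is Konig type}: we construct a total order on $\{x_v : v \in V(\cP)\}$ partitioned as $Y = Y_1 \sqcup Y_2$ with $|Y_1| = |Y_2| = n$, so that each cell $A_i$ of $\cP$ gives rise to an inner $2$-minor whose two ``top'' variables are a paired label $x_i \in Y_1$ and $x_{i'} \in Y_2$. Under the induced lex order, the initial monomial of this $2$-minor will be $x_i x_{i'}$, and since these $n$ monomials use $2n = |V(\cP)|$ distinct variables, their supports will be pairwise disjoint. Corollary~\ref{Coro: height of P} gives $\mathrm{ht}(I_\cP) = n$, and Remark~\ref{Remark: For the Konig Type} then yields the K\"onig type property.

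To initialize, we fix the given $L$-configuration $A_1,A_2,A_3,A_4,A_5$, as in Figure~\ref{Figure: L configuration A4 at north of A3}~(A), and branch on the orientation at the change of direction at $A_3$ (that is, whether $A_4$ lies North of $A_3$ or East of $A_3$); each branch will further split according to the direction of the next change, exactly as in Proposition~\ref{Lemma: A closed path with a tetromino is Konig type}. In each subcase, we declare $Y^{(1)} = Y^{(1)}_1 \sqcup Y^{(1)}_2$ by reading off the vertices of the horizontal run $A_1, A_2, A_3$: the pairs $\{x_j, x_{j'}\}$ are chosen to be diagonal or anti-diagonal corners of the first cells, ordered with the $x_j$'s at the top of the chain and the $x_{j'}$'s directly below them.

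Starting from $Y^{(1)}$, we iteratively apply Definition~\ref{Procedure: to define Y}. The standing assumption that every change of direction of $\cP$ is an $L$-configuration forbids the square tetromino from appearing, so after each maximal horizontal or vertical block we reach the hypothesis of one of the ten cases in Table~\ref{Table2}, up to rotation or reflection; the procedure then extends $Y^{(j-1)}$ to $Y^{(j)}$ by appending matched blocks $Z^{(j)}_1$ and $Z^{(j)}_2$ of equal size at the bottom of each chain. After finitely many steps, say $p$, every cell of $\cP$ has been processed, and the total number of labels is $2\,\mathrm{rank}(\cP)$ by Lemma~\ref{Lemma: Closed path number of vertices and cells}.

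The main obstacle is the closing step, where the procedure returns to the initial $L$-configuration. We must check that the local pattern at $\{A_{n-k},\ldots,A_n,A_1,A_2\}$ is compatible with the choices originally made for $Y^{(1)}$, so that no vertex is labelled twice and no vertex is missed. This reduces to a finite case analysis, analogous to Figure~\ref{Figure: tetromino in the last part CASE 1}: since the last change of direction is again an $L$-configuration, only a handful of local pictures can arise, and in each of them the final block $Z^{(p)}_1 \sqcup Z^{(p)}_2$ matches the initial pairs of $Y^{(1)}$. Once this closure is verified, every vertex of $\cP$ is labelled, the $n$ inner $2$-minors associated to the pairs $\{i, i'\}$ give $n$ distinct generators of $I_\cP$ with initial monomials $x_i x_{i'}$, and by Remark~\ref{Remark: For the Konig Type} we conclude that $I_\cP$ is of K\"onig type.
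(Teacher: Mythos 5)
Your overall strategy is the right one and matches the paper: initialize a partition $Y^{(1)}=Y^{(1)}_1\sqcup Y^{(1)}_2$ from the given $L$-configuration, propagate it around $\cP$ via Definition~\ref{Procedure: to define Y} and Table~\ref{Table2}, verify that the procedure closes up consistently at the last step, count $2\,\rank(\cP)$ labelled vertices via Lemma~\ref{Lemma: Closed path number of vertices and cells}, and invoke Corollary~\ref{Coro: height of P} together with Remark~\ref{Remark: For the Konig Type}. That is exactly the skeleton of the paper's argument.

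However, your case split at the start is both incomplete and geometrically inconsistent. You write that you ``branch on the orientation at the change of direction at $A_3$ (that is, whether $A_4$ lies North of $A_3$ or East of $A_3$).'' If the change of direction of the $L$-configuration were at $A_3$ and $A_1,A_2,A_3$ run horizontally, then $A_4$ would have to lie North or South of $A_3$; $A_4$ East of $A_3$ is precisely the situation in which there is \emph{no} change of direction at $A_3$. In fact, in Figure~\ref{Figure: L configuration A4 at north of A3}~(A) the cell $A_4$ is not a priori part of the depicted $L$-configuration, and the paper distinguishes \emph{three} cases depending on the position of $A_4$ relative to $A_3$: North, South, and East. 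Each of the three requires its own initialization — the paper uses $Y^{(1)}_1=\{x_1,x_2\}$, $Y^{(1)}_2=\{x_{1'},x_{2'}\}$ in the North case, but $Y^{(1)}_1=\{x_1\}$, $Y^{(1)}_2=\{x_{1'}\}$ in the South case and $Y^{(1)}_1=\{x_3\}$, $Y^{(1)}_2=\{x_{3'}\}$ in the East case — and its own pair of closing pictures. Your two-branch scheme drops the South case entirely, and there is no symmetry left to reduce it to the North case once Figure~\ref{Figure: L configuration A4 at north of A3}~(A) has fixed the orientation of the $L$-configuration. You would need to treat this third case explicitly, with its own $Y^{(1)}$ and its own closing check, before the proof is complete.
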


	\begin{proof}
	We distinguish three cases depending on the position of $A_4$ with respect to $A_3$. First of all, we assume that $A_4$ is at North of $A_3$. We set $Y^{(1)}=Y^{(1)}_1 \sqcup Y^{(1)}_2$ where $Y^{(1)}_1=\{x_1,x_2\}$ and $Y^{(1)}_2=\{x_{1'},x_{2'}\}$ with $x_1>x_2>x_{1'}>x_{2'}$, with reference to Figure~\ref{Figure: L configuration A4 at north of A3} (A). The procedure described in Definition~\ref{Procedure: to define Y} ends with one of the two cases displayed in Figure~\ref{Figure: L configuration A4 at north of A3} (B) and (C). As done in Proposition~\ref{Lemma: A closed path with a tetromino is Konig type} the desired conclusion follows.
	
	\begin{figure}[h]
		\centering
		\subfloat[]{\includegraphics[scale=0.7]{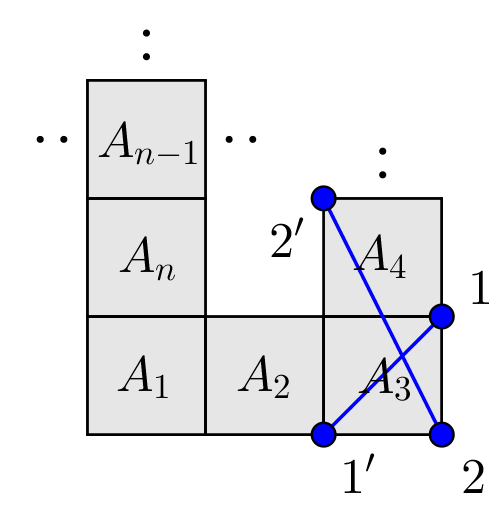}}\qquad
		\subfloat[$a \in Y_2$]{\includegraphics[scale=0.7]{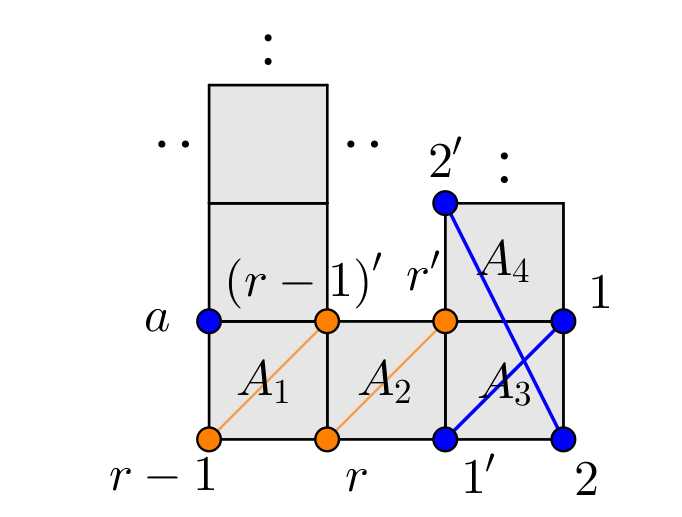}}\qquad
		\subfloat[$b\in Y_2$]{\includegraphics[scale=0.7]{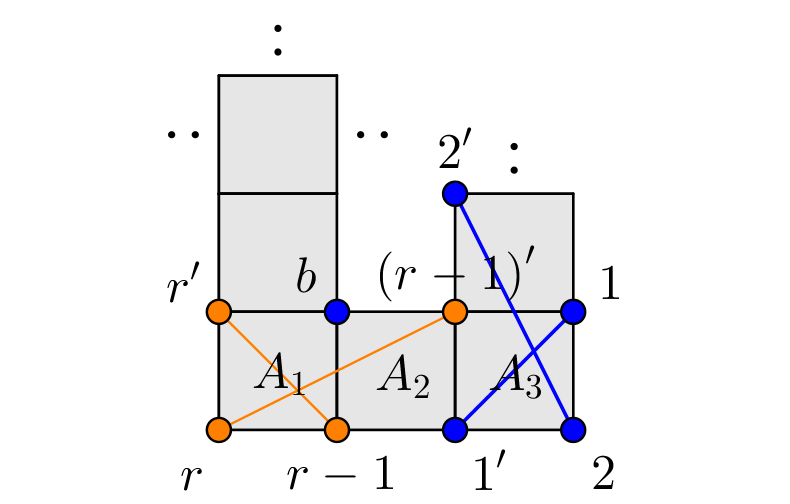}}
		\caption{Case in Proposition \ref{Lemma: A closed path with a L-conf is Konig type} when $A_4$ is at North of $A_3$.}
		\label{Figure: L configuration A4 at north of A3}
	\end{figure}

      We assume that $A_4$ is at South of $A_3$. In such a case we set $Y^{(1)}=Y^{(1)}_1 \sqcup Y^{(1)}_2$ where $Y^{(1)}_1=\{x_1\}$ and $Y^{(1)}_2=\{x_{1'}\}$ with $x_1>x_{1'}$, with reference to Figure \ref{Figure: L configuration A4 at south of A3} (A). Observe that the only two last cases are in Figure \ref{Figure: L configuration A4 at south of A3} (B) and (C).
	
	\begin{figure}[h]
		\centering
		\subfloat[]{\includegraphics[scale=0.7]{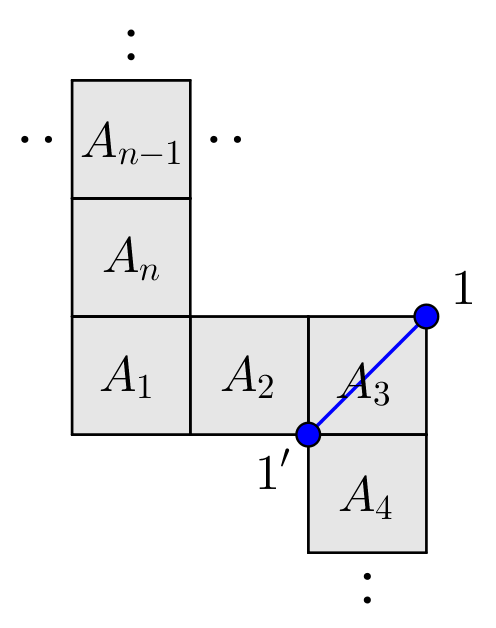}}
		\subfloat[$a\in Y_2$]{\includegraphics[scale=0.7]{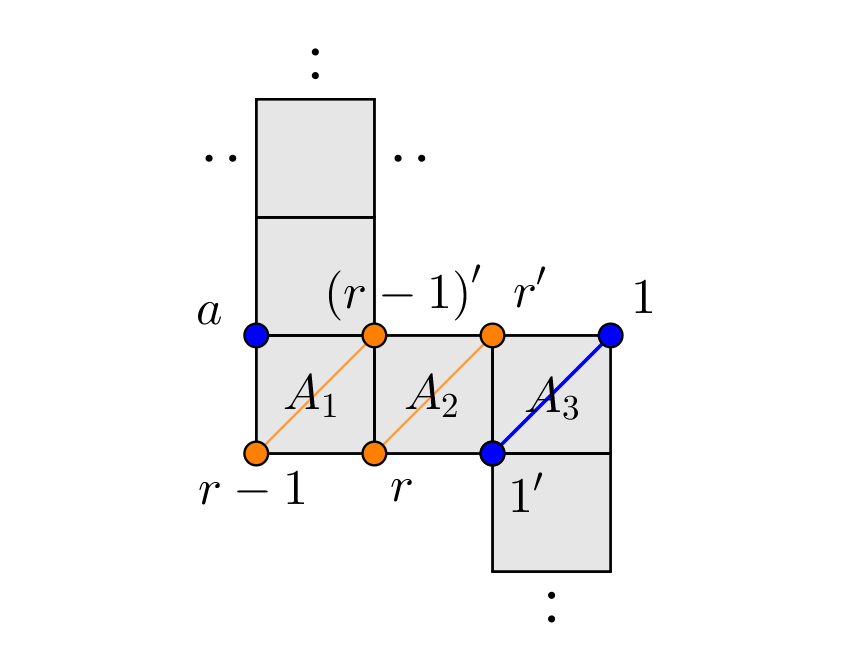}}
		\subfloat[$b\in Y_2$]{\includegraphics[scale=0.7]{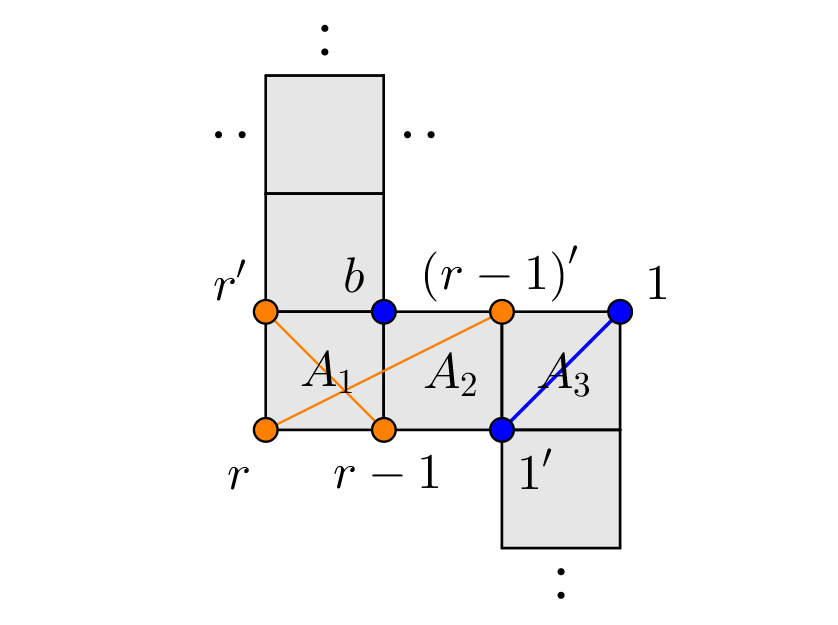}}
		\caption{Case in Proposition \ref{Lemma: A closed path with a L-conf is Konig type} when $A_4$ is at South of $A_3$.}
		\label{Figure: L configuration A4 at south of A3}
	\end{figure}

	  We assume that $A_4$ is at East of $A_3$. In such a case we set $Y^{(1)}=Y^{(1)}_1 \sqcup Y^{(1)}_2$ where $Y^{(1)}_1=\{x_3\}$ and $Y^{(1)}_2=\{x_{3'}\}$ with $x_3>x_{3'}$, with reference to Figure~\ref{Figure: L configuration A4 at east of A3} (A). Observe that the only two last cases are in Figure \ref{Figure: L configuration A4 at east of A3} (B) and (C), where we set:
	$$ x_1>x_2>x_3\dots>x_r>x_{1'}>x_{2'}>x_{3'}\dots>x_{r'}.$$
	The conclusion follows arguing as before.
	\begin{figure}[h]
		\centering
		\subfloat[]{\includegraphics[scale=0.7]{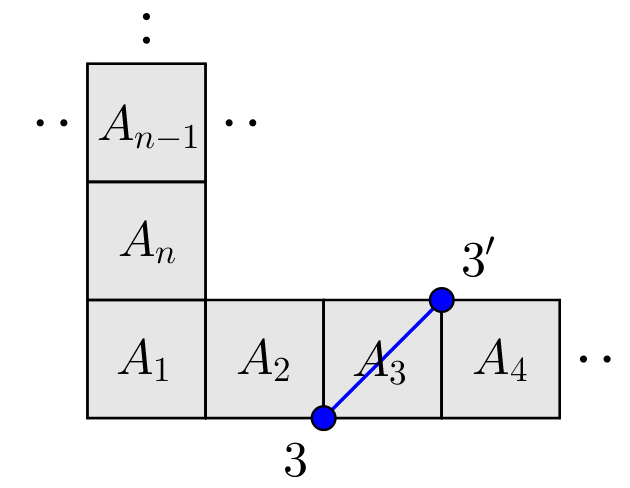}}\quad
		\subfloat[]{\includegraphics[scale=0.7]{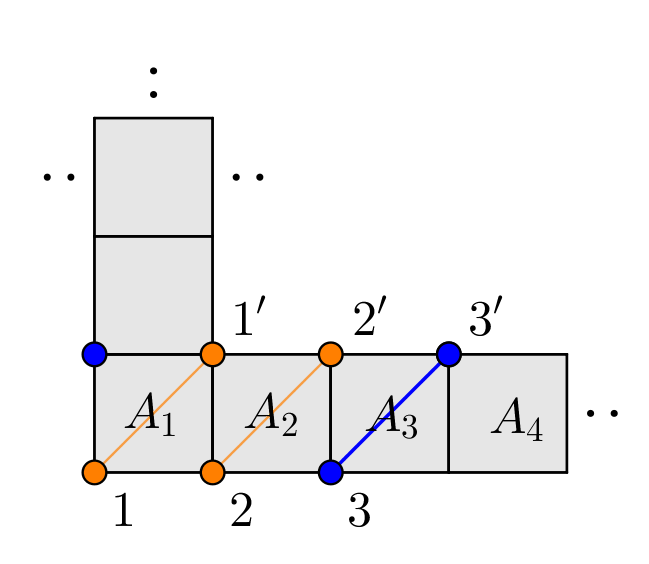}}\quad
		\subfloat[]{\includegraphics[scale=0.7]{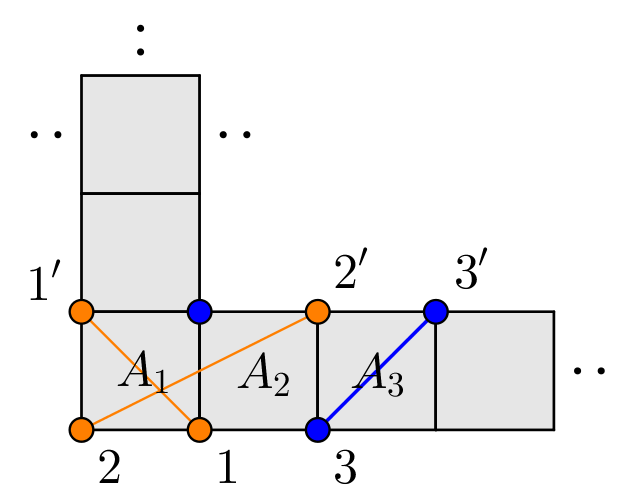}}
		\caption{Case in Proposition \ref{Lemma: A closed path with a L-conf is Konig type} when $A_4$ is at East of $A_3$.}
		\label{Figure: L configuration A4 at east of A3}
	\end{figure} 
	\end{proof}

	\begin{thm}\label{konigfinal}
	Let $\cP$ be a closed path polyomino. Then $I_{\cP}$ is of K\H{o}nig type.
	\end{thm}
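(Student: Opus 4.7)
The plan is to reduce Theorem~\ref{konigfinal} to the two propositions already established, namely Proposition~\ref{Lemma: A closed path with a tetromino is Konig type} and Proposition~\ref{Lemma: A closed path with a L-conf is Konig type}, by proving a structural dichotomy for closed path polyominoes. Concretely, I would show that any closed path $\cP:A_1,\dots,A_n$ satisfies at least one of the following: either (a) $\cP$ contains a configuration of four cells as in Figure~\ref{Figure: particular tetromino}(A), up to rotation, reflection, and relabelling of the cells, or (b) at every change of direction of $\cP$ there is an $L$-configuration. Once this is established, the theorem is immediate: in case (a) I invoke Proposition~\ref{Lemma: A closed path with a tetromino is Konig type}, and in case (b) I invoke Proposition~\ref{Lemma: A closed path with a L-conf is Konig type}.

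The first step is thus to analyse the local structure of $\cP$ at each change of direction. Because $\cP$ is thin, at any corner cell $A_i$ the three consecutive cells $A_{i-1}, A_i, A_{i+1}$ already form an L-shaped triple going in two orthogonal directions. The choice I need to make is how this triple extends on either side. If, for some change of direction, the extension is ``short'' on both sides, then the four cells in a neighbourhood of the corner arrange themselves exactly into the configuration of Figure~\ref{Figure: particular tetromino}(A), up to the allowed symmetries. Otherwise, every change of direction admits the two additional cells needed to form a full $L$-configuration in the sense of Section~\ref{Section: Introduction}, and we are in case (b). This case analysis is short but must be enumerated carefully, distinguishing between the four possible relative orientations of $A_{i-1}, A_i, A_{i+1}, A_{i+2}$ and excluding the forbidden $2\times 2$ square.

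The second (essentially bookkeeping) step is to observe that in each of the propositions cited, the construction produces an explicit lexicographic order on $\{x_v : v\in V(\cP)\}$ via Definition~\ref{Procedure: to define Y}, together with $\rank(\cP)$ generators of $I_{\cP}$ whose initial monomials are pairwise coprime. Since $\mathrm{ht}(I_{\cP}) = \rank(\cP)$ by Corollary~\ref{Coro: height of P} and $|V(\cP)| = 2\rank(\cP)$ by Lemma~\ref{Lemma: Closed path number of vertices and cells}, Remark~\ref{Remark: For the Konig Type} then gives exactly the K\"onig type property.

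The main (and really only) obstacle I foresee is making the dichotomy airtight: one must check that no exotic local shape at a corner escapes both cases. I would address this by going through the possible neighbourhoods of a change of direction cell $A_i$ one by one, using the axiom that $V(A_i)\cap V(A_j) = \emptyset$ whenever $j \notin \{i-2, i-1, i, i+1, i+2\}$, which rules out cells from distant portions of the path interfering with the local picture, and using the characterisation recalled in Section~\ref{Section: Introduction} that closed paths without zig-zag walks contain either an $L$-configuration or a ladder of at least three steps. Beyond this case analysis, the rest of the proof is essentially a citation of the two propositions, so no further technical work is needed.
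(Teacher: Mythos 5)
Your proposal is correct and follows essentially the same route as the paper: the published proof is exactly the dichotomy you describe (either $\cP$ contains the tetromino configuration of Figure~\ref{Figure: particular tetromino}(A), or else it has an $L$-configuration at every change of direction), followed by citing Propositions~\ref{Lemma: A closed path with a tetromino is Konig type} and~\ref{Lemma: A closed path with a L-conf is Konig type} respectively. The only difference is cosmetic: the paper asserts the dichotomy with ``it is easy to see,'' whereas you propose to spell out the local case analysis at a corner cell, which is a reasonable elaboration but not a different argument.
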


	\begin{proof}
	If $\cP$ contains a configuration of four cells as in Figure~\ref*{Figure: particular tetromino} (A), then $I_{\cP}$ is of K\H{o}nig type by Proposition~\ref{Lemma: A closed path with a tetromino is Konig type}. If $\cP$ does not contain any such configuration, then it is easy to see that $\cP$ has an $L$-configuration in every change of direction, so $I_{\cP}$ is of K\H{o}nig type by Proposition~\ref{Lemma: A closed path with a L-conf is Konig type}. Hence, the desired claim is proved.
	\end{proof}

  \noindent \textbf{Question.} In general, it is not so easy to find classes of graded ideals that are of K\H{o}nig type. The results obtained in this section could initiate further open questions. For example, it is natural to ask if there are other classes of non-simple polyominoes enclosing more than one hole that are of K\H{o}nig type and if they can be characterized by using some combinatorial properties.

\section{A combinatorial interpretation of the canonical module of a circle closed path}\label{canonical}

\noindent As an application of the K\H{o}nig type property, in this section we present a sub-class of closed path polyominoes, for which the canonical module has a very nice combinatorial description; refer to \cite[Section 3.3]{Bruns_Herzog} for more details on the canonical module of a Cohen-Macaulay ring. Let us start by introducing some definitions and notations.

\begin{defn}\rm \label{circle}
    Let $I_1=[(1,1),(m,n)]$ and $I_2=[(2,2),(m-1,n-1)]$, where $m,n\geq 4$ and $(m,n)\neq (4,4)$. We say that a closed path $\cP$ is a \textit{circle} if $\cP=\cP_{I_1}\setminus \cP_{I_2}$. 
\end{defn}

\noindent We require that $m$ and $n$ are not equal to four at the same time, because in such a case $\cP$ consists of maximal blocks of length three so $K[\cP]$ is Gorenstein by \cite[Theorem 5.7]{Cisto_Navarra_Hilbert_series} and we know that the canonical module of $K[\cP]$ is isomorphic to $K[\cP]$ (see \cite[Theorem 3.3.7]{Bruns_Herzog}. \\

\noindent In the description of the canonical module of a circle closed path, the binomial ideal coming from the K\H{o}nig type property and a suitable monomial ideal will play a crucial role. We set the following.

\begin{defn}\rm \label{Defn: Ideals K and J for the canonical module}
\noindent We introduce two ideals to describe the canonical module of a circle closed path.\\
Let $\cP$ be a closed path. With reference to Proposition~\ref{Lemma: A closed path with a L-conf is Konig type}, we denote by $J(\cP)$ the binomial ideal generated by the generators of $I_{\cP}$ corresponding to the inner intervals of $\cP$ having $i$ and $i'$ as diagonal or anti-diagonal corners, for all $i\in [\mathrm{rank}(\cP)]$.\\ 
Now, let $\cP$ be a circle closed path. We introduce the following sets of vertices of $\cP$. Let $m,n>4$.

\begin{itemize}
    \item $L_{j-2}=\{(1,j),(2,j)\}$ and $R_{j-2}=\{(m-1,j),(m,j)\}$ for all $j\in [n-2]\setminus \{1,2\}$;
    \item $U_{k-2}=\{(k,n),(k,n-1)\}$ and $D_{k-2}=\{(k,1),(k,2)\}$ for all $k\in [m-2]\setminus \{1,2\}$.
\end{itemize}

\noindent In addition, we set the following Cartesian product.

\begin{enumerate}
    \item If $m,n>4$, then
    $$C=\Bigg(\bigtimes_{i=1}^{n-4} L_i \Bigg) \times \Bigg(\bigtimes_{i=1}^{n-4} R_i \Bigg)\times \Bigg(\bigtimes_{i=1}^{m-4} U_i \Bigg) \times \Bigg(\bigtimes_{i=1}^{m-4} D_i \Bigg).$$
    Set $t=2(n+m-8)$ in such a case.
    \item If $m>4$ and $n=4$, then
    $$ C=\Bigg(\bigtimes_{i=1}^{m-4} U_i \Bigg) \times \Bigg(\bigtimes_{i=1}^{m-4} D_i \Bigg).$$
    Let $t=2(m-4)$ in this case.
    \item If $m=4$ and $n>4$, then
    $$ C=\Bigg(\bigtimes_{i=1}^{n-4} L_i \Bigg) \times \Bigg(\bigtimes_{i=1}^{n-4} R_i \Bigg).$$
    Set $t=2(n-4)$ here.
\end{enumerate}

\noindent Now, we define a suitable subset of $C$. 
$$\mathcal{V}=\big\{ (v_1,\dots,v_t)\in C: v_i,v_j \text{ are not the diagonal corners of an inner interval of }\cP,  \text{ for }i,j\in[t] \big\}.$$

\noindent For all $\mathbf{v}=(v_1,\dots,v_t)\in \cV$, we define a monomial in $S_{\cP}$ as $$\mathbf{x}_{\mathbf{v}}=x_{v_1}x_{v_2}\dots x_{v_t}x_{(2,2)}x_{(1,n)}x_{(m,1)}x_{(m-1,n-1)},$$ and we denote by $K(\cP)$ the monomial ideal in $S_{\cP}$ generated by $\mathbf{x}_{\mathbf{v}}$ for all $\mathbf{v}\in \cV.$ 
\end{defn}

\begin{rmk}\rm\label{Rmk: number generators K(P)}
    Let $\cP$ be a circle closed path and $K(\cP)$ be the ideal defined in Definition \ref{Defn: Ideals K and J for the canonical module}. Observe that $K(\cP)$ is a squarefree monomial ideal generated in degree $t+4$. In addition, let us denote by $\mu(K(\cP))$ the number of monomial generators of $K(\cP)$. We have that $\mu(K(\cP))=(m-3)^2(n-3)^2$.\\
   \noindent We will assume that $m,n>4$. Consider $L=L_1\times\dots\times L_{n-4}$ and $w_0=\big((1,3),(1,4),\dots,(1,m-2)\big)\in L.$ All the other $(m-4)$-uple of $L$, where the vertices are not the diagonal corners of an inner interval of $\cP$, can be obtained replacing in $w_i$ in the $i$-th component $(i,1)$ with $(i,2)$, for all $i\in[m-4]$. Hence, the number of such configurations is $m-3$. As a consequence of the symmetric structure of $\cP$, we have that $\mu(K(\cP))=(m-3)^2(n-3)^2$. The same arguments hold in the other cases when $m=4, n>4$ and $m>4, n=4$.
\end{rmk}

We now state the main theorem of this section:
\begin{thm}\label{Thm: description canonical module}
    Let $\cP$ be a circle closed path. We denote by $\omega_{K[\cP]}$ the canonical module of $K[\cP]$. Then:
    $$\omega_{K[\cP]}\cong \bigslant{(J(\cP)+K(\cP))}{J(\cP)} $$
    where $J(\cP)$ and $K(\cP)$ are the ideals given in Definition \ref{Defn: Ideals K and J for the canonical module}.
\end{thm}

\noindent In order to prove Theorem \ref{Thm: description canonical module}, we recall two results that will be useful for our purpose.

\begin{prop}[\cite{P_S_Canonical_module}]\label{prop5.5}
  Let $I\subset S=K[x_1,\dots,x_n]$ be a graded ideal such that $S/I$ is a Cohen-Macaulay ring and $J\subset I$ be a complete intersection ideal with
$\mathrm{ht}(I) = \mathrm{ht}(J)$. Denote $\omega_{S/I}$ the canonical module of $S/I$. Then $\omega_{S/I}\cong (J : I)/J.$  
\label{Prop: canonical module from linkage theory}
\end{prop}

\begin{prop}\cite[Lemma 1.12]{Def. Konig type}\label{prop5.6}
     Let $I, J\subset S=K[x_1,\dots,x_n]$ be two ideals with $\mathrm{ht}(J) = \mathrm{ht} (I)$. Suppose that $J$ is a complete intersection and a radical ideal. Denote by $\mathcal{M}(I)$ and $\mathcal{M}(J)$ the sets of the minimal primes of $I$ and $J$ respectively. Then $$J : I = \bigcap_{\mathfrak{p} \in \cM(J)\setminus \cM(I) }\mathfrak{p}.$$
     \label{Prop: J:I = intersection minimal primes}
\end{prop}

\noindent The strategy of the proof consists in showing that the ideal $J(\cP)$, introduced in Definition \ref{Defn: Ideals K and J for the canonical module}, satisfies the assumptions of Proposition~\ref{prop5.5} and Proposition~\ref{prop5.6}, and then, by studying the minimal primes of $J(\cP)$, we prove that $J(\cP): I_{\cP}$ is $J(\cP)+K(\cP)$.

\begin{lemma}\label{Lemma: property of J(P) compared to I_P}
    Let $\cP$ be a closed path. Then $J(\cP)$ is contained in $I_{\cP}$ and is a complete intersection and radical ideal with $\mathrm{ht}(J(\cP))=\mathrm{ht}(I_{\cP})=\vert\cP\vert.$ 
\end{lemma}

\begin{proof}
    By Proposition~\ref{Lemma: A closed path with a L-conf is Konig type} we know that $I_{\cP}$ is of K\H{o}nig type, so there exist a monomial order $<$ on $S_{\cP}$ and $h=\vert \cP\vert$ generators $f_1,\dots,f_h$ of $I_{\cP}$ such that $\lt_{<}(f_1),\dots,\lt_{<}(f_h)$ forms a regular sequence. By Definition~\ref{Defn: Ideals K and J for the canonical module}, $J(\cP)$ is generated by $f_1,\dots,f_h$. We have obviously $J(\cP)\subset I_{\cP}$. Since $\lt_{<}(f_1),\dots,\lt_{<}(f_h)$ forms a regular sequence, then it is easy to see that $f_1,\dots,f_h$ is a regular sequence of $J(\cP)$, so $J(\cP)$ is a complete intersection. Now, since $\lt_{<}(f_1),\dots,\lt_{<}(f_h)$ is a regular sequence, we have that $\mathrm{ht}(J(\cP))=h$, which is equal to $\mathrm{ht}(I_\cP)$ by Corollary~\ref{Coro: height of P}, and moreover $f_1,\dots,f_h$ is a Gr\"obner basis with respect to $<$ of $J(\cP)$. Therefore, the initial ideal of $J(\cP)$ with respect to $<$ is squarefree and, thus, $J(\cP)$ is radical. 
\end{proof}

\noindent In order to study the minimal prime ideals of $J(\cP)$, we introduce the following notation and, in particular, a suitable definition of \textit{admissible set} of $V(\cP)$, inspired by \cite{Cisto_Navarra_Veer} and \cite{Herzog_Hibi_ad}. Let $f=x_ax_b-x_cx_d$ be a generator of $J(\cP)$, attached to the inner interval $[a,b]$ of $\cP$, with $c,d$ as anti-diagonal corners. We define $V(f)=\{a,b,c,d\}$ and $E(f)=\big\{ \{a,c\},\{a,d\},\{b,c\},\{b,d\} \big\}$ as the sets of vertices and edges of $f$, respectively.

\begin{defn}\label{Defn: aadmissible set}\rm\label{admissible}
Let $\cP$ be a circle closed path and $\cA\subset V(\cP)$. We say that $\cA$ is an \textit{admissible set} for $J(\cP)$ if:
\begin{enumerate}
    \item For each generator $f$ of $J(\cP)$, one of the following two conditions is satisfied:
\begin{enumerate}
    \item $V(f)\cap \cA=\emptyset$;
    \item $V(f)\cap \cA$ contains at least an edge of $f$ and $V(f)\cap \cA\neq V(f)$.
\end{enumerate}  
    \item Denote by $F_{\cA}$ the set of generators $f$ of $J(\cP)$ such that $V(f)\cap \cA=\emptyset$. Then $\vert F_{\cA}\vert +\vert \cA\vert=\vert \cP\vert.$ 
\end{enumerate}
\end{defn}

\begin{exa}\rm \label{Exa: admissible sets}
    \noindent In Figure~\ref{Figure: example minimial primes}: $\{(3,4),(3,5)\}$, $\{(4,1),(4,2),(1,3),(2,3)\}$, $\{(5,4),(6,4),(6,5)\}$, $\{(2,1),(2,2),(2,3),(2,4),(2,5)\}$ and $\{(6,2),(6,3),(6,4),(6,5),(5,2),(5,1),(3,2),(3,1)\}$ are examples of admissible sets for $J(\cP)$. The following ones are not admissible sets for $J(\cP)$:
    \begin{itemize}
        \item $\cA_1=\{(4,4),(6,4),(6,5)\}$, since $V(x_{(5,3)}x_{(6,4)}-x_{(5,4)}x_{(6,3)})\cap\cA_1=\{a\}$;
        \item $\cA_2=\{(4,4),(6,4),(6,5),(4,5),(3,1),(3,2)\}$, since $V(x_{(4,4)}x_{(6,5)}-x_{(4,5)}x_{(6,4)})\cap\cA_2=\{(4,4),(6,5),(4,5),(6,4)\}=V(x_{(4,4)}x_{(6,5)}-x_{(4,5)}x_{(6,4)})$.
        \item $\cA_3=\{(5,1),(5,2),(5,3),(5,4),(5,5),(6,2)\}$, since $\cA_3$ contains $\{(5,1),(5,2),(5,3),(5,4),(5,5)\}$, which is an admissible set for $J(\cP)$, so condition (2) cannot be satisfied.
        \item $\cA_4=\{(6,1),(6,2),(6,3),(6,4),(6,5),(5,2),(4,2),(3,2),(3,1)\}$, since $\vert F_{\cA_4}\vert + \vert\cA_4\vert=15\neq 14=\vert \cP\vert.$
    \end{itemize}
    
\end{exa}

\noindent Following \cite{Cisto_Navarra_Veer}, we recall the definition of a polyocollection. Firstly, if $I=[a,b]$ is an interval of $\Z^2$ with anti-diagonal corners $c,d$, we put $V(I):=\{a,b,c,d\}$ and $E(I):=\big\{[a,c],[a,d],[c,b],[d,b]\big\}.$ Let $\cC$ be a collection of intervals in $\Z^2$. We say that $\cC$ is a \textit{polyocollection} if for all $I, J \in \cC$, with $I \neq J$, we have that $I$ is not contained in $J$ and one of the following holds:
\begin{enumerate}
    \item $I \cap J$ is a common edge of $I$ and $J$;
    \item For all $F \in E(I)$ and for all $G \in E(J)$, $\vert F \cap G\vert \leq 1$.
\end{enumerate}
\noindent Observe that every collection of cells is a polyocollection but the converse is not true. Moreover, if $\cC$ is a polyocollection, then we can define a polynomial ring $S_{\cC}$ and a binomial ideal $I_\cC\subset S_{\cC}$ as done in \cite{Cisto_Navarra_Veer} generalizing in a natural way the definition of a polyomino ideal given in \cite{Qureshi}.

\begin{discussion}\rm\label{Disc: for an admissible set there is a polyocollection}
    Let $\cP$ be a circle closed path polyomino, $J(\cP)$ be the ideal as in Definition~\ref{Defn: Ideals K and J for the canonical module} and $\cA$ be an admissible set for $J(\cP)$. Remember that $F_{\cA}$ is the set of the generators $f$ of $J(\cP)$ such that $V(f)\cap \cA=\emptyset$ and $\vert F_{\cA}\vert +\vert \cA\vert=\vert \cP\vert$. Now, consider 
    $$\cC_{\cA}=\{I\ \text{is an interval of}\ \Z^2:V(I)=V(f)\ \text{for some}\ f\in F_{\cA}\}.$$ 
    Obviously $\cC_{\cA}$ is a polyocollection. We now discuss various aspects of $\cC_{\cA}$. Consider the configuration in Figure \ref{Figure: tetromino} up to rotations, which means that $(a,b)$ can be $\big((2,2),(2,1)\big)$, $\big((m-1,2),(m,2)\big)$, $\big((m-1,n-1),(m-1,n)\big)$ or $\big((2,n),(2,n-1)\big)$. Moreover, we point out that if the horizontal cell interval containing $[h,a]$ and $[c,b]$ has only three cells, then $h$ is $c-(2,0)$ in that case. However, without loss of generality, we may assume that $a=(m-1,2)$ and $b=(m,2)$, and we discuss the vertices of $\cA$ which are in $[c,(m,n)]\cup\{g,h\}$, since $\cP$ consists of this type of configuration. Consider the following two cases.  

    \begin{figure}[h]
		\centering
		\includegraphics[scale=0.6]{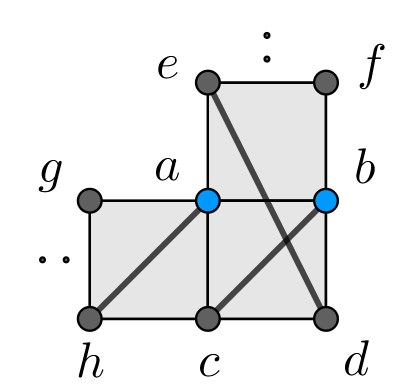}
		\caption{A trimino.}
		\label{Figure: tetromino}
	\end{figure}

\noindent Case 1) Let us suppose that $a,b\in \cA$. We discuss the following sub-cases:

\begin{itemize}
    \item If $c,d,e,f\notin \cA$, then $g$ is a vertex in $\cA$, otherwise we have a contradiction with (1)-(b) of Definition \ref{Defn: aadmissible set}. Hence $[h,a]$ and $[c,b]$ are not in $\cC_{\cA}$ instead $[c,f]\in \cC_{\cA}$. Therefore, $\cC_{\cA}$ is a polyocollection which is not a collection of cells.
    
    \item  We show firstly that $c$ cannot be in $\cA$. If $c\in \cA$, then $d\notin \cA$, otherwise we have a contradiction with (1)-(b) of Definition \ref{Defn: aadmissible set}, and, moreover, $e\in \cA$ necessarily. Since $e\in \cA$ and $f\notin \cA$, then $e+(0,1)\in \cA$. Continuing this argument for $e+(0,2)$, $e+(0,3)$ and so on, we can have two possibilities.
    \begin{itemize}
        \item Suppose $[e,(m-1,n)] \subset \cA$. We can assign the $2$-minor of $[h,a]$ to $c$,  that one of $[c,b]$ to $a$, that one of $[c,f]$ to $b$ and that one of every cell in $\cP_{[a,(m,n)]}$ to the lower left corner of the attached cell. Since $(m-1,n)$ is in $\cA$, then it is clear that $\vert \cA\vert+\vert F_{\cA}\vert \geq \vert \cP\vert +1$, which is a contradiction with (2) of Definition \ref{Defn: aadmissible set}. For instance, look at the case dealing for $\cA_4$ in Example \ref{Exa: admissible sets}.
        \item Suppose there exists a vertex $v\in [e,(m-1,n-1)]$ such that $[e,v]\cup [v+(1,0),(m,n)]$ is in $\cA$. Then we get the same previous contradiction by similar arguments.
    \end{itemize}
    Hence $c$ cannot be in $\cA$. Therefore, $g\in \cA$ otherwise we have a contradiction with (1)-(b) of Definition \ref{Defn: aadmissible set}. Hence $[h,a]$ and $[c,b]$ are not in $\cC_{\cA}$. 
    \begin{itemize}
        \item Now, if $e,f\notin \cA$, then $d\notin \cA$, so $[c,f]\in \cC_{\cA}$. Therefore, $\cC_{\cA}$ is a polyocollection which is not a collection of cells.
        \item If either $e,f\in \cA$ or $e\notin\cA$ and $f\in\cA$ (which implies $d\in \cA$), then $[c,f]\notin \cC_{\cA}$, and if this holds even in the other three corners of $\cP$, then $\cC_{\cA}$ is in particular a collection of cells.  
    \end{itemize}
\end{itemize} 
   \noindent Case 2) When at least one of $a$ and $b$ is not in $\cA$, then $\cP$ is a collection of cells, since $[c,b]\notin \cC_{\cA}$ and $[c,f] \in \cC_{\cA}$ cannot be possible.

Now, we discuss some algebraic properties of $I_{\cC}$, which will be useful in what follows. In particular, we show that $I_{\cC_{\cA}}$ is a prime ideal with $\mathrm{ht}(I_{\cC_{\cA}})=\vert F_{\cA}\vert.$ We need to distinguish two cases.\\
1) If $\cC_{\cA}$ is a collection of cells, in particular from the definition of $\cC_{\cA}$ we have that $\cC_{\cA}$ is either a simple polyomino or a disjoint union of simple polyominoes, so $I_{\cC_{\cA}}$ is a prime ideal from \cite[Corollary 2.2]{Simple equivalent balanced} or \cite[Corollary 2.3]{Simple are prime}. Moreover, from \cite[Corollary 2.3]{def balanced} and \cite[Corollary 2.2]{Simple equivalent balanced} it follows that $\mathrm{ht}(I_{\cC_{\cA}})=\vert \cC_{\cA}\vert$. Since there is a natural one-to-one correspondence between the cells of $\cC_{\cA}$ and the generators of $F_{\cA}$, we have $\vert \cC_{\cA}\vert=\vert F_{\cA}\vert$, so $\mathrm{ht}(I_{\cC_{\cA}})=\vert F_{\cA}\vert.$ \\
2) If $\cC$ is a polyocollection but not a collection of cells, that is in the cases $(1)-(a)$ and $(1)-(b)-(i)$, then $I_{\cC_{\cA}}$ can be identified with the inner $2$-minors ideal attached to the collection of cells $\cP'= \big(\cC_{\cA}\setminus\{[c,f]\}\big) \cup \{[a,f]\}$. We are replacing just an interval with a cell in $\cC_{\cA}$, so $\cP'$ is a simple polyomino or a disjoint union of simple polyominoes, so $I_{\cC_{\cA}}$ is a prime ideal $\mathrm{ht}(I_{\cC_{\cA}})=\vert F_{\cA}\vert$ by the same arguments done before.  
\end{discussion}

\noindent From Discussion~\ref{Disc: for an admissible set there is a polyocollection} we get the following result.

\begin{lemma}\label{Lemma: from an admissible set we have a polyocollection}
    Let $\cP$ be a circle closed path polyomino, $J(\cP)$ be the ideal as in Definition \ref{Defn: Ideals K and J for the canonical module} and $\cA$ be an admissible set for $J(\cP)$. Let $F_{\cA}$ be the set of the generators $f$ of $J(\cP)$ such that $V(f)\cap \cA=\emptyset$. Then
    $$\cC_{\cA}=\{I\ \text{is an interval of}\ \Z^2:V(I)=V(f)\ \text{for some}\ f\in F_{\cA}\}$$
    is a polyocollection $\cC_{\cA}$ such that  $I_{\cC_{\cA}}$ is a prime ideal and $\mathrm{ht}(I_{\cC_{\cA}})=\vert F_{\cA}\vert.$
\end{lemma}

\noindent Now, if $\cA\subset V(\cP)$ is an admissible set for $J(\cP)$, then we set $X(\cA)=\{x_a:a\in\cA\}$.

\begin{lemma}\label{Lemma: from admissible set to a minimal prime}
    Let $\cP$ be a circle closed path polyomino and $J(\cP)$ be the ideal as in Definition \ref{Defn: Ideals K and J for the canonical module}. Let $\cA$ be an admissible set for $J(\cP)$ and $\cC_{\cA}$ be the polyocollection defined in Lemma \ref{Lemma: from an admissible set we have a polyocollection}. Consider the ideal $\mathfrak{p}_{\cA}=I_{\cC_\cA}+(X(\cA))$ in $S_\cP$. Then $\mathfrak{p}_{\cA}$ is a minimal prime ideal of $J(\cP)$.
\end{lemma} 

\begin{proof}
    We start by showing that $J(\cP)$ is contained in $\mathfrak{p}_{\cA}$. Let $f$ be a generator of $J(\cP).$ If $f$ is in $F_{\cA}$, then $f$ is a generator of $I_{\cC_{\cA}}$. By contradiction, if we assume that $f\notin F_{\cA}$, then $V(f)\cap \cA$ contains at least an edge of $f$, which means that a diagonal corner and an anti-diagonal one of the interval given by $V(f)$ are in $\cA$, so $f$ belongs to $(X(\cA)) $. Hence, $J(\cP)$ is contained in $\mathfrak{p}_{\cA}$. \\
    Let us consider the ideal $(X(\cA))$. It is generated by $\vert \cA\vert$ variables, so it is a prime ideal with $\mathrm{ht}(X(\cA))=\vert\cA\vert.$  By Lemma~\ref{Lemma: from an admissible set we have a polyocollection}, we know that $\cC_{\cA}$ is a polyocollection such that $I_{\cC_{\cA}}$ is a prime ideal and $\mathrm{ht}(I_{\cC_{\cA}})=\vert F_{\cA}\vert.$ Now, we observe that for any generator $f$ of $I_{\cC_{\cA}}$ and for any $a\in\cA$, we have $\mathrm{supp}(f)\cap \mathrm{supp}(x_a)=\emptyset$. Hence $\mathfrak{p}_{\cA}$ is a prime ideal with $\mathrm{ht}(\mathfrak{p}_{\cA})=\vert F_{\cA}\vert+\vert \cA\vert,$ which is $\vert\cP\vert$ from Definition \ref{Defn: aadmissible set}, that is, the height of $J(\cP)$ by Lemma~\ref{Lemma: property of J(P) compared to I_P}. Then, $\mathfrak{p}_{\cA}$ is a minimal prime of $J(\cP).$
\end{proof}

\begin{lemma}\label{Lemma: from a minimal prime to admissible set}
    Let $\cP$ be a circle closed path polyomino and $J(\cP)$ be the ideal as in Definition \ref{Defn: Ideals K and J for the canonical module}. Let $\mathfrak{p}$ be a minimal prime of $J(\cP)$. Then there exists an admissible set $\cA$ for $J(\cP)$ such that $\mathfrak{p}=\mathfrak{p}_{\cA}$.
\end{lemma} 

\begin{proof}
     Let $\cA=\{a\in V(\cP):x_a\in \mathfrak{p}\}$. We want to prove that $\cA$ is an admissible set of $J(\cP)$ such that $\mathfrak{p}=\mathfrak{p}_{\cA}$. If $\cA=\emptyset$, then $\cA$ is an admissible set for $J(\cP)$. Moreover, in order to prove that $\mathfrak{p}=\mathfrak{p}_\emptyset:=I_{\cP}$, we first show that $I_{\cP}\subset \mathfrak{p}$. By being the structure of $\cP$ and the definition of $J(\cP)$ and $I_{\cP}$, it is enough to prove that $x_cx_f-x_ex_b$ and $x_ax_f-x_dx_e$, as in Figure \ref{Figure: tetromino for primality - V(f) cap A} (A) and (B) respectively, belong to $\mathfrak{p}$. 

      \begin{figure}[h]
		\centering
		\subfloat[]{\includegraphics[scale=0.6]{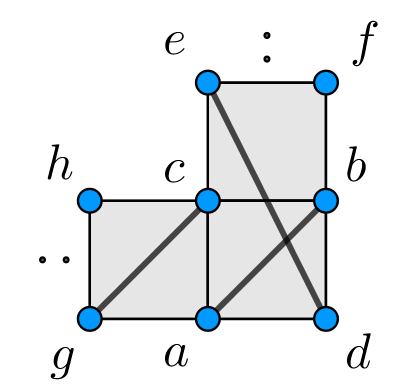}}\qquad\qquad\qquad
		\subfloat[]{\includegraphics[scale=0.6]{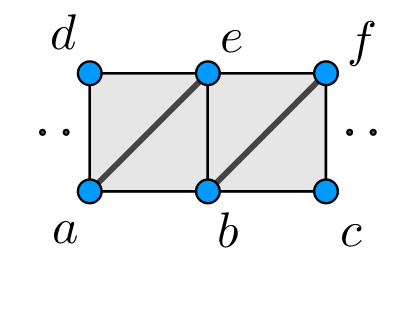}}
		\caption{Two configurations of cells in a circle closed path.}
		\label{Figure: tetromino for primality - V(f) cap A}
	\end{figure}

\noindent  Observe that $x_a(x_cx_f-x_ex_b)\in \mathfrak{p}$ because $x_a(x_cx_f-x_ex_b)=x_c(x_ax_f-x_ex_b)-x_e(x_ax_d-x_cx_b)$ and $x_ax_f-x_ex_b,x_ax_d-x_cx_b\in J(\cP)\subset  \mathfrak{p}$. So, $x_cx_f-x_ex_b\in \mathfrak{p}$ because $\mathfrak{p}$ is prime and $x_a\notin \mathfrak{p}$. Similarly, $x_e(x_ax_f-x_dx_c)=x_f(x_ax_e-x_dx_b)+x_d(x_bx_f-x_ex_c)\in \mathfrak{p}$, so $x_ax_f-x_dx_c\in \mathfrak{p}.$ Therefore $I_{\cP}\subset \mathfrak{p}$. Observe that $I_{\cP}$ is a prime ideal by \cite[Corollary 4.3]{Cisto_Navarra_closed_path} and $J(\cP)\subset I_{\cP}\subset \mathfrak{p}$ so $I_{\cP}= \mathfrak{p}$ from the minimality of $\mathfrak{p}$.\\
    Now, assume that $\cA\neq \emptyset$. Let us show that $\cA$ is an admissible set for $J(\cP).$ Suppose by contradiction that $\cA$ is not an admissible set for $J(\cP)$, so at least one of the conditions in Definition \ref{Defn: aadmissible set} does not hold. We need to analyze some cases.
     \begin{itemize}
         \item[Case 1)] Suppose that there exists a generator $f=x_px_q-x_rx_s$ of $J(\cP)$, where $p,q$ and $r,s$ are diagonal and anti-diagonal corners of $f$ respectively, such that $V(f)\cap \cA$ does not contains an edge of $f$ or $V(f)\cap \cA=V(f).$ 
         \begin{enumerate}
             \item Assume that $V(f)\cap \cA$ does not contains an edge of $f$, so either $\vert V(f)\cap \cA\vert =1$ or $V(f)\cap \cA$ contains $p,q$ (or $r,s$). If $\vert V(f)\cap \cA\vert =1$, then we may assume that $V(f)\cap \cA=\{p\}$, so $x_rx_s\in \mathfrak{p}$ since $f,x_p\in\mathfrak{p}$. Hence $x_r\in \mathfrak{p}$ or $x_s\in \mathfrak{p}$ because of the primality of $\mathfrak{p}$, so $V(f)\cap \cA$ contains an edge of $f$, which is a contradiction. The same argument holds if $V(f)\cap \cA$ contains $p,q$ (or $r,s$).
             \item Assume that $V(f)\cap \cA=V(f).$ With reference to Figure~\ref{Figure: tetromino for primality - V(f) cap A} (A), set $p=a$ and $q=b$, so $r=c$ and $s=d$.  Let $F_{\cA}$ be the set of the generators $f$ of $J(\cP)$ such that $V(f)\cap \cA=\emptyset$ and $\cC_{\cA}$ be the polyocollection defined by the intervals $I$ of $\Z^2$ such that $V(I)=V(f)$, for some $f\in F_{\cA}$. Denote $X_{b}=\{x_v:v\in \cA\setminus\{b\}\}$. Consider $\mathfrak{p}':=I_{\cC_{\cA}}+(X_{b})$. It is easy to see, as in Lemma \ref{Lemma: from admissible set to a minimal prime}, that $J(\cP)\subset \mathfrak{p}'$ and $\mathfrak{p}'$ is prime. Moreover, as done before, it can be shown that $I_{\cC_{\cA}}\subset \mathfrak{p}$, so $\mathfrak{p}'\subset \mathfrak{p}$. From the minimality of $\mathfrak{p}$, we have $\mathfrak{p}'= \mathfrak{p}$, but this is a contradiction since $x_{b}\in \mathfrak{p}$ but $x_b\notin \mathfrak{p}'$. Similar arguments can be made for the case $p=a$ and $q=f$, referring to Figure \ref{Figure: tetromino for primality - V(f) cap A} (A). Now, we refer to Figure \ref{Figure: tetromino for primality - V(f) cap A} (B) and assume that $p=a$ and $q=e$, so $r=d$ and $s=b$. In such a case, consider $F_{\cA\setminus\{e,b\}}$ as the set of the generators $f$ of $J(\cP)$ such that $V(f)\cap (\cA\setminus \{e,b\})=\emptyset$, $\cC_{\cA}$ as the polyocollection defined by the intervals $I$ of $\Z^2$ such that $V(I)=V(f)$, for some $f\in F_{\cA\setminus\{e,b\}}$, $X_{e,b}:=\{x_v:v\in \cA\setminus\{e,b\}\}$ and $\mathfrak{p}':=I_{\cC_{\cA}}+(X_{e,b})$. Arguing as done before we get $\mathfrak{p}=\mathfrak{p}'$, so a contradiction arises also in this case.
         \end{enumerate}

         \item[Case 2)] Here we discuss the case when $\vert F_{\cA}\vert +\vert\cA\vert \neq \vert \cP\vert$, where $F_{\cA}$ is the set of the generators $f$ of $J(\cP)$ such that $V(f)\cap \cA=\emptyset$. First of all, we observe that $b$ in Figure \ref{Figure: tetromino for primality - V(f) cap A} (A) is the diagonal corner of the only binomial $x_ax_b-x_cx_d$ and $d$ is the anti-diagonal corner of the binomials $x_ax_b-x_cx_d$ and $x_ax_f-x_ex_d$; the other vertices in Figures \ref{Figure: tetromino for primality - V(f) cap A} (A), except $b$ and $d$, and (B) at the same time diagonal and anti-diagonal corner of two different generators of $J(\cP)$, for instance, $a$ is diagonal corner of $x_ax_b-x_cx_d$ and anti-diagonal of $x_gx_c-x_hx_a$. \\
         Now, suppose that $\vert F_{\cA}\vert +\vert\cA\vert \neq \vert \cP\vert.$  Denote by $\cG(J(\cP))$ the set of generators of $J(\cP)$ and recall that $\vert J(\cP)\vert =\vert \cP\vert$. This means that we are assuming that $\vert\cA\vert \neq \vert \cG(J(\cP))\setminus F_{\cA}\vert$. Consider $\vert\cA\vert < \vert \cG(J(\cP))\setminus F_{\cA}\vert$.
         Hence, $\vert\cA\vert < \vert \cG(J(\cP))\setminus F_{\cA}\vert$ implies that there exist two binomials $F$ and $G$ in $\cG(J(\cP))\setminus F_{\cA}$ such that $V(F)\cap \cA$ and $V(G)\cap \cA$ consist of the same vertex, which we may assume that is a diagonal corner, and the anti-diagonal corners of $F$ and $G$ are not in $\cA$. With reference to Figure \ref{Figure: tetromino for primality - V(f) cap A} (B), for instance, it means that if $F=x_ax_e-x_dx_b$, $G=x_bx_f-x_ex_c$ and $b\in \cA$, then $a,e,c\notin \cA$. But this leads a contradiction since $F, x_b\in \mathfrak{p}$ so $x_a\in \mathfrak{p}$ or $x_e\in \mathfrak{p}$ for the primality of $\mathfrak{p}$. All the other possible situations of $F$ and $G$ coming from Figure \ref{Figure: tetromino for primality - V(f) cap A} (A) give us a similar contradiction.\\ 
         Now, assume that $\vert\cA\vert > \vert \cG(J(\cP))\setminus F_{\cA}\vert$. Recall also that $\vert \cP\vert=\frac{\vert V(\cP)\vert}{2}$. Since $\vert\cA\vert > \vert \cG(J(\cP))\setminus F_{\cA}\vert$ then every segment in Figures \ref{Figure: tetromino for primality - V(f) cap A} (A) and (B) has a point in $\cA$, and there exists a segment having two vertices in $\cA$. This leads to a contradiction. In fact, with reference to Figure \ref{Figure: tetromino for primality - V(f) cap A} (A), we suppose that $b,c,d,e \in \cA$. Observe that $a\notin \cA$, otherwise we have $V(x_ax_b-x_cx_d)\cap \cA=V(x_ax_b-x_cx_d)$ and we have a contradiction as in Case 1)-(2). If $f\notin \cA$, then the contradiction arises as $x_a\in \mathfrak{p}$ or $x_f\in\mathfrak{p}.$ So, $f\in \cA$. But, in such a case, consider $F_{\cA\setminus\{d\}}$ as the set of the generators $f$ of $J(\cP)$ such that $V(f)\cap (\cA\setminus \{d\})=\emptyset$, $\cC_{\cA}$ as the polyocollection defined by the intervals $I$ of $\Z^2$ such that $V(I)=V(f)$, for some $f\in F_{\cA\setminus\{d\}}$, $X_{d}:=\{x_v:v\in \cA\setminus\{d\}\}$ and $\mathfrak{p}':=I_{\cC_{\cA}}+(X_{d})$. Arguing as in Case 1)-(2) we get $\mathfrak{p}=\mathfrak{p}'$, which leads to a contradiction. All other cases, coming by taking other different points, give us a contradiction as before.  Moreover, referring to Figure \ref{Figure: tetromino for primality - V(f) cap A} (B), we may suppose that $b,d,e,f \in \cA$. This case leads to a contradiction arguing as done before, considering $F_{\cA\setminus\{b\}}$ as the set of the generators $f$ of $J(\cP)$ such that $V(f)\cap (\cA\setminus \{b\})=\emptyset$, $\cC_{\cA}$ as the polyocollection defined by the intervals $I$ of $\Z^2$ such that $V(I)=V(f)$, for some $f\in F_{\cA\setminus\{b\}}$, $X_{b}:=\{x_v:v\in \cA\setminus\{b\}\}$ and $\mathfrak{p}':=I_{\cC_{\cA}}+(X_{b})$. \\
         We have a contradiction when $\vert F_{\cA}\vert +\vert\cA\vert\neq \vert \cP\vert$, so we get the claim: $\vert F_{\cA}\vert +\vert\cA\vert=\vert \cP\vert$.
     \end{itemize}
        Therefore, we have proved that $\cA$ is an admissible set of $J(\cP).$ Now, we need to show only that $\mathfrak{p}=\mathfrak{p}_{\cA}$. 
        Consider the prime ideal $\mathfrak{p}_{\cA}=I_{\cC_{\cA}}+(X(\cA))$. As done in the first part of the proof, where we proved that $\mathfrak{p}=I_{\cP}$, we have that $\mathfrak{p}_\cA\subset \mathfrak{p}$. Then, by the minimality of $\mathfrak{p}$, we have $\mathfrak{p}=\mathfrak{p}_\cA$.
\end{proof}

\begin{thm}\label{Thm: caract. minimal prime J(P)}
   Let $\cP$ be a circle closed path polyomino and $J(\cP)$ be the ideal as in Definition \ref{Defn: Ideals K and J for the canonical module}. An ideal $\mathfrak{p}$ is a minimal prime of $J(\cP)$ if and only if $\mathfrak{p}=I_{\cC_\cA}+(X(\cA))$, where $\cA$ is an admissible set for $J(\cP)$ and $\cC_{\cA}$ is the polyocollection defined in Lemma \ref{Lemma: from an admissible set we have a polyocollection}. In particular, $J(\cP)$ is an unmixed ideal. 
\end{thm}

\begin{proof}
    It follows from Lemma \ref{Lemma: from admissible set to a minimal prime} and \ref{Lemma: from a minimal prime to admissible set}. Moreover, all the minimal primes of $J(\cP)$ have the same height, hence $J(\cP)$ is unmixed. 
\end{proof}

 \begin{figure}[h]
		\centering
		\subfloat[]{\includegraphics[scale=0.7]{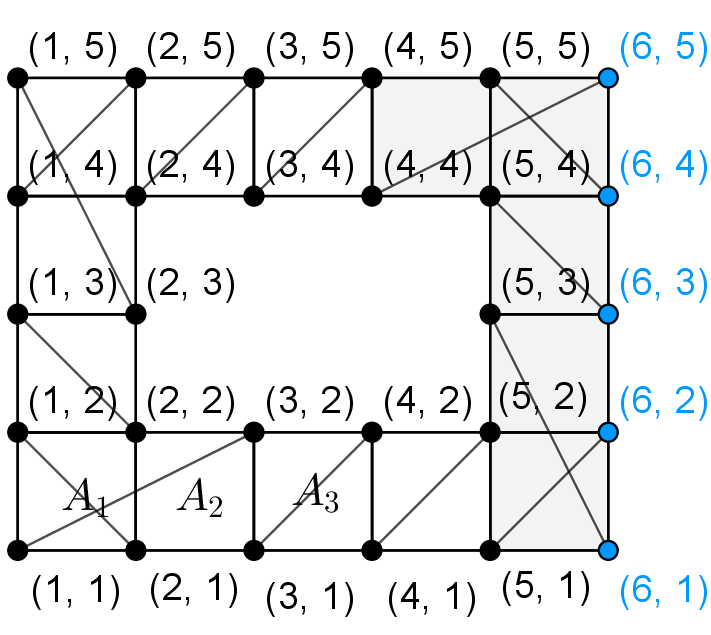}}\quad
		\subfloat[]{\includegraphics[scale=0.7]{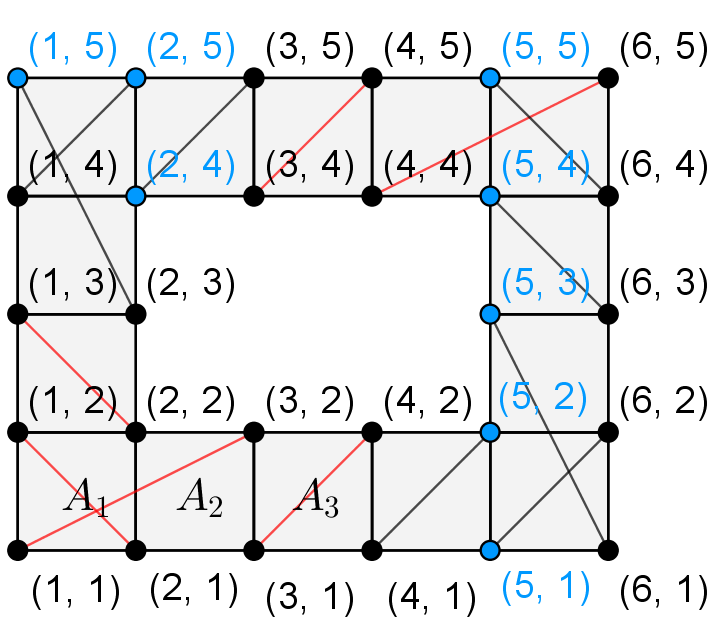}}\quad
		\subfloat[]{\includegraphics[scale=0.7]{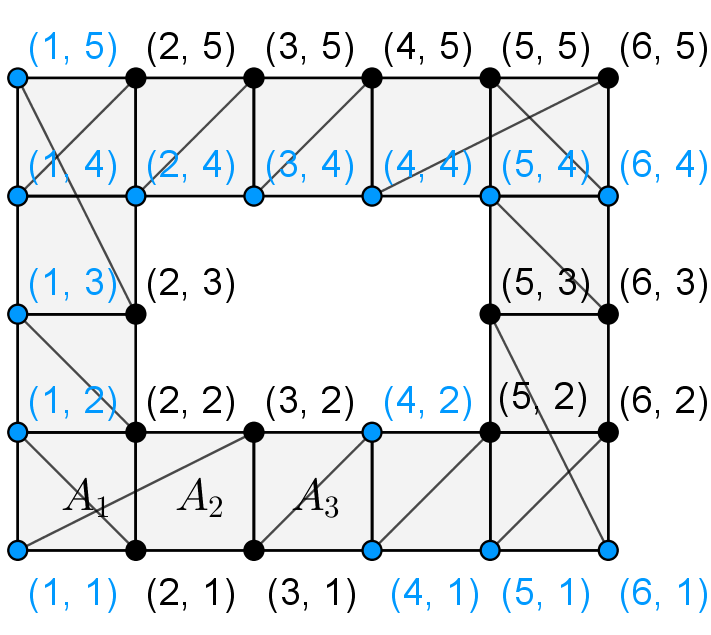}}
		\caption{Representations of three minimal primes in a circle closed path.}
		\label{Figure: example minimial primes}
	\end{figure}

\begin{exa}\rm\label{exa: minimalprimes}
Consider the circle closed path $\cP$ as in Figure \ref{Figure: example minimial primes}. Using \texttt{Macaulay2} and the package \texttt{Binomials} (see \cite{Kahle}), we compute all the minimal primes of $J(\cP)$, which are $1448$. Here, we figure out just some of them.

\begin{enumerate}
    \item $\mathfrak{p}_1=I_{\cC_1}+M_1$, where $M_1=(x_{(6,5)},\,x_{(6,4)},\,x_{(6,3)},\,x_{(6,2)},\,x_{(6,1)})$ and $I_{\cC_1}$ is the ideal attached to the collection of white cells in Figure \ref{Figure: example minimial primes} (A);
    \item $\mathfrak{p}_2=I_{\cC_2}+M_2$, where $M_2=(x_{(1,5)},\,x_{(2,5)},\,x_{(5,5)},\,x_{(2,4)},\,x_{(5,4)},x_{(5,3)},\,x_{(5,2)},\,x_{(5,1)})$ and $I_{\cC_2}$ is the ideal attached to the polyocollection $$\cC_2=\big\{[(1,1),(2,2)],[(1,2),(2,3)],[(2,1),(3,2)],[(3,1),(4,2)],[(3,4),(4,5)],[(4,4),(6,5)]\big\}.$$ Look at Figure \ref{Figure: example minimial primes} (B);
    \item $\mathfrak{p}_3=(x_{(1,5)},\,x_{(1,4)},\,x_{(1,3)},\,x_{(1,2)},\,x_{(1,1)},x_{(2,4)},\,x_{(3,4)},\,x_{(4,4)},\,x_{(4,2)},\,x_{(4,1)},\,x_{(5,4)},\,x_{(5,1)},\,x_{(6,4)},\,x_{(6,1)})$. Look at Figure \ref{Figure: example minimial primes} (C).
\end{enumerate}
\end{exa}

\noindent Now, we are ready to prove Theorem \ref{Thm: description canonical module}.

\begin{proof}[Proof of Theorem \ref{Thm: description canonical module}]
 Let $\cP$ be a circle closed path polyomino and $J(\cP)$ and $K(\cP)$ be the ideals as in Definition \ref{Defn: Ideals K and J for the canonical module}. We want to show that $J(\cP) : I_{\cP} = J(\cP)+K(\cP)$. We denote
 $$T(\cP):=\bigcap_{\cA \text{ admissible set} \atop \text{ of } J(\cP) \text{ with } \cA\neq \emptyset}\Big(I_{\cC_{\cA}}+(X(\cA))\Big).$$
  By Proposition \ref{Prop: J:I = intersection minimal primes}, Lemma \ref{Lemma: property of J(P) compared to I_P} and Theorem \ref{Thm: caract. minimal prime J(P)} we have $J(\cP):I_{\cP}=T(\cP)$. We prove that $J(\cP)+K(\cP)=T(\cP)$.\\
 $\subseteq)$ Let $f$ be a generator of $J(\cP)+K(\cP)$. Firstly, consider that $f$ is a generator of $J(\cP)$, as $x_ax_b-x_cx_d$, and we show that $f\in I_{\cC_{\cA}}+(X(\cA))$ for all admissible set $\cA\neq \emptyset$ of $J(\cP)$. Let $\cA$ be an admissible set of $J(\cP)$, different from the empty set. If $V(f)\cap \cA=\emptyset$, then $f$ is a generator of $I_{\cC_{\cA}}$. Assume that $V(f)\cap \cA=\emptyset$ contains an edge of $f$, so we may consider that $\{a,c\}\in V(f)\cap \cA$. This implies that $f\in (X(\cA))$, so $f\in I_{\cC_{\cA}}+(X(\cA))$. Now, suppose that $f$ is a monomial generator of $K(\cP)$. Using notations from Definition~\ref{Defn: Ideals K and J for the canonical module}, we have that $f=x_{v_1}x_{v_2}\dots x_{v_t}x_{(2,2)}x_{(1,n)}x_{(m,1)}x_{(m-1,n-1)}$, for some $(v_1, v_2, \dots, v_t)\in \cV$. It is enough to show that for any admissible set $\cA\neq \emptyset$, there exists a vertex $w\in \{v_i:i\in[t]\}\cup\{(2,2), (1,n), (m,1), (m-1,n-1)\}$ such that $w\in \cA$, so that $f\in (X(\cA))$. Let us suppose by contradiction that there exists an admissible set $\cA\neq \emptyset$ such that every vertex in $\{v_i:i\in[t]\}\cup\{(2,2), (1,n), (m,1), (m-1,n-1)\}$ does not belong to $\cA$. Without loss of generality, we may consider only the part of $\cP$ as in Figure \ref{Figure: piece of circle closed path}, where $a=(m-1,2)$ and $b=(m,2)$, since $\cP$ consists only suitable rotations of this type of configuration. With reference to Figure \ref{Figure: piece of circle closed path} we have that $d,a'\notin \cA$. Consider $a_1$ and $b_1$. Using our assumption, we have $a_1 \notin\cA$ or $b_1\notin \cA$. If $a_1 \notin\cA$ and $b_1\in \cA$, then we obtain a contradiction, because $a_1,d\notin \cA$, hence $\cA$ cannot be an admissible set for $J(\cP)$, because the condition (1) of Definition \ref{Defn: aadmissible set} is not satisfied. If $a_1 \in\cA$ and $b_1\notin \cA$, then $a_2\in \cA$, because $\cA$ is an admissible set for $J(\cP)$, so $b_2$ cannot be in $\cA$ from our assumption. Continuing this arguments until $a_{n-4}$, we get that $a_{n-4}\in\cA$ and $b_{n-4}\notin\cA$. Since $\cA$ is an admissible set for $J(\cP)$ and $b_{n-4}\notin\cA$, then $a'\in \cA$ but this is a contradiction since $a'\notin\cA$. Hence $a_1,b_1$ cannot be in $\cA$. These arguments can be repeated for any $a_i$ and $b_i$, where $i=2,\dots,n-4$, getting that $a_i,b_i\notin \cA$ for all $i\in [n-4]$. Therefore, the only vertices in $\cA$ can be $a,c,b,c',b',d'$ and the analogous six vertices in the other two changes of direction. But it is impossible to make an admissible set with just these vertices, so $\cA$ cannot be an admissible set for $J(\cP)$, which is a contradiction. Hence, $f\in (X(\cA))$ and, in conclusion, $J(\cP)+K(\cP)\subseteq T(\cP)$.
        
        \begin{figure}[H]
		\centering
		\includegraphics[scale=0.6]{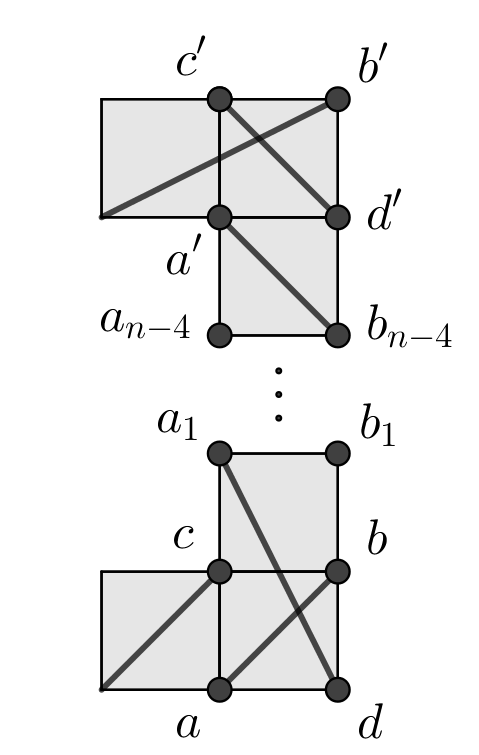}
		\caption{A part of a circle closed path.}
		\label{Figure: piece of circle closed path}
	\end{figure}
 
\noindent $\supseteq)$ Now, let $f\in T(\cP)$, so $f\in I_{\cC_{\cA}}+(X(\cA))$ for all admissible sets $\cA\neq \emptyset$. If $f\in J(\cP)$ then we have finished. Assume that $f\notin J(\cP)$ and we prove that $f\in K(\cP)$. Suppose that $f\notin K(\cP)$. Then it follows that there exists $m \in \mathrm{supp}(f)$ such that $m\notin K(\cP)$. We need to examine two cases.
\begin{enumerate}
    \item With reference to the notations in Definition \ref{Defn: aadmissible set}, suppose that there is a component $\cA_1=\{a,b\}$ of $\cV$ such that $x_{a}$ and $x_{b}$ do not divide $m$. It is not restrictive to assume that $\cA_1=\{a_1,b_1\}$, referring to Figure \ref{Figure: piece of circle closed path}, because the arguments are the same if $\cA_1$ is in $\{R_j:j=4,\dots,n-2\}$, $\{L_j:j=3,\dots,n-2\}$, $\{U_k:k=3,\dots,m-2\}$ or $\{D_k:j=3,\dots,m-2\}$. Observe that $\cA_1$ is an admissible set for $J(\cP)$ and $m\notin (X(\cA_1))$, so $f$ does not belong to $(X(\cA_1))$. Since $f\in T(\cP)$, then $f\in I_{\cC_{\cA_1}}$, where $\cC_{\cA_1}$ is the polyomino obtained by $\cP$ removing the cells having $\{a,b\}$ as common edge. Hence $f\in I_{\cP}$.
    \item The other case so that $m\notin K(\cP)$ is $x_d$ does not divide $m$ (or $a'$ does not divide $m$). Taking $\cA=\{a,c,d\}$ as an admissible set and using some arguments as done before, we get $f\in I_{\cP}$. 
\end{enumerate}
    Hence, we obtain that $f\in I_{\cP}$ in both cases. Since $f\in T(\cP)$, then it follows that $f \in T(\cP)\cap I_{\cP}$. Recall that $J(\cP)$ is a radical ideal from Lemma \ref{Lemma: property of J(P) compared to I_P} and, moreover, $I_\cP$ is the minimal prime of $J(\cP)$ coming from $\emptyset$ as an admissible set. By being $J(\cP)$ radical, we have that $J(\cP)$ is the intersection of all minimal prime ideals of $J(\cP)$, so $T(\cP)\cap I_{\cP}=J(\cP)$. Hence, we get that $f\in J(\cP)$, which is a contradiction. Therefore, $T(\cP)\subseteq J(\cP)+K(\cP)$. \\
    We have that $J(\cP):I_{\cP}=J(\cP)+K(\cP)$, and by Proposition~\ref{Prop: canonical module from linkage theory}, we obtain the desired conclusion, namely $$\omega_{K[\cP]}\cong \big(J(\cP)+K(\cP)\big)/J(\cP).$$
\end{proof}

\begin{exa}\rm\label{Exa: polyomino no good for c.m.}
Let $\cP$ be the closed path polyomino in Figure \ref{Figure: example no good polyomino for c.m.}. By using \texttt{Macaulay2}, we observe that $J(\cP):I_{\cP}=J(\cP)+K(\cP)$, where $K(\cP)$ is still a squarefree monomial ideal but it is not equigenerated, which means that it is not generated in a single degree. For instance, two generators of $K(\cP)$ in different degrees are $x_{(1,2)}x_{(1,3)}x_{(1,4)}x_{(2,6)}x_{(3,1)}x_{(3,6)}x_{(4,1)}x_{(4,6)}x_{(5,1)}x_{(6,3)}x_{(6,4)}x_{(6,5)}$ and $x_{(1,4)}x_{(2,2)}x_{(2,3)}x_{(3,2)}x_{(3,6)}x_{(4,3)}x_{(4,5)}x_{(5,1)}x_{(6,4)}$.

\begin{figure}[h!]
		\centering
		\includegraphics[scale=0.6]{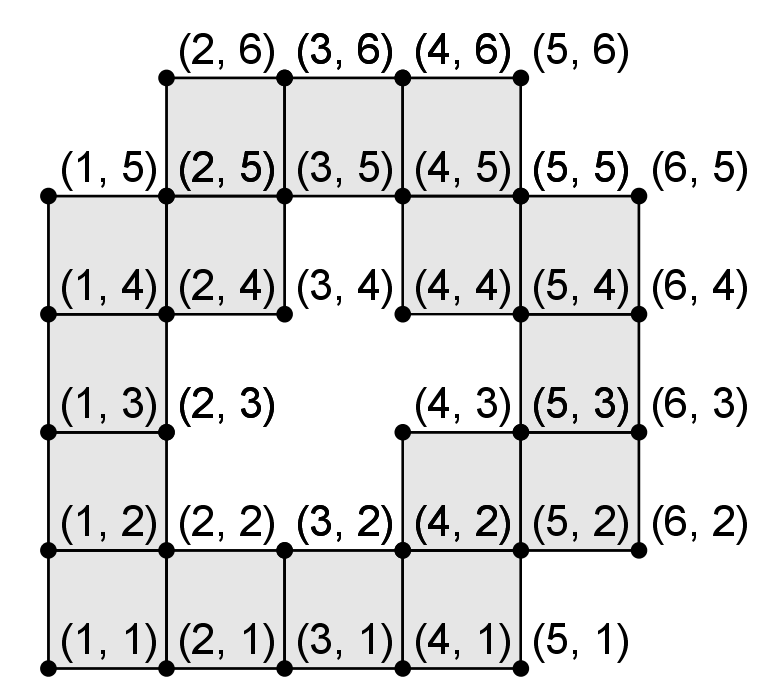}
		\caption{A non-level closed path.}
		\label{Figure: example no good polyomino for c.m.}
	\end{figure}

\end{exa}

\vspace{.1in}
\noindent \textbf{Question.} Is it possible to give a combinatorial interpretation of the canonical module of a closed path?\\

\noindent The description of the canonical module of $K[\cP]$ given in Theorem~\ref{Thm: description canonical module} allows us to compute the Cohen-Macaulay type of $K[\cP]$, i.e. the number of generators of the canonical module, where $\cP$ is a circle closed path polyomino. As a consequence, we show that $K[\cP]$ is a level ring, i.e. the generators of $\omega_{K[\cP]}$ are of the same degree.

\begin{coro}\rm\label{CMtype}
 Let $\cP$ be a circle closed path polyomino. Then $\mathrm{type}(K[\cP])=(m-3)^2(n-3)^2$ and $K[\cP]$ is a level ring.
\end{coro}

\begin{proof}
    If $m=n=4$, then $K[\cP]$ is Gorenstein, hence $\mathrm{type}(K[\cP])=1$. Assume that $m,n>4$. From \cite[Corollary 5.3.6]{Villareal}, the Cohen-Macaulay type of $K[\cP]$ is equal to the minimum number of the generators of the canonical module of $K[\cP]$. 
    The set $\cG=\{{\mathbf{x_v}}+J(\cP):\mathbf{v}\in \cV\}$ is a set of generators of $(J(\cP)+K(\cP))/J(\cP)$. We need to show that $\cG$ minimally generates $(J(\cP)+K(\cP))/J(\cP)$. Suppose by contradiction that $\cG$ is not a set of minimum number of generators of $(J(\cP)+K(\cP))/J(\cP)$, so there exists $\mathbf{w} \in\cV$ such that $\mathbf{x}_\mathbf{w}$ belongs to the ideal $J(\cP)+(\mathbf{x}_{\mathbf{v}}:\mathbf{v}\in \cV\setminus \{\mathbf{w}\})$. Set $J'=J(\cP)+(\mathbf{x}_{\mathbf{v}}:\mathbf{v}\in \cV\setminus \{\mathbf{w}\})$ and $\cG(J')=\{f:f \text{ generator of } J(\cP)\}\cup \{\mathbf{x}_{\mathbf{v}}:\mathbf{v}\in \cV\setminus \{\mathbf{w}\}\}$. Let $<$ be the lexicographic order on $S_{\cP}$ defined in Proposition \ref{Lemma: A closed path with a L-conf is Konig type}. 
    Denote by $S(f,g)$ the $S$-polynomial of two polynomials $f$ and $g$ of $S_{\cP}$. We note the following properties.
    
    \begin{enumerate}
        \item  For any two generators $f,g$ of $J(\cP)$, $S(f,g)$ reduces to $0$ since $\mathrm{gcd}(\mathrm{in}_<(f),\mathrm{in}_<(g))=1$.
        \item Trivially $S(u,u')=0$ for all monomial $u$ and $u'$ in $S_{\cP}$. 
        \item Consider a monomial $m$ in $S_{\cP}$ and suppose that there exists a generator $f$ of $J(\cP)$ such that $\mathrm{in}_<(f)$ divides $m$. Then $m=\mathrm{in}_<(f)u$, for a suitable monomial in $S_{\cP}$, and we may write $f$ as $\pm\mathrm{in}_<(f)\mp f'$. Hence by dividing $m$ with respect to $f$, we have $m=(\pm u)(\pm\mathrm{in}_<(f)\mp f')+uf'$. The latter means that $m$ reduces to $uf'$ with respect to $f$, where $f'$ is a monomial related to either diagonal or anti-diagonal corners of $V(f)$. 
        \item Consider a monomial $m$ in $S_{\cP}$ and assume that there exists a generator $f$ of $J(\cP)$ such that, if $\mathrm{in}_<(f)=x_ax_b$ ($a,b\in V(\cP)$), then $m=ux_a$ for a suitable monomial $u$ in $S_{\cP}$ and $x_b$ does not divide $u$. Then $S(m,f)=\pm uf'$, where $\deg(uf')>\deg(m)$, and it cannot be reduced to $0$ modulo $\cG(J')$ from the arguments used in $(3)$.
        \item If $m$ is a monomial in $S_{\cP}$ and suppose that there exists a generator $f$ of $J(\cP)$ such that $\mathrm{in}_<(f)$ divides $m$, then $S(m,f)=\pm uf'$, where $\deg(uf')=\deg(m)$, and it cannot be reduced to $0$ modulo $\cG(J')$ as well.
    \end{enumerate}

    \noindent Denote by $\mathfrak{G}$ the Gr\"obner basis of $J'$ with respect to $<$. From the properties described above, we can easily deduce that $\mathbf{x}_{\mathbf{w}}$ cannot be reduced to $0$ modulo $\mathfrak{G}$, which is a contradiction, since $\mathbf{x}_{\mathbf{v}}\in J'$. Therefore, $\cG$ is a minimal set of generators of $(J(\cP)+K(\cP))/J(\cP)$. By Remark~\ref{Rmk: number generators K(P)}, it follows that $\mathrm{type}(K[\cP])=(m-3)^2(n-3)^2$. Finally, since the monomials in $\cG$ have the same degree, $K[\cP]$ is a level ring. Similar arguments can be used in the cases $m=4,n>4$ and $m>4,n=4$. 
\end{proof}

\noindent \textbf{Question.} Is it possible to give an estimate, or an upper bound or a lower bound at least, for the Cohen-Macaulay type in terms of the combinatorial properties of a polyomino?\\

\noindent \textbf{Acknowledgement.} RD was supported by the Alexander von Humboldt Foundation and a grant of the Ministry of Research, Innovation and Digitization, CNCS - UEFISCDI, project number PN-III-P1-1.1-TE-2021-1633, within PNCDI III. FN was supported by Scientific and Technological Research Council of Turkey T\"UB\.{I}TAK under the Grant No: 122F128, and he is thankful to T\"UB\.{I}TAK for their supports. He is also enrolled in the group GNSAGA of INDAM and he acknowledges its support.

\vspace{.2in}

\end{document}